\newtheorem{theorem}{Theorem}
\newtheorem{corollary}[theorem]{Corollary}
\newtheorem{lemma}[theorem]{Lemma}
\newtheorem{proposition}[theorem]{Proposition}
\newtheorem{remark}[theorem]{Remark}
\DeclareMathOperator{\dom}{dom}
\DeclareMathOperator{\range}{range}
\DeclareMathOperator{\gph}{gph}
\DeclareMathOperator{\cl}{cl}
\newenvironment{proof}[1][Proof]{\noindent\textbf{#1.} }{\ \rule{0.5em}{0.5em}}
\begin{document}

\title{Feasibility problems via paramonotone operators in a convex setting%
\thanks{%
This research has been partially supported by Grant PGC2018-097960-B-C21
from MICINN, Spain, and ERDF, "A way to make Europe", European Union, and
Grant PROMETEO/2021/063 from Generalitat Valenciana, Spain. The third author
has also been partially supported by the Severo Ochoa Programme for Centres
of Excellence in R\&D [SEV-2015-0563]. He is affiliated with MOVE (Markets,
Organizations and Votes in Economics).}}
\author{J. Camacho \thanks{
Center of Operations Research, Miguel Hern\'{a}ndez University of Elche,
03202 Elche (Alicante), Spain (j.camacho@umh.es, canovas@umh.es,
parra@umh.es).} \and M.J. C\'{a}novas\footnotemark[2] \and J.E. Mart\'{\i}%
nez-Legaz \thanks{
Departament d'Economia i d'Hist\`{o}ria Econ\`{o}mica, Universitat Aut\`{o}%
noma de Barcelona, and BGSMath, Barcelona, Spain
(JuanEnrique.Martinez.Legaz@uab.cat).} \and J. Parra\footnotemark[2]}
\date{}
\maketitle

\begin{abstract}
This paper is focused on some properties of paramonotone operators on Banach
spaces and their application to certain feasibility problems for\textbf{\ }%
convex sets in a Hilbert space and convex systems in the Euclidean space. In
particular, it shows that operators that are simultaneously paramonotone and
bimonotone are constant on their domains, and this fact is applied to tackle
two particular situations. The first one, closely related to simultaneous
projections, deals with a finite amount of convex sets with an empty
intersection and tackles the problem of finding the smallest perturbations
(in the sense of translations) of these sets to reach a nonempty
intersection. The second is focused on the distance to feasibility;
specifically, given an inconsistent convex inequality system, our goal is to
compute/estimate the smallest right-hand side perturbations that reach
feasibility. We advance that this work derives lower and upper estimates of
such a distance, which become the exact value when confined to linear
systems.

\textbf{Key words.} Distance function, convex inequalities, distance to
feasibility, paramonotone operators, displacement mapping\newline

\bigskip

\noindent \textbf{Mathematics Subject Classification: }47N10, 47H05, 52A20,
90C31, 49K40
\end{abstract}

\section{Introduction}

The present paper is focused on paramonotone operators with applications to
certain feasibility problems for convex sets in a Hilbert space and convex
inequality systems in $\mathbb{R}^{n}.$ To start with, we recall some basic
properties of operators in Banach spaces. Let $X$ be a real Banach space,
with topological dual $X^{\ast },$ and denote by $\left\langle \cdot ,\cdot
\right\rangle $ the corresponding canonical pairing. A set-valued operator $%
T:X\rightrightarrows X^{\ast }$ is said to be \emph{monotone} if 
\begin{equation*}
\left\langle x-y,x^{\ast }-y^{\ast }\right\rangle \geq 0\text{ whenever }%
\left( x,x^{\ast }\right) ,\left( y,y^{\ast }\right) \in \mathrm{gph}T,
\end{equation*}%
where $\mathrm{gph}T:=\{\left( x,x^{\ast }\right) \in X\times X^{\ast
}:x^{\ast }\in T\left( x\right) \}$ is the graph of $T.$ In the case when
both $T$ and $-T$ are monotone, then $T$ is called \emph{bimonotone.} If $T$
is monotone and, in addition, $\mathrm{gph}T$ is maximal in the sense of
inclusion order, it is said to be \emph{maximally monotone. }A well-known
example of maximally monotone operator is the subdifferential operator of a
proper, lower semicontinuous (lsc, for short), convex function $%
f:X\rightarrow \left] -\infty ,+\infty \right] ,$ denoted by $\partial f$
(see Section 2 for details). Monotone operators are fundamental tools of
nonlinear analysis and optimization; see, e.g., the books \cite%
{BauschkeCombettes11, BorweinVanderwerff10, BI08, Phelps, RocKWet09,
Simons08, Zalinescu02}. A monotone operator $T$ is called \emph{paramonotone}
if the following implication holds:%
\begin{equation*}
\left. 
\begin{array}{c}
\left( x,x^{\ast }\right) ,\left( y,y^{\ast }\right) \in \mathrm{gph}T \\ 
\left\langle x-y,x^{\ast }-y^{\ast }\right\rangle =0%
\end{array}%
\right\} \Rightarrow \left( x,y^{\ast }\right) ,\left( y,x^{\ast }\right)
\in \mathrm{gph}T.
\end{equation*}%
The term paramonotonicity was introduced in \cite{CensorIusemZenios98}
(although the condition was previously presented in \cite{Bruck75} without a
name). The initial motivation for the introduction of paramonotone operators
comes from its crucial role regarding interior point methods for variational
inequalities (see again \cite{Bruck75} and \cite{CensorIusemZenios98}, and
also \cite{Iusem98}). Some important examples of paramonotone operators are
gathered in Section 2. At this moment, let us mention that subdifferentials
of proper lsc convex functions enjoy this property (see \cite[Proposition 2.2%
]{Iusem98} in the Euclidean space and \cite[Fact 3.1]{BausWangYao10} for its
extension to Banach spaces).

Looking at the applications of Sections 4 and 5, we are interested in
operators of the form $T_{1}\cap \left( -T_{2}\right) ,$ where $T_{1},$ $%
T_{2}:X\rightrightarrows X^{\ast }$ are paramonotone, which are also
paramonotone and, additionally, bimonotone; this fact entails that $%
T_{1}\cap \left( -T_{2}\right) $ is constant on its domain (as shown in
Corollary \ref{cor bimon param implies constant}); recall that the domain of
an operator $T$ is given by $\mathrm{dom}T:=\left\{ x\in X\mid T\left(
x\right) \neq \emptyset \right\} .$ Observe that 
\begin{equation}
\mathrm{dom}\left( T_{1}\cap \left( -T_{2}\right) \right) =\left\{ x\in
X\mid 0\in \left( T_{1}+T_{2}\right) \left( x\right) \right\} ,
\label{domT1y(-T2)}
\end{equation}%
which, in the particular case $T_{i}=\partial f_{i},$ $i=1,2,$ where the $%
f_{i}$'s are proper, lsc and convex, is known to coincide with 
\begin{equation}
\arg \min \left( f_{1}+f_{2}\right) ,  \label{eq_argmin_f1+f2}
\end{equation}%
i.e., with the set of (global) minima of $f_{1}+f_{2},$ provided that a
regularity condition ensuring $\partial f_{1}+\partial f_{2}=\partial \left(
f_{1}+f_{2}\right) $ is satisfied. These comments easily generalize to the
sum of a finite number of functions (see Section 3 for details) and are
applied to particular problems of the form%
\begin{equation}
\begin{array}{ll}
\underset{x\in X}{\text{minimize}}\text{ } & \sum\limits_{i=1}^{m}f_{i}%
\left( x\right) ,%
\end{array}
\label{eq_sum_fi}
\end{equation}%
where all $f_{i}$'s are proper lsc convex functions on $X.$

Now we present two applications discussed in the paper. The first one is
developed in a Hilbert space $X$ whose norm, associated with the
corresponding inner product, is denoted by $\left\Vert \cdot \right\Vert $.
It deals with a finite number of nonempty closed convex sets $%
S_{1},S_{2},...,S_{m}$ such that $\cap _{i=1}^{m}S_{i}=\emptyset $ and is
focused on the optimization problem given by 
\begin{equation}
\begin{array}{ll}
\underset{x\in X}{\text{minimize}}\text{ } & \sum\limits_{i=1}^{m}\alpha
_{i}d\left( x,S_{i}\right) ^{p},%
\end{array}
\label{eq_min_dist_k_weighted}
\end{equation}%
where $\alpha _{i}>0,$ $i=1,...,m;$ without loss of generality we assume $%
\sum\limits_{i=1}^{m}\alpha _{i}=1,$ $p\geq 1$ and $d\left( x,S_{i}\right) $
denotes the distance from point $x$ to set $S_{i},$ $i=1,...,m.$ The
following proposition establishes that (\ref{eq_min_dist_k_weighted}) is
equivalent to the problem:%
\begin{equation}
\begin{array}{ll}
\text{minimize } & \left\Vert u\right\Vert _{\alpha ,p} \\ 
\text{ subject to } & \cap _{i=1}^{m}\left( S_{i}+u_{i}\right) \neq
\emptyset , \\ 
& u=\left( u_{1},...,u_{m}\right) \in X^{m},%
\end{array}
\label{eq_problem_u_in_Xm}
\end{equation}%
where $\left\Vert u\right\Vert _{\alpha ,p}$ denotes the weighted $p$-norm
in space $X^{m}$ defined as 
\begin{equation}
\left\Vert u\right\Vert _{\alpha ,p}=\left( \sum\limits_{i=1}^{m}\alpha
_{i}\left\Vert u_{i}\right\Vert ^{p}\right) ^{1/p}.  \label{eq_norm in Xm}
\end{equation}%
This equivalence was already observed in \cite[Section 4]{BauCoLu04} for
Euclidean spaces;\ we include a proof for the sake of completeness.

\begin{proposition}
\label{equiv}A point $\overline{u}=\left( \overline{u}_{1},...,\overline{u}%
_{m}\right) \in X^{m}$ is an optimal solution to (\ref{eq_problem_u_in_Xm})
if and only if there exist an optimal solution $\overline{x}$ to (\ref%
{eq_min_dist_k_weighted}) such that $\overline{u}_{i}=\overline{x}%
-P_{i}\left( \overline{x}\right) ,$ $i=1,...,m$, with $P_{i}\left( \overline{%
x}\right) $ being the best approximation of $\overline{x}$ in $S_{i}.$
\end{proposition}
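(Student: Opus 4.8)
The plan is to exploit the elementary fact that feasibility of a translation vector $u=(u_{1},\ldots,u_{m})$ in (\ref{eq_problem_u_in_Xm}) is witnessed by a common point of the translated sets, and that such a witness bounds each $\left\Vert u_{i}\right\Vert $ from below by the corresponding distance. First I would note that, since $t\mapsto t^{1/p}$ is strictly increasing on $\left[ 0,+\infty \right) $, minimizing $\left\Vert u\right\Vert _{\alpha ,p}$ is the same as minimizing $\left\Vert u\right\Vert _{\alpha ,p}^{p}=\sum_{i}\alpha _{i}\left\Vert u_{i}\right\Vert ^{p}$, so I may work with the $p$-th power throughout. The crucial observation is that $\cap _{i=1}^{m}\left( S_{i}+u_{i}\right) \neq \emptyset $ holds precisely when there is a point $x\in X$ with $x-u_{i}\in S_{i}$ for every $i$; for any such witness $x$ one has $\left\Vert u_{i}\right\Vert =\left\Vert x-\left( x-u_{i}\right) \right\Vert \geq d\left( x,S_{i}\right) $, whence
\begin{equation*}
\left\Vert u\right\Vert _{\alpha ,p}^{p}=\sum_{i=1}^{m}\alpha _{i}\left\Vert u_{i}\right\Vert ^{p}\geq \sum_{i=1}^{m}\alpha _{i}d\left( x,S_{i}\right) ^{p}.
\end{equation*}

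Next I would show that the two problems share the same optimal value. The displayed inequality shows that every feasible $u$ in (\ref{eq_problem_u_in_Xm}), through any witness $x$, has objective value at least that of (\ref{eq_min_dist_k_weighted}) at $x$; taking infima gives one inequality between the optimal values. For the reverse, given an arbitrary $x\in X$ I would set $u_{i}:=x-P_{i}\left( x\right) $, where $P_{i}\left( x\right) $ is the unique best approximation of $x$ in the nonempty closed convex set $S_{i}$, which exists by the projection theorem in the Hilbert space $X$. Then $x-u_{i}=P_{i}\left( x\right) \in S_{i}$ for all $i$, so $x\in \cap _{i}\left( S_{i}+u_{i}\right) $ and $u$ is feasible, while $\left\Vert u_{i}\right\Vert =d\left( x,S_{i}\right) $ gives $\left\Vert u\right\Vert _{\alpha ,p}^{p}=\sum_{i}\alpha _{i}d\left( x,S_{i}\right) ^{p}$; taking infima yields the opposite inequality, so the optimal values coincide.

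With equal optimal values in hand, the equivalence follows. For the "if" direction, if $\overline{x}$ solves (\ref{eq_min_dist_k_weighted}) and $\overline{u}_{i}=\overline{x}-P_{i}\left( \overline{x}\right) $, the construction above makes $\overline{u}$ feasible with objective value equal to the common optimal value, hence optimal for (\ref{eq_problem_u_in_Xm}). For the "only if" direction, let $\overline{u}$ be optimal and pick a witness $\overline{x}\in \cap _{i}\left( S_{i}+\overline{u}_{i}\right) $. The displayed inequality then forces the chain
\begin{equation*}
v=\sum_{i=1}^{m}\alpha _{i}\left\Vert \overline{u}_{i}\right\Vert ^{p}\geq \sum_{i=1}^{m}\alpha _{i}d\left( \overline{x},S_{i}\right) ^{p}\geq v,
\end{equation*}
where $v$ denotes the common optimal value, so all inequalities are equalities. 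This gives at once that $\overline{x}$ solves (\ref{eq_min_dist_k_weighted}) and, since each $\alpha _{i}>0$ and $\left\Vert \overline{u}_{i}\right\Vert \geq d\left( \overline{x},S_{i}\right) $, that $\left\Vert \overline{u}_{i}\right\Vert =d\left( \overline{x},S_{i}\right) $ for every $i$.

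The crux, in my view, is converting these equalities into the stated identity $\overline{u}_{i}=\overline{x}-P_{i}\left( \overline{x}\right) $. Here I would invoke uniqueness of best approximations in a Hilbert space: the point $\overline{x}-\overline{u}_{i}$ lies in $S_{i}$ and realizes the distance $d\left( \overline{x},S_{i}\right) $, so it must coincide with $P_{i}\left( \overline{x}\right) $, giving $\overline{u}_{i}=\overline{x}-P_{i}\left( \overline{x}\right) $. This is the only step genuinely relying on the Hilbert-space structure rather than on a general normed space; where nearest points need not be unique, the translation $\overline{u}_{i}$ could fail to be recovered from a single projection, so existence and uniqueness of $P_{i}$ is precisely what makes the correspondence clean.
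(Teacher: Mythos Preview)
Your proof is correct and follows essentially the same approach as the paper: both exploit that a feasible $u$ admits a witness $x$ with $x-u_{i}\in S_{i}$ (hence $\left\Vert u_{i}\right\Vert \geq d\left( x,S_{i}\right) $), that conversely each $x$ produces the feasible $u_{i}:=x-P_{i}\left( x\right) $ with equality, and that uniqueness of projections in a Hilbert space recovers $\overline{u}_{i}=\overline{x}-P_{i}\left( \overline{x}\right) $ from the termwise equalities. The only difference is organizational: you first isolate the equality of optimal values and then deduce both implications from it, whereas the paper proves each implication directly, with the optimal-value equality implicit in the chains of inequalities.
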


\begin{proof}
Let $\overline{u}=\left( \overline{u}_{1},...,\overline{u}_{m}\right) \in
X^{m}$ be an optimal solution to (\ref{eq_problem_u_in_Xm}), and take $%
\overline{x}\in \cap _{i=1}^{m}\left( S_{i}+\overline{u}_{i}\right) .$ There
exist $s_{i}\in S_{i},$ $i=1,...,m,$ such that $\overline{x}=s_{i}+\overline{%
u}_{i}.$ For every $x\in X,$ we have%
\begin{eqnarray*}
\sum\limits_{i=1}^{m}\alpha _{i}d\left( \overline{x},S_{i}\right) ^{p} &\leq
&\sum\limits_{i=1}^{m}\alpha _{i}\left\Vert \overline{x}-s_{i}\right\Vert
^{p}=\sum\limits_{i=1}^{m}\alpha _{i}\left\Vert \overline{u}_{i}\right\Vert
^{p}\leq \sum\limits_{i=1}^{m}\alpha _{i}\left\Vert x-P_{i}\left( x\right)
\right\Vert ^{p} \\
&=&\sum\limits_{i=1}^{m}\alpha _{i}d\left( x,S_{i}\right) ^{p}.
\end{eqnarray*}%
To justify the latter inequality, observe that $\cap _{i=1}^{m}\left(
S_{i}+x-P_{i}\left( x\right) \right) \neq \emptyset ,$ because from the
equalities $x=P_{i}\left( x\right) +x-P_{i}\left( x\right) ,$ $i=1,...,m,$
it immediately follows that $x\in \cap _{i=1}^{m}\left( S_{i}+x-P_{i}\left(
x\right) \right) .$ Thus, $\overline{x}$ is an optimal solution to (\ref%
{eq_min_dist_k_weighted}). Furthermore, setting $x=\overline{x},$ we also
deduce that $d\left( \overline{x},S_{i}\right) =\left\Vert \overline{x}%
-s_{i}\right\Vert ,$ $i=1,...,m$, that is, $s_{i}=P_{i}\left( \overline{x}%
\right) ,$ so that $\overline{u}_{i}=\overline{x}_{i}-s_{i}=\overline{x}%
_{i}-P_{i}\left( \overline{x}\right) .$

Conversely, let $\overline{x}$ be an optimal solution to (\ref%
{eq_min_dist_k_weighted}), $u$ be a feasible solution to (\ref%
{eq_problem_u_in_Xm}), and take $x\in \cap _{i=1}^{m}\left(
S_{i}+u_{i}\right) .$ Then, there exist $s_{i}\in S_{i},$ $i=1,...,m,$ such
that $x=s_{i}+u_{i},$ and we have%
\begin{eqnarray*}
\left\Vert u\right\Vert _{\alpha ,p}^{p} &=&\sum\limits_{i=1}^{m}\alpha
_{i}\left\Vert u_{i}\right\Vert ^{p}=\sum\limits_{i=1}^{m}\alpha
_{i}\left\Vert x-s_{i}\right\Vert ^{p}\geq \sum\limits_{i=1}^{m}\alpha
_{i}d\left( x,S_{i}\right) ^{p}\geq \sum\limits_{i=1}^{m}\alpha _{i}d\left( 
\overline{x},S_{i}\right) ^{p} \\
&=&\sum\limits_{i=1}^{m}\alpha _{i}\left\Vert \overline{x}-P_{i}\left( 
\overline{x}\right) \right\Vert ^{p},
\end{eqnarray*}%
which shows that the point $\overline{u}=\left( \overline{u}_{1},...,%
\overline{u}_{m}\right) ,$ with $\overline{u}_{i}:=\overline{x}%
_{i}-P_{i}\left( \overline{x}\right) ,$ $i=1,...,m$, is an optimal solution
to (\ref{eq_problem_u_in_Xm}).\bigskip
\end{proof}

According to Proposition \ref{equiv}, problem (\ref{eq_min_dist_k_weighted})
is equivalent to that of finding the smallest translations of the sets $%
S_{i} $ that achieve a nonempty intersection.

The second application, developed in Section 5, deals with convex inequality
systems in\textbf{\ }$\mathbb{R}^{n}$ parameterized with respect to the
right-hand side (RHS, in brief)$,$ 
\begin{equation}
\sigma \left( b\right) :=\left\{ g_{i}(x)\leq b_{i},\;i=1,\ldots ,m\right\} ,
\label{eq_sigma_b}
\end{equation}%
where $x\in \mathbb{R}^{n}$ is the vector of decision variables and, for
each $i\in 1,\ldots ,m,$ $g_{i}:\mathbb{R}^{n}\rightarrow \mathbb{R}$ is a
(finite-valued) convex function on $\mathbb{R}^{n},$ and $%
(b_{i})_{i=1,\ldots ,m}\equiv b\in \mathbb{R}^{m}$. Taking a nominal $%
\overline{b}\in \mathbb{R}^{m}$ such that $\sigma \left( \overline{b}\right) 
$ is inconsistent (i.e., there is no $x\in \mathbb{R}^{n}$ satisfying all
inequalities of $\sigma \left( \overline{b}\right) ),$ our aim is to
estimate the distance in $\mathbb{R}^{m}$ endowed with any $p$-norm, with $%
p\geq 2,$ from $\overline{b}$ to the set of parameters $b$ such that $\sigma
\left( b\right) $ is consistent. This \emph{distance to feasibility }can be
computed by solving the following problem:%
\begin{equation}
\begin{array}{ll}
\underset{x\in X}{\text{minimize}}\text{ } & \sum\limits_{i=1}^{m}[g_{i}%
\left( x\right) -b_{i}]_{+}^{p},%
\end{array}
\label{eq_problem_sum_slacks_k}
\end{equation}%
which also adapts to the format of (\ref{eq_sum_fi}). Sharper results are
presented for linear systems when $p=2.$

At this point, we summarize the structure of the paper. Section 2 gathers
some background on convex sets, convex functions, and monotone operators,
which is appealed to in the remaining sections. Section 3 explores some new
properties of paramonotone operators and, in particular, analyzes the
simultaneous fulfilment of paramonotonicity and bimonotonicity. The problem
of simultaneous projections -see (\ref{eq_min_dist_k_weighted}) and (\ref%
{eq_problem_u_in_Xm})- is tackled in Section 4, while the distance to
feasibility of convex systems under RHS perturbations is dealt with in
Section 5.

\section{Preliminaries}

Let $X$ be a real Banach space and $f:X\rightarrow \left] -\infty ,+\infty %
\right] $ be a proper lsc convex function. We denote by $\mathrm{dom}%
f:=\{x\in X\mid f\left( x\right) <+\infty \}$ the domain of function $f.$
Recall that the subdifferential operator of $f,$ $\partial
f:X\rightrightarrows X^{\ast },$ assigns to each $x\in \mathrm{dom}f$ the
(possibly empty) set $\partial f\left( x\right) $ formed by all $x^{\ast
}\in X^{\ast }$ (called subgradients) such that%
\begin{equation*}
f\left( y\right) -f\left( x\right) \geq \left\langle y-x,x^{\ast
}\right\rangle ,\text{ for all }y\in X.
\end{equation*}%
When $x\notin \mathrm{dom}f$ we define $\partial f\left( x\right)
:=\emptyset ;$ in this way the domain of the set-valued mapping, $\mathrm{dom%
}\partial f,$ is always contained in $\mathrm{dom}f.$ Associated with $f,$
its Fenchel conjugative function $f^{\ast }:X^{\ast }\rightarrow \left]
-\infty ,+\infty \right] $ is given by 
\begin{equation*}
f^{\ast }\left( x^{\ast }\right) =\sup \left\{ \left\langle x,x^{\ast
}\right\rangle -f\left( x\right) \mid x\in X\right\} .
\end{equation*}%
Recall that the Young-Fenchel inequality writes as $f^{\ast }\left( x^{\ast
}\right) +f\left( x\right) \geq \left\langle x,x^{\ast }\right\rangle $ for
all $x\in X.$

For completeness, we gather in the following theorem some well-known results
about $\partial f$ and $f^{\ast }$ in Banach spaces used in the paper. They
can be traced out from different references dealing with convex analysis in
infinite dimensional spaces. Here, we mainly cite the books \cite%
{BorweinVanderwerff10, Lucchetti06, MN22, Zalinescu02}. From now on, $%
\mathrm{int}A$ denotes the interior of $A\subset X$ (where, as usual, $%
\subset $ is understood as $\subseteq )$ and the zero vector of $X^{\ast }$
is denoted by just $0.$

\begin{theorem}
\label{Theorem_properties_convexity}Let $f:X\rightarrow \left] -\infty
,+\infty \right] $ be a proper lsc convex function. Then we have:

$\left( i\right) $ \emph{\cite[Proposition 4.1.5]{BorweinVanderwerff10}} $f$
is continuous at $x$ if and only if $x\in \mathrm{int~dom}f\mathrm{;}$

$\left( ii\right) $ \emph{\cite[Theorem 3.2.15]{Lucchetti06}} $\mathrm{%
int~dom}f\subset \mathrm{dom}\partial f;$

$\left( iii\right) $ \emph{\cite[Proposition 3.2.17]{Lucchetti06}} $x\in
\arg \min f$ if and only if $0\in \partial f\left( x\right) ;$

$\left( iv\right) $ \emph{\cite[Exercise 4.2.15]{Lucchetti06}} $\partial f$
is maximally monotone;

$\left( v\right) $ \emph{\cite[Fact 3.1]{BausWangYao10} (see \cite[%
Proposition 2.2]{Iusem98} for finite dimensions)} $\partial f$ is
paramonotone;

$\left( vi\right) $ \emph{\cite[Proposition 5.31]{Lucchetti06}} For any $%
x\in X,$ we have the equivalence 
\begin{equation*}
x^{\ast }\in \partial f\left( x\right) \Leftrightarrow f^{\ast }\left(
x^{\ast }\right) +f\left( x\right) =\left\langle x,x^{\ast }\right\rangle ;
\end{equation*}%
(indeed, this statement does not require convexity);

$\left( vii\right) $ \emph{\cite[Theorem 3.4.2]{Lucchetti06} (see also \cite[%
Theorem 3]{Rock66}) }Let $g:X\rightarrow \left] -\infty ,+\infty \right] $
be any proper convex function. If $\left( \mathrm{int~dom}f\right) \cap 
\mathrm{dom}g\neq \emptyset ,$ then we have the subdifferential sum rule%
\begin{equation*}
\partial f\left( x\right) +\partial g\left( x\right) =\partial \left(
f+g\right) \left( x\right) ,\text{ whenever }x\in \mathrm{dom}\partial f\cap 
\mathrm{dom}\partial g,
\end{equation*}%
(indeed, `$\supset $' is the nontrivial inclusion, as $\partial f\left(
x\right) +\partial g\left( x\right) \subset \partial \left( f+g\right)
\left( x\right) $ comes directly from the definition of subdifferential;
moreover, the lower semicontinuity of $f$ is not needed).
\end{theorem}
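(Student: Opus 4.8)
The plan is to treat this result as a compilation of standard facts and verify each item, grouping them by difficulty. Items $(iii)$ and $(vi)$ are essentially definitional and I would dispatch them first; items $(i)$ and $(ii)$ follow from the local boundedness theory of convex functions; the substantive content lies in $(iv)$, $(v)$, and $(vii)$, for which I would lean on the cited references while recording the main ideas.

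For $(vi)$, I would combine the Young--Fenchel inequality $f^{\ast}(x^{\ast}) + f(x) \geq \langle x, x^{\ast}\rangle$ with the definition of $f^{\ast}$ as a supremum: the subgradient inequality $f(y) - f(x) \geq \langle y - x, x^{\ast}\rangle$ for all $y$ rearranges to $\langle y, x^{\ast}\rangle - f(y) \leq \langle x, x^{\ast}\rangle - f(x)$, so the supremum defining $f^{\ast}(x^{\ast})$ is attained at $x$ and equals $\langle x, x^{\ast}\rangle - f(x)$; conversely, equality in Young--Fenchel restores the subgradient inequality for every $y$. This argument uses only the definitions, which explains the parenthetical remark that convexity is not needed. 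For $(iii)$, I would simply take $x^{\ast} = 0$ in the defining inequality of $\partial f$, recovering $f(y) \geq f(x)$ for all $y$, i.e. Fermat's rule. For $(i)$ and $(ii)$, the key fact is that a proper convex function is locally Lipschitz, hence continuous, on the interior of its domain, which I would obtain from boundedness above on a neighborhood together with convexity; the existence of a subgradient in $(ii)$ then follows by separating $(x, f(x))$ from the interior of the epigraph with a non-vertical supporting hyperplane, whose slope yields the desired subgradient.

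The main obstacles are concentrated in $(iv)$, $(v)$, and $(vii)$. Item $(iv)$ is Rockafellar's maximal monotonicity theorem, genuinely deep in the Banach-space setting, which I would simply cite. Item $(v)$ I would derive from $(vi)$: given $(x,x^{\ast}),(y,y^{\ast}) \in \mathrm{gph}\,\partial f$ with $\langle x - y, x^{\ast} - y^{\ast}\rangle = 0$, I write the two conjugate equalities $f(x) + f^{\ast}(x^{\ast}) = \langle x, x^{\ast}\rangle$ and $f(y) + f^{\ast}(y^{\ast}) = \langle y, y^{\ast}\rangle$, add them, and compare with the two Young--Fenchel inequalities $f(x) + f^{\ast}(y^{\ast}) \geq \langle x, y^{\ast}\rangle$ and $f(y) + f^{\ast}(x^{\ast}) \geq \langle y, x^{\ast}\rangle$; the orthogonality relation makes the sum of the two right-hand sides equal to $\langle x, x^{\ast}\rangle + \langle y, y^{\ast}\rangle$, forcing both inequalities to hold as equalities, whence $(vi)$ gives $y^{\ast} \in \partial f(x)$ and $x^{\ast} \in \partial f(y)$.

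Finally, for $(vii)$ the inclusion $\partial f + \partial g \subset \partial(f+g)$ is immediate from the definitions; the reverse inclusion is the Moreau--Rockafellar sum rule, whose proof requires a separation or Fenchel-duality argument and crucially uses the constraint qualification $(\mathrm{int}\,\mathrm{dom}\,f) \cap \mathrm{dom}\,g \neq \emptyset$ to rule out a vertical separating hyperplane. I expect this reverse inclusion, together with $(iv)$, to be the hardest part, and both are best handled by invoking the stated references rather than reproving them here.
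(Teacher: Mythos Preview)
The paper does not supply a proof of this theorem at all: it is stated purely as a compilation of cited results from the literature, with no accompanying \texttt{proof} environment. Your plan to verify each item is therefore more than the paper itself does, and the sketches you give are correct and standard.

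One point worth noting: your derivation of $(v)$ from $(vi)$ via the separable function $(x,x^{\ast})\mapsto f(x)+f^{\ast}(x^{\ast})$ is exactly the alternative argument the paper records later, in the Remark following Proposition~\ref{prop paramonotone repre}, where paramonotonicity of $\partial f$ is recovered from the representative-function criterion using $h_{f}(x,x^{\ast})=f(x)+f^{\ast}(x^{\ast})$. So your treatment of $(v)$ aligns precisely with the paper's own viewpoint.
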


Recall that, for an arbitrary monotone operator $T:X\rightrightarrows
X^{\ast },$ an lsc convex function $h:X\times X^{\ast }\rightarrow \left]
-\infty ,+\infty \right] $ is said to be a \emph{representative} \emph{%
function }of $T$ if 
\begin{equation*}
h(x,x^{\ast })\left\{ 
\begin{array}{cl}
=\left\langle x,x^{\ast }\right\rangle , & \text{if }(x,x^{\ast })\in \gph T,
\\ 
>\left\langle x,x^{\ast }\right\rangle , & \text{elsewhere.}%
\end{array}%
\right. .
\end{equation*}%
Operators for which a representative function exists are called
representable monotone. For a detailed study of representable monotone
operators we refer to \cite{MS05}, where this notion was introduced.

\begin{remark}
\emph{From Theorem \ref{Theorem_properties_convexity}}$\left( vi\right) ,$%
\emph{\ observe that, if }$f:X\rightarrow \left] -\infty ,+\infty \right] $ 
\emph{is a proper lsc convex function, the function} $h_{f}:X\times X^{\ast
}\rightarrow \left] -\infty ,+\infty \right] $ \emph{defined by} 
\begin{equation}
h_{f}(x,x^{\ast })=f\left( x\right) +f^{\ast }\left( x^{\ast }\right) ,\text{
\emph{for }}\left( x,x^{\ast }\right) \in X\times X^{\ast }  \label{eq_hf}
\end{equation}%
\emph{is a representative function of }$\partial f.$ \emph{More generally,
every maximally monotone operator is representable, as far as its well-known
Fitzpatrick function is a representative function (see, e.g. \cite[Section
9.1.2]{BorweinVanderwerff10} for details). An easy consequence of this fact
is that intersections of arbitrary collections of maximally monotone
operators are representable, too. According to \cite[Corollary 32]{MS05}, in
finite-dimensional spaces only such intersections are representable. This is
no longer true in infinite dimensional spaces, as proved in \cite[Theorem
11.2]{S07}.}
\end{remark}

The rest of this section is devoted to recall some results about metric
projections and, in order to ensure existence and uniqueness of the best
approximation to closed convex sets, we assume that $X$ is a Hilbert space.
Here, $\left\Vert \cdot \right\Vert $ denotes the norm associated with the
corresponding inner product $\left\langle \cdot ,\cdot \right\rangle $.
Given any nonempty closed convex set $S\subset X,$ we denote by $%
P_{S}:X\rightarrow X$ the \emph{metric projection on }$S$, which assigns to
each $x\in X$ its (unique) best approximation in $S,$ denoted by $%
P_{S}\left( x\right) $, i.e., $P_{S}\left( x\right) $ is the unique point of 
$S$ such that 
\begin{equation*}
\left\Vert x-P_{S}\left( x\right) \right\Vert =d\left( x,S\right) =\min
\left\{ \left\Vert x-s\right\Vert :s\in S\right\} .
\end{equation*}%
(Observe that we write $P_{S}:X\rightarrow X$ instead $P_{S}:X%
\rightrightarrows X$ due to its single-valuedness.) It is well-known that
function $x\mapsto d\left( x,S\right) ,$ denoted for convenience by $%
d_{S}:X\rightarrow \left[ 0,+\infty \right[ ,$ is a continuous convex
function. Recall that, for a continuous convex function, $f:X\rightarrow 
\mathbb{R},$ applying \cite[Corollary 4.2.5 ]{BorweinVanderwerff10}, we
deduce that $f$ is G\^{a}teaux differentiable at a point $x$ if and only if $%
\partial f\left( x\right) $ reduces to a singleton, i.e. $\partial f\left(
x\right) =\left\{ \nabla f\left( x\right) \right\} $; see \cite[Section 2 ]%
{BorweinVanderwerff10} for details. In our applications, the facts that the
subdifferentials $\partial d_{S}\left( x\right) $ or $\partial
d_{S}^{2}\left( x\right) $ reduce to a singleton are crucial. Accordingly,
condition $\left( i\right) $ in the following proposition is stated directly
in these terms (instead of G\^{a}teaux differentiability). From now on, $%
N_{S}\left( x\right) $ denotes the normal cone to $S$ at $x$ which is given
by 
\begin{equation}
N_{S}\left( x\right) :=\left\{ x^{\ast }\in X^{\ast }\mid \left\langle
s-x,x^{\ast }\right\rangle \leq 0,\text{ }s\in S\right\} ,
\label{eq_normal_cone}
\end{equation}%
$B^{\ast }$ denotes the closed unit ball in $X^{\ast }$ and $\mathrm{bd}S$
the boundary of $S$.

\begin{proposition}
\label{Prop_dist_diff}\label{Prop_distance2}Let $X$ be a Hilbert space and $%
\emptyset \neq S\subset X$ a closed set. Then, we have

$\left( i\right) $ \emph{\cite[Corollary 4.2.5 and Theorem 4.5.7]%
{BorweinVanderwerff10} }$S$ is convex if and only if $\partial
d_{S}^{2}\left( x\right) $ is singleton for all $x\in X;$ in such a case%
\textbf{,} 
\begin{equation*}
\nabla d_{S}^{2}\left( x\right) =2\left( x-P_{S}\left( x\right) \right) .
\end{equation*}

$\left( ii\right) $ \emph{\cite[Proposition 4.1.5]{Lucchetti06}} \emph{(see
also \cite[Section 1]{holmes73})} If $S$ is convex, then 
\begin{equation*}
\partial d_{S}\left( x\right) =\left\{ 
\begin{array}{ll}
\left\{ 0\right\} , & \text{if }x\in \mathrm{int}S, \\ 
N_{S}\left( x\right) \cap B^{\ast },\text{ } & \text{if }x\in \mathrm{bd}S,
\\ 
\left\{ \left\Vert x-P_{S}\left( x\right) \right\Vert ^{-1}\left(
x-P_{S}\left( x\right) \right) \right\} , & \text{if }x\notin S.%
\end{array}%
\right.
\end{equation*}
\end{proposition}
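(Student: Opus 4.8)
The plan is to prove the two parts in turn, treating (i) as the conceptual core and deriving (ii) from it by elementary manipulations. Throughout, $\partial$ is the (convex) subdifferential in the global sense recalled in Section 2, and I note that $d_{S}^{2}$ is finite-valued and locally Lipschitz since $S\neq\emptyset$.

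For part (i) I would first dispatch the converse implication, which I regard as the main point. Suppose $\partial d_{S}^{2}(x)$ is nonempty for every $x\in X$ (a singleton is in particular nonempty). Choosing, for each $x$, a subgradient $x_{x}^{\ast}\in\partial d_{S}^{2}(x)$ yields an affine minorant $a_{x}(y):=d_{S}^{2}(x)+\langle y-x,x_{x}^{\ast}\rangle$ of $d_{S}^{2}$ with $a_{x}(x)=d_{S}^{2}(x)$; hence $d_{S}^{2}=\sup_{x}a_{x}$ is a supremum of affine functions and therefore convex. Since $S$ is closed, $d_{S}(x)=0$ exactly on $S$, so $S=\{x:d_{S}^{2}(x)\leq 0\}$ is a sublevel set of a convex function and thus convex. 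This is the step I expect to be least routine, as it is where closedness of $S$ and the global (rather than merely local) nature of the convex subdifferential do the work.

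For the forward implication in (i), assume $S$ convex; then $d_{S}$, and hence $d_{S}^{2}$, is convex and finite. Writing $v:=x-P_{S}(x)$, the projection inequality $\langle s-P_{S}(x),v\rangle\leq 0$ for $s\in S$ shows $S\subset H:=\{z:\langle z-P_{S}(x),v\rangle\leq 0\}$, whence $d_{S}\geq d_{H}$. Comparing $d_{S}^{2}(y)\geq d_{H}^{2}(y)$ with the candidate affine lower bound $d_{S}^{2}(x)+\langle y-x,2v\rangle$ reduces, after setting $t:=\langle y-P_{S}(x),v\rangle$, to the elementary inequality $(t-\|v\|^{2})^{2}\geq 0$; this gives $2v\in\partial d_{S}^{2}(x)$. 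Pairing this lower bound with the trivial upper estimate $d_{S}^{2}(x+th)\leq\|v+th\|^{2}$ shows that the directional derivative of $d_{S}^{2}$ at $x$ in every direction $h$ equals $2\langle h,v\rangle$, which is linear in $h$; thus $d_{S}^{2}$ is G\^{a}teaux differentiable with $\nabla d_{S}^{2}(x)=2(x-P_{S}(x))$, so $\partial d_{S}^{2}(x)$ is the announced singleton. (Equivalently, one may identify $\tfrac{1}{2}d_{S}^{2}$ with the Moreau envelope of the indicator of $S$ and invoke its differentiability.)

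For part (ii), assume $S$ convex and split into cases. If $x\notin S$ then $d_{S}(x)>0$ and $d_{S}=\sqrt{d_{S}^{2}}$ is differentiable there by the chain rule, giving the singleton $\nabla d_{S}(x)=\nabla d_{S}^{2}(x)/(2d_{S}(x))=\|x-P_{S}(x)\|^{-1}(x-P_{S}(x))$. If $x\in S$ then $d_{S}(x)=0$, so $x^{\ast}\in\partial d_{S}(x)$ iff $\langle y-x,x^{\ast}\rangle\leq d_{S}(y)$ for all $y$. Testing $y\in S$ gives $x^{\ast}\in N_{S}(x)$, while the $1$-Lipschitzness of $d_{S}$ forces $\|x^{\ast}\|\leq 1$, so $\partial d_{S}(x)\subset N_{S}(x)\cap B^{\ast}$; conversely, for $x^{\ast}\in N_{S}(x)\cap B^{\ast}$ and arbitrary $y$, decomposing $y-x=(y-P_{S}(y))+(P_{S}(y)-x)$ and using $\langle P_{S}(y)-x,x^{\ast}\rangle\leq 0$ together with Cauchy--Schwarz yields $\langle y-x,x^{\ast}\rangle\leq\|y-P_{S}(y)\|=d_{S}(y)$, so $\partial d_{S}(x)=N_{S}(x)\cap B^{\ast}$. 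Finally, $N_{S}(x)=\{0\}$ for $x\in\mathrm{int}\,S$ and $N_{S}(x)\cap B^{\ast}$ for $x\in\mathrm{bd}\,S$ recover the three branches of the stated formula.
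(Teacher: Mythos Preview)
Your argument is correct. Note, however, that the paper does not supply its own proof of this proposition: it is stated in the Preliminaries section with explicit citations to \cite{BorweinVanderwerff10} and \cite{Lucchetti06}, so there is no in-text proof to compare against. Your write-up therefore provides a self-contained justification where the paper simply quotes the literature.

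A few remarks on your approach. Your treatment of the converse in~(i) is the cleanest possible: by using the \emph{convex} subdifferential (global affine minorants), nonemptiness at every point immediately forces $d_{S}^{2}$ to coincide with the supremum of its affine supports, hence to be convex, and then $S=\{d_{S}^{2}\le 0\}$ is convex. This is worth emphasizing, because if one read ``$\partial d_{S}^{2}(x)$ is a singleton'' as ``$d_{S}^{2}$ is G\^{a}teaux differentiable'' in a non-convex sense, the implication ``differentiable $\Rightarrow$ $S$ convex'' becomes a much deeper statement (tied to the Chebyshev-set problem in infinite dimensions). Your argument avoids that entirely. For the forward direction, your half-space comparison and the squeeze via $d_{S}^{2}(x+th)\le\|v+th\|^{2}$ are exactly the standard Moreau-envelope computation; the parenthetical remark that $\tfrac{1}{2}d_{S}^{2}$ is the Moreau envelope of $I_{S}$ is a good pointer to the conceptual source. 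Part~(ii) is handled correctly; the only cosmetic point is that the last sentence is a little elliptical---you might state explicitly that for $x\in\mathrm{bd}\,S$ the set $N_{S}(x)\cap B^{\ast}$ is genuinely larger than $\{0\}$ in general, so the boundary case is not subsumed by the interior one.
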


\section{On paramonotone and bimonotone operators}

This section provides some results, appealed to in Sections 4 and 5, about
operators which are simultaneously paramonotone and bimonotone on a real
Banach space $X.$ To start with, we provide some basic results on these two
properties separately.

\begin{proposition}
\label{prop paramonotone repre}Let $T:X\rightrightarrows X^{\ast }$ be a
representable monotone operator. The following statements are equivalent:

$\left( i\right) $ $T$ is paramonotone;

$\left( ii\right) $ For any representative function $h$ of $T,$ the
following implication holds:%
\begin{equation}
\left. 
\begin{array}{c}
\left( x,x^{\ast }\right) ,\left( y,y^{\ast }\right) \in \mathrm{gph}T \\ 
\left\langle x-y,x^{\ast }-y^{\ast }\right\rangle =0%
\end{array}%
\right\} \Rightarrow h(x,y^{\ast })+h(y,x^{\ast })=h(x,x^{\ast
})+h(y,y^{\ast }).  \label{eq_implication_bis}
\end{equation}

$\left( iii\right) $There exists a representative function $h$ of $T$ such
that \emph{(\ref{eq_implication_bis})} holds.
\end{proposition}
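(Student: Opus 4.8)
The plan is to prove the cycle of equivalences $(i)\Rightarrow(ii)\Rightarrow(iii)\Rightarrow(i)$, exploiting the defining properties of a representative function throughout: namely that $h(x,x^\ast)\geq\langle x,x^\ast\rangle$ everywhere, with equality exactly on $\gph T$.

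First, for $(i)\Rightarrow(ii)$, I would fix an arbitrary representative function $h$ of $T$ and take pairs $(x,x^\ast),(y,y^\ast)\in\gph T$ with $\langle x-y,x^\ast-y^\ast\rangle=0$. By paramonotonicity (i.e., $(i)$), the crossed pairs $(x,y^\ast)$ and $(y,x^\ast)$ also lie in $\gph T$; hence, by the equality clause in the definition of a representative function, $h(x,y^\ast)=\langle x,y^\ast\rangle$ and $h(y,x^\ast)=\langle y,x^\ast\rangle$, while $h(x,x^\ast)=\langle x,x^\ast\rangle$ and $h(y,y^\ast)=\langle y,y^\ast\rangle$. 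The desired identity in (\ref{eq_implication_bis}) then reduces to the scalar equation $\langle x,y^\ast\rangle+\langle y,x^\ast\rangle=\langle x,x^\ast\rangle+\langle y,y^\ast\rangle$, which is precisely a rearrangement of the hypothesis $\langle x-y,x^\ast-y^\ast\rangle=0$. So this implication is a direct substitution.

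The step $(ii)\Rightarrow(iii)$ is immediate: since $T$ is representable, at least one representative function exists, and $(ii)$ asserts the implication holds for \emph{every} such function, so in particular for \emph{some} one, which is what $(iii)$ claims. The interesting direction, which I expect to be the main obstacle, is $(iii)\Rightarrow(i)$, where I must recover paramonotonicity from the weaker-looking identity. Here I would start from a representative function $h$ satisfying (\ref{eq_implication_bis}), take $(x,x^\ast),(y,y^\ast)\in\gph T$ with $\langle x-y,x^\ast-y^\ast\rangle=0$, and aim to show $(x,y^\ast),(y,x^\ast)\in\gph T$. The conclusion of (\ref{eq_implication_bis}) gives $h(x,y^\ast)+h(y,x^\ast)=h(x,x^\ast)+h(y,y^\ast)=\langle x,x^\ast\rangle+\langle y,y^\ast\rangle$, where the last equality uses that $(x,x^\ast),(y,y^\ast)\in\gph T$. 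Using the hypothesis to rewrite $\langle x,x^\ast\rangle+\langle y,y^\ast\rangle=\langle x,y^\ast\rangle+\langle y,x^\ast\rangle$, I obtain $h(x,y^\ast)+h(y,x^\ast)=\langle x,y^\ast\rangle+\langle y,x^\ast\rangle$, i.e. $\bigl(h(x,y^\ast)-\langle x,y^\ast\rangle\bigr)+\bigl(h(y,x^\ast)-\langle y,x^\ast\rangle\bigr)=0$.

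The key observation closing the argument is that each of the two bracketed quantities is nonnegative, by the inequality clause of the representative function definition ($h\geq\langle\cdot,\cdot\rangle$ pointwise). Two nonnegative numbers summing to zero must both vanish, so $h(x,y^\ast)=\langle x,y^\ast\rangle$ and $h(y,x^\ast)=\langle y,x^\ast\rangle$; by the characterization of $\gph T$ via the equality case, this forces $(x,y^\ast),(y,x^\ast)\in\gph T$, which is exactly paramonotonicity. The crux of the whole proposition is thus the nonnegativity-plus-vanishing-sum trick in this last direction; the other two implications are essentially bookkeeping with the definition. I would double-check only that the representative function is indeed permitted to take the value $+\infty$ off the graph without breaking the nonnegativity argument, which it is not, since $h(x,y^\ast)-\langle x,y^\ast\rangle=0$ is finite and forces finiteness there.
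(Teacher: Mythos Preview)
Your proposal is correct and follows essentially the same route as the paper's proof: the cycle $(i)\Rightarrow(ii)\Rightarrow(iii)\Rightarrow(i)$, using in both nontrivial directions the defining properties of a representative function together with the bilinear identity $\langle x,y^\ast\rangle+\langle y,x^\ast\rangle=\langle x,x^\ast\rangle+\langle y,y^\ast\rangle$ equivalent to $\langle x-y,x^\ast-y^\ast\rangle=0$, and in $(iii)\Rightarrow(i)$ the ``two nonnegative terms summing to zero must both vanish'' observation.
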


\begin{proof}
$(i)\Rightarrow (ii)$. Consider any representative function of $T,$ $h,$ and
take $\left( x,x^{\ast }\right) ,\left( y,y^{\ast }\right) \in \mathrm{gph}%
T, $ with $\left\langle x-y,x^{\ast }-y^{\ast }\right\rangle =0$. Then, the
paramonotonicity entails $y^{\ast }\in T(x)$ and $x^{\ast }\in T(y),$
yielding 
\begin{equation*}
h(x,y^{\ast })+h(y,x^{\ast })=\left\langle x,y^{\ast }\right\rangle
+\left\langle y,x^{\ast }\right\rangle =\left\langle x,x^{\ast
}\right\rangle +\left\langle y,y^{\ast }\right\rangle =h(x,x^{\ast
})+h(y,y^{\ast });
\end{equation*}

$(ii)\Rightarrow (iii)$. Straightforward.

$(iii)\Rightarrow (i)$. Let $h$ be a representative function of $T$
satisfying (\ref{eq_implication_bis}). Let $x,y\in X$, $x^{\ast }\in T(x)$, $%
y^{\ast }\in T(y)$ and suppose $\left\langle x-y,x^{\ast }-y^{\ast
}\right\rangle =0.$ Hence, 
\begin{equation*}
h(x,y^{\ast })+h(y,x^{\ast })=h(x,x^{\ast })+h(y,y^{\ast })=\left\langle
x,x^{\ast }\right\rangle +\left\langle y,y^{\ast }\right\rangle
=\left\langle x,y^{\ast }\right\rangle +\left\langle y,x^{\ast
}\right\rangle .
\end{equation*}%
Since $h(x,y^{\ast })\geq \left\langle x,y^{\ast }\right\rangle $ and $%
h(y,x^{\ast })\geq \left\langle y,x^{\ast }\right\rangle ,$ these
inequalities actually hold as equalities, yielding $y^{\ast }\in T(x)$ and $%
x^{\ast }\in T(y)$.
\end{proof}

\begin{remark}
\emph{Observe that the paramonotonicity of the subdifferential operator }$%
\partial f$ \emph{of a proper lsc convex function }$f$\emph{\ can be
alternatively deduced from Proposition \ref{prop paramonotone repre}.} \emph{%
Just consider the representative function} $h_{f}$ \emph{introduced in (\ref%
{eq_hf}), which is separable and, hence, one always has}%
\begin{equation*}
h_{f}(x,y^{\ast })+h_{f}(y,x^{\ast })=h_{f}(x,x^{\ast })+h_{f}(y,y^{\ast }).
\end{equation*}
\end{remark}

Other examples of paramonotone operators are mappings of the form $I-A$
where $I$ is the identity mapping and $A$ is nonexpansive (see \cite[Theorem
6.1]{BauschXWangLYao14}); see also \cite[Section 3 ]{Iusem98} for the
analysis of paramonotonicity of affine functions in $\mathbb{R}^{n}.$

\begin{proposition}
\label{prop bimonotone char}For an operator $T:X\rightrightarrows X^{\ast },$
the following conditions are equivalent:

$\left( i\right) $ $T$ is bimonotone;

$\left( ii\right) $ $\left\langle x-y,x^{\ast }-y^{\ast }\right\rangle =0,$
whenever $\left( x,x^{\ast }\right) ,\left( y,y^{\ast }\right) \in \mathrm{%
gph}T;$

$\left( iii\right) $ There exist monotone operators $T_{1}$ and $T_{2}$ such
that $T=T_{1}\cap \left( -T_{2}\right) .$
\end{proposition}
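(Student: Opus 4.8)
The plan is to establish the full equivalence through $(i)\Leftrightarrow(ii)$ together with $(i)\Rightarrow(iii)$ and $(iii)\Rightarrow(ii)$, relying throughout on the elementary observation that $-T$ is the operator whose graph is $\{(x,-x^{\ast}):(x,x^{\ast})\in\gph T\}$. First I would unwind what monotonicity of $-T$ means: substituting $-x^{\ast},-y^{\ast}$ for the dual coordinates shows that $-T$ is monotone precisely when $\langle x-y,x^{\ast}-y^{\ast}\rangle\leq 0$ for all $(x,x^{\ast}),(y,y^{\ast})\in\gph T$. This reverse inequality is the engine behind every implication.

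For $(i)\Leftrightarrow(ii)$: bimonotonicity of $T$ means that $T$ and $-T$ are both monotone, which by the above gives simultaneously $\langle x-y,x^{\ast}-y^{\ast}\rangle\geq 0$ and $\langle x-y,x^{\ast}-y^{\ast}\rangle\leq 0$ on $\gph T$, forcing equality; conversely, condition $(ii)$ trivially yields both inequalities and hence monotonicity of both $T$ and $-T$. This step is immediate once the sign convention for $-T$ is in place.

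For $(i)\Rightarrow(iii)$: the natural witnesses are $T_{1}:=T$ and $T_{2}:=-T$. Both are monotone by hypothesis, and since $-T_{2}=-(-T)=T$, one has $T_{1}\cap(-T_{2})=T\cap T=T$, giving the desired decomposition. For $(iii)\Rightarrow(ii)$: if $T=T_{1}\cap(-T_{2})$ with $T_{1},T_{2}$ monotone, then any $(x,x^{\ast}),(y,y^{\ast})\in\gph T$ satisfy $x^{\ast}\in T_{1}(x)$, $y^{\ast}\in T_{1}(y)$ and $-x^{\ast}\in T_{2}(x)$, $-y^{\ast}\in T_{2}(y)$; monotonicity of $T_{1}$ gives $\langle x-y,x^{\ast}-y^{\ast}\rangle\geq 0$, while monotonicity of $T_{2}$ applied to the flipped pairs gives $\langle x-y,x^{\ast}-y^{\ast}\rangle\leq 0$, whence equality, i.e. $(ii)$. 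Chaining these with $(i)\Leftrightarrow(ii)$ closes the cycle and yields the three-way equivalence.

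I do not expect any genuine obstacle here; the proposition is essentially a bookkeeping exercise rather than a substantive argument. The only point demanding care is the consistent handling of signs: one must not conflate $\gph(-T)$ with $\gph T$, and one must read the intersection $T_{1}\cap(-T_{2})$ as imposing \emph{simultaneously} $x^{\ast}\in T_{1}(x)$ and $-x^{\ast}\in T_{2}(x)$. Once these conventions are pinned down, all three implications reduce to one-line computations.
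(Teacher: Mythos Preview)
Your proof is correct and follows essentially the same approach as the paper: the equivalence $(i)\Leftrightarrow(ii)$ is declared trivial there, the implication $(i)\Rightarrow(iii)$ is proved exactly via $T_{1}:=T$, $T_{2}:=-T$, and the remaining implication is obtained by observing that $\gph T\subset\gph T_{1}$ and $\gph(-T)\subset\gph T_{2}$, which is precisely your computation recast as $(iii)\Rightarrow(i)$ rather than $(iii)\Rightarrow(ii)$.
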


\begin{proof}
$\left( i\right) \Leftrightarrow \left( ii\right) $ is trivial.

$\left( i\right) \Rightarrow \left( iii\right) $. Write $T=T\cap \left(
-\left( -T\right) \right) $.

$\left( iii\right) \Rightarrow \left( i\right) $. If $T_{1}$ is monotone, so
is $T,$ since $T\subset T_{1}$. Analogously, since%
\begin{equation*}
-T=T_{2}\cap \left( -T_{1}\right) ,
\end{equation*}%
$-T$ is monotone, too.
\end{proof}

\begin{proposition}
\label{prop bimonotone repre}For a representable monotone operator $%
T:X\rightrightarrows X^{\ast },$ the following conditions are equivalent:

$\left( i\right) $ $T$ is bimonotone;

$\left( ii\right) $ For any representative function $h$ of $T$, the
following implication holds%
\begin{equation}
h(x,x^{\ast })+h(y,y^{\ast })=\left\langle x,x^{\ast }\right\rangle
+\left\langle y,y^{\ast }\right\rangle \Rightarrow h(x,x^{\ast
})+h(y,y^{\ast })=\left\langle x,y^{\ast }\right\rangle +\left\langle
y,x^{\ast }\right\rangle .  \label{impl bim}
\end{equation}

$(iii)$ There exists a representative function $h$ of $T$ such that \emph{(%
\ref{impl bim})} holds.
\end{proposition}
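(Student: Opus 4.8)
The plan is to prove the chain of equivalences by establishing $(i)\Rightarrow(ii)\Rightarrow(iii)\Rightarrow(i)$, closely mirroring the structure of the proof of Proposition \ref{prop paramonotone repre}. The central tool throughout is the characterization of bimonotonicity provided by Proposition \ref{prop bimonotone char}$(ii)$, namely that $T$ is bimonotone precisely when $\langle x-y,x^{\ast}-y^{\ast}\rangle=0$ for \emph{every} pair $(x,x^{\ast}),(y,y^{\ast})\in\gph T$, together with the defining property of a representative function $h$: it agrees with the pairing $\langle\cdot,\cdot\rangle$ exactly on the graph of $T$ and strictly dominates it off the graph.

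For $(i)\Rightarrow(ii)$, I would fix an arbitrary representative function $h$ and any two pairs $(x,x^{\ast}),(y,y^{\ast})\in\gph T$. The hypothesis in the left-hand side of \eqref{impl bim}, that $h(x,x^{\ast})+h(y,y^{\ast})=\langle x,x^{\ast}\rangle+\langle y,y^{\ast}\rangle$, together with the representative-function inequalities $h(x,x^{\ast})\geq\langle x,x^{\ast}\rangle$ and $h(y,y^{\ast})\geq\langle y,y^{\ast}\rangle$, forces both to hold as equalities; by definition of a representative function this means $(x,x^{\ast}),(y,y^{\ast})\in\gph T$. Bimonotonicity then gives $\langle x-y,x^{\ast}-y^{\ast}\rangle=0$, i.e. $\langle x,x^{\ast}\rangle+\langle y,y^{\ast}\rangle=\langle x,y^{\ast}\rangle+\langle y,x^{\ast}\rangle$, which is exactly the desired right-hand side. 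The implication $(ii)\Rightarrow(iii)$ is immediate, since a representable operator has at least one representative function.

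The direction $(iii)\Rightarrow(i)$ is where the argument must be run carefully, and I expect it to be the main obstacle. Here I am given a single representative function $h$ satisfying \eqref{impl bim}, and I must recover bimonotonicity. I would take arbitrary $(x,x^{\ast}),(y,y^{\ast})\in\gph T$; then $h(x,x^{\ast})=\langle x,x^{\ast}\rangle$ and $h(y,y^{\ast})=\langle y,y^{\ast}\rangle$, so the antecedent of \eqref{impl bim} holds, and the consequent yields $\langle x,x^{\ast}\rangle+\langle y,y^{\ast}\rangle=\langle x,y^{\ast}\rangle+\langle y,x^{\ast}\rangle$. Rearranging gives $\langle x-y,x^{\ast}-y^{\ast}\rangle=0$, and since this holds for every such pair, Proposition \ref{prop bimonotone char} delivers bimonotonicity. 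The subtle point to watch is that, unlike the paramonotone case, here the relevant algebraic identity $\langle x,x^{\ast}\rangle+\langle y,y^{\ast}\rangle=\langle x,y^{\ast}\rangle+\langle y,x^{\ast}\rangle$ is itself equivalent to $\langle x-y,x^{\ast}-y^{\ast}\rangle=0$, so the consequent of \eqref{impl bim} directly encodes the orthogonality condition rather than a graph-membership statement; I must confirm that the equality $h(x,y^{\ast})+h(y,x^{\ast})=\dots$ is \emph{not} asserted in \eqref{impl bim} (only the pairing identity is), so no appeal to strict domination of $h$ off the graph is needed in this direction — a point worth stating explicitly to avoid confusion with the proof of Proposition \ref{prop paramonotone repre}.
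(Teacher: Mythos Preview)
Your proposal is correct and follows essentially the same approach as the paper's proof: the chain $(i)\Rightarrow(ii)\Rightarrow(iii)\Rightarrow(i)$, the use of the representative-function equalities to recognize graph membership in $(i)\Rightarrow(ii)$, and the direct algebraic rearrangement $\langle x,x^{\ast}\rangle+\langle y,y^{\ast}\rangle=\langle x,y^{\ast}\rangle+\langle y,x^{\ast}\rangle\Leftrightarrow\langle x-y,x^{\ast}-y^{\ast}\rangle=0$ in $(iii)\Rightarrow(i)$ all match the paper exactly. Your closing remark distinguishing this argument from the paramonotone case is a helpful clarification but not part of the paper's proof.
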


\begin{proof}
$\left( i\right) \Rightarrow \left( ii\right) .$ Consider any representative
function $h$ of $T$, and assume that 
\begin{equation*}
h(x,x^{\ast })+h(y,y^{\ast })=\left\langle x,x^{\ast }\right\rangle
+\left\langle y,y^{\ast }\right\rangle .
\end{equation*}%
Hence, $h(x,x^{\ast })=\left\langle x,x^{\ast }\right\rangle $ and $%
h(y,y^{\ast })=\left\langle y,y^{\ast }\right\rangle ,$ that is, $\left(
x,x^{\ast }\right) ,\left( y,y^{\ast }\right) \in \mathrm{gph}T,$ yielding $%
\left\langle x-y,x^{\ast }-y^{\ast }\right\rangle =0.$ Consequently,%
\begin{equation*}
\left\langle x,y^{\ast }\right\rangle +\left\langle y,x^{\ast }\right\rangle
=\left\langle x,x^{\ast }\right\rangle +\left\langle y,y^{\ast
}\right\rangle =h(x,x^{\ast })+h(y,y^{\ast }).
\end{equation*}

$\left( ii\right) \Rightarrow \left( iii\right) .$ Straightforward.

$\left( iii\right) \Rightarrow \left( i\right) .$ Let $x,y\in X,$ $\left(
x,x^{\ast }\right) ,\left( y,y^{\ast }\right) \in \mathrm{gph}T.$ We then
have%
\begin{equation*}
h(x,x^{\ast })+h(y,y^{\ast })=\left\langle x,x^{\ast }\right\rangle
+\left\langle y,y^{\ast }\right\rangle ,
\end{equation*}%
and hence $\left\langle x,y^{\ast }\right\rangle +\left\langle y,x^{\ast
}\right\rangle =h(x,x^{\ast })+h(y,y^{\ast })=\left\langle x,x^{\ast
}\right\rangle +\left\langle y,y^{\ast }\right\rangle ,$ from which the
equality $\left\langle x-y,x^{\ast }-y^{\ast }\right\rangle =0$ immediately
follows.
\end{proof}

\bigskip

From now on, symbol `$\perp $' represents orthogonality; specifically, given
any subsets $A\subset X$ and $B\subset X^{\ast },$ $A\perp B$ means that $%
\left\langle x,x^{\ast }\right\rangle =0$ for any $\left( x,x^{\ast }\right)
\in A\times B,$ whereas $A^{\bot }:=\left\{ x^{\ast }\in X^{\ast }\mid
\left\langle x,x^{\ast }\right\rangle =0,\text{ for all }x\in A\right\} $
and $B^{\bot }:=\{x\in X\mid \allowbreak \left\langle x,x^{\ast
}\right\rangle =0,$ for all $x^{\ast }\in B\}.$

\begin{corollary}
\label{cor bimon param implies constant}For $T:X\rightrightarrows X^{\ast },$
the following conditions are equivalent:

$\left( i\right) $ $T$ is paramonotone and bimonotone$;$

$\left( ii\right) $ $T$ is monotone and constant on its domain$;$

$\left( iii\right) $ $\left( \dom T-\dom T\right) \bot \left( \range T-%
\range
T\right) $ and $\mathrm{gph}~T=\dom T\times \range T.$
\end{corollary}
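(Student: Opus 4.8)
The plan is to establish the three equivalences by proving $(i) \Rightarrow (ii)$, $(ii) \Rightarrow (i)$, and $(ii) \Leftrightarrow (iii)$ separately, relying throughout on the characterization of bimonotonicity recorded in Proposition \ref{prop bimonotone char}: for a monotone operator, bimonotonicity amounts to $\left\langle x-y,x^{\ast }-y^{\ast }\right\rangle =0$ for every pair $\left( x,x^{\ast }\right) ,\left( y,y^{\ast }\right) \in \gph T$.

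For $(i) \Rightarrow (ii)$, I would fix $x,y\in \dom T$ and $x^{\ast }\in T(x)$, choose any $y^{\ast }\in T(y)$, and note that bimonotonicity forces $\left\langle x-y,x^{\ast }-y^{\ast }\right\rangle =0$. Paramonotonicity then yields $\left( x,y^{\ast }\right) ,\left( y,x^{\ast }\right) \in \gph T$; in particular $x^{\ast }\in T(y)$. This gives $T(x)\subset T(y)$, and the reverse inclusion follows by symmetry, so $T$ is constant on $\dom T$, monotonicity being already part of the hypothesis.

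The converse $(ii) \Rightarrow (i)$ is where constancy does the work. Writing $C:=\range T$, constancy means $T(x)=C$ for every $x\in \dom T$. Paramonotonicity becomes automatic: if $\left( x,x^{\ast }\right) ,\left( y,y^{\ast }\right) \in \gph T$, then $x^{\ast },y^{\ast }\in C=T(x)=T(y)$, so the swapped pairs already lie in $\gph T$ irrespective of any inner-product condition. For bimonotonicity, I would apply monotonicity twice: once to $\left( x,x^{\ast }\right) ,\left( y,y^{\ast }\right) $ to obtain $\left\langle x-y,x^{\ast }-y^{\ast }\right\rangle \geq 0$, and once to the swapped pairs $\left( x,y^{\ast }\right) ,\left( y,x^{\ast }\right) $, which belong to $\gph T$ by constancy, to obtain $\left\langle x-y,y^{\ast }-x^{\ast }\right\rangle \geq 0$; together these force the inner product to vanish, which is bimonotonicity by Proposition \ref{prop bimonotone char}.

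Finally, for $(ii) \Leftrightarrow (iii)$, observe that constancy of $T$ on its domain is exactly the statement $\gph T=\dom T\times \range T$, since the fibre over each $x\in \dom T$ equals $\range T$ precisely when $T$ is constant. Under this product structure an arbitrary pair in $\gph T$ has the form $\left( x,x^{\ast }\right) ,\left( y,y^{\ast }\right) $ with $x,y\in \dom T$ and $x^{\ast },y^{\ast }\in \range T$ chosen independently, so monotonicity $\left\langle x-y,x^{\ast }-y^{\ast }\right\rangle \geq 0$ holds for all such choices if and only if the pairing vanishes identically on $\left( \dom T-\dom T\right) \times \left( \range T-\range T\right) $, i.e. $\left( \dom T-\dom T\right) \perp \left( \range T-\range T\right) $. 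I expect no serious obstacle; the only point requiring care is the bookkeeping in $(i) \Rightarrow (ii)$, where bimonotonicity supplies the vanishing inner product that activates paramonotonicity, and the observation that over a product graph the freely chosen sign of $x^{\ast }-y^{\ast }$ can be consistent with monotonicity only when the pairing is identically zero.
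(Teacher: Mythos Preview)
Your argument is correct and follows essentially the same route as the paper: the implications $(i)\Rightarrow(ii)$ and $(ii)\Rightarrow(i)$ are identical to the paper's, and for $(ii)\Leftrightarrow(iii)$ you spell out explicitly what the paper leaves as a one-line remark (namely, that constancy is equivalent to the product-graph condition, and that under this product structure monotonicity is equivalent to the orthogonality because the sign of $x^{\ast}-y^{\ast}$ can be flipped freely).
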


\begin{proof}
$\left( i\right) \Rightarrow \left( ii\right) $. Let $x,y\in \dom T,$ and
take $x^{\ast }\in T\left( x\right) ,\,\ y^{\ast }\in T\left( y\right) .$ By
bimonotonicity, we have $\left\langle x-y,x^{\ast }-y^{\ast }\right\rangle
=0.$ Hence, by paramonotonicity, $y^{\ast }\in T\left( x\right) $\thinspace
and$\ x^{\ast }\in T\left( y\right) .$ This proves that $T\left( x\right)
=T\left( y\right) .$

$\left( ii\right) \Rightarrow \left( i\right) $. The paramonotonicity of $T$
is an obvious consequence of its being constant on its domain. To prove
bimonotonicity, let $\left( x,x^{\ast }\right) ,\left( y,y^{\ast }\right)
\in \mathrm{gph}T$. Monotonicity implies $\left\langle x-y,x^{\ast }-y^{\ast
}\right\rangle \geq 0$, and we can interchange $x^{\ast }$ and $y^{\ast },$
since $T\left( x\right) =T\left( y\right) ;$ therefore $\left\langle
x-y,x^{\ast }-y^{\ast }\right\rangle =0.$

$\left( ii\right) \Leftrightarrow \left( iii\right) $ Comes from the fact
that $T$ is constant on $\mathrm{dom}T$ if and only if $\mathrm{gph}~T=\dom %
T\times \range T$.
\end{proof}

\begin{corollary}
Let $T:X\rightrightarrows X^{\ast }$ be paramonotone and bimonotone. Then,
we have

$\left( i\right) $ If $\dom T$ is dense in $X$, then $T$ is single valued$;$

$\left( ii\right) $ If $\range T$ is dense in $X^{\ast }$, then $\dom T$ is
a singleton$;$

$\left( iii\right) $ $T$ is maximally monotone if and only if $\dom T$ and $%
\range T$ are closed affine varieties and 
\begin{equation}
\dom T-\dom T=\left( \range T-\range T\right) ^{\perp }.  \label{orth}
\end{equation}
\end{corollary}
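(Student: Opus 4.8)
All three parts build on Corollary~\ref{cor bimon param implies constant}: since $T$ is paramonotone and bimonotone, it is monotone and constant on its domain, so, abbreviating $D:=\dom T$ and $R:=\range T$, we have $\gph T=D\times R$ together with $\left(D-D\right)\perp\left(R-R\right)$. For $\left(i\right)$, if $D$ is dense then so is $D-D$ (it contains the dense set $D-\overline{d}$ for any fixed $\overline{d}\in D$); fixing $v^{\ast}\in R-R$, orthogonality gives $\left\langle u,v^{\ast}\right\rangle=0$ on $D-D$, and continuity of $u\mapsto\left\langle u,v^{\ast}\right\rangle$ forces this on all of $X$, whence $v^{\ast}=0$. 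Thus $R-R=\{0\}$ and $T$ is single valued. Part $\left(ii\right)$ is symmetric: for fixed $u\in D-D$ the functional $v^{\ast}\mapsto\left\langle u,v^{\ast}\right\rangle$ is norm-continuous on $X^{\ast}$ and vanishes on the dense set $R-R$, hence on all of $X^{\ast}$, so $u=0$ by Hahn--Banach and $D$ is a singleton.

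Part $\left(iii\right)$ is the substantial one, and I would attack it through the standard criterion that $T$ is maximally monotone iff every pair monotonically related to $\gph T$ already lies in $\gph T$. Fixing $\overline{d}\in D$, $\overline{r}\in R$ and writing a generic graph point as $\left(\overline{d}+p,\overline{r}+q^{\ast}\right)$, the cross term $\left\langle p,q^{\ast}\right\rangle$ vanishes by orthogonality, so a pair $\left(x_{0},x_{0}^{\ast}\right)$ is monotonically related to $\gph T$ precisely when, with $a:=x_{0}-\overline{d}$ and $a^{\ast}:=x_{0}^{\ast}-\overline{r}$,
\begin{equation*}
\left\langle a,a^{\ast}\right\rangle-\left\langle a,q^{\ast}\right\rangle-\left\langle p,a^{\ast}\right\rangle\geq0\quad\text{for all }p\in D-\overline{d},\ q^{\ast}\in R-\overline{r}.
\end{equation*}
Specializing to $a^{\ast}=0$ (respectively $a=0$) and invoking maximality, I would deduce $D-\overline{d}=\left(R-\overline{r}\right)^{\ominus}$ and $R-\overline{r}=\left(D-\overline{d}\right)^{\ominus}$, where $\left(\cdot\right)^{\ominus}$ denotes the polar cone (taken in the appropriate dual space). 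Feeding the orthogonality relation back in shows that each of these polar cones coincides with the corresponding annihilator; hence $D-\overline{d}$ and $R-\overline{r}$ are in fact the closed subspaces $\left(R-R\right)^{\perp}$ and $\left(D-D\right)^{\perp}$, so $D$ and $R$ are closed affine varieties satisfying $\dom T-\dom T=\left(\range T-\range T\right)^{\perp}$, which is \eqref{orth}.

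For the converse I would assume $D,R$ are closed affine varieties satisfying \eqref{orth} and read off from the displayed inequality that, the two linear terms being forced to vanish over subspaces and the cross term being $0$, the set of pairs monotonically related to $\gph T$ is exactly $D\times\left(\overline{r}+\left(D-D\right)^{\perp}\right)$; \eqref{orth} together with the closedness of $R$ then identifies this with $\gph T=D\times R$, giving maximality. The main obstacle is the forward implication: converting the single scalar inequality into the geometric statement that $D$ and $R$ are translates of mutually annihilating closed subspaces. The crux is to see that the cones delivered by maximality are genuine subspaces—here the orthogonality of Corollary~\ref{cor bimon param implies constant} is indispensable, upgrading each polar cone to an annihilator—and to control the interplay between norm- and weak$^{\ast}$-closedness when passing through annihilators in $X^{\ast}$ via the bipolar theorem, since $\left(D-D\right)^{\perp}=\left(R-R\right)^{\perp\perp}$ is only the weak$^{\ast}$-closure of $R-R$, so that the identification $\overline{r}+\left(D-D\right)^{\perp}=R$ is where the affine-variety hypothesis on $\range T$ does its work.
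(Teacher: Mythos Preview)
Your arguments for $(i)$ and $(ii)$ are correct and, if anything, cleaner than the paper's contradiction arguments; the content is the same, namely that density plus the orthogonality relation $\left(D-D\right)\perp\left(R-R\right)$ collapses one of the two difference sets to $\{0\}$.

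For the forward implication in $(iii)$ you take a genuinely different route. The paper writes down an explicit candidate extension $\widehat{T}$ with $\gph\widehat{T}=\left(x_{0}+\cl S\right)\times\left(x_{0}^{\ast}+S^{\perp}\right)$, where $S=\spann\left(D-D\right)$, checks via Corollary~\ref{cor bimon param implies constant} that $\widehat{T}$ is monotone, and then invokes maximality of $T$ to force $T=\widehat{T}$. You instead probe $\gph T$ with test pairs $\left(x_{0},\overline{r}\right)$ and $\left(\overline{d},x_{0}^{\ast}\right)$, read off $D-\overline{d}=\left(R-\overline{r}\right)^{\ominus}$ and its dual from the maximality criterion, and then use the orthogonality of Corollary~\ref{cor bimon param implies constant} to squeeze each polar cone down to the corresponding annihilator. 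Both work; the paper's construction is a bit more conceptual, while your argument is more self-contained and avoids having to verify monotonicity of an auxiliary operator.

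For the converse of $(iii)$ your scheme matches the paper's: force the linear terms to vanish over the subspaces $D-\overline{d}$ and $R-\overline{r}$, then land in $D$ and $R$ via annihilators. You are right to flag the passage through $\left(D-D\right)^{\perp}=\left(R-R\right)^{\perp\perp}$ as the delicate step, but your final sentence oversells what the hypothesis delivers: in a general Banach space, $\left(R-R\right)^{\perp\perp}$ is the weak$^{\ast}$-closure of $R-R$, and assuming merely that $\range T$ is a \emph{norm}-closed affine variety does not make $R-R$ weak$^{\ast}$-closed. The paper's proof glosses over exactly the same point---it asserts $\range T-\range T=\left(\dom T-\dom T\right)^{\perp}$ without comment---so your argument is no worse off, but be aware that as stated the converse really needs either reflexivity of $X$ or the stronger hypothesis that $\range T$ is weak$^{\ast}$-closed.
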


\begin{proof}
$\left( i\right) $ Assume, reasoning by contradiction, that there exist $%
\left( x,x^{\ast }\right) $ and\textbf{\ }$\left( x,\widetilde{x}^{\ast
}\right) $ in $\mathrm{gph}T$ with $x^{\ast }\neq \widetilde{x}^{\ast }$ and
take $u\in X$ with $\left\langle u,x^{\ast }-\widetilde{x}^{\ast
}\right\rangle \neq 0.$ Under the current assumption, we can take \text{a
sequence }$\{x^{r}\}_{r\in \mathbb{N}}\subset \dom T$ converging to $x+u.$
For $r$ large enough we have $\left\langle x^{r}-x,x^{\ast }-\widetilde{x}%
^{\ast }\right\rangle \neq 0$ and $x^{\ast }\in T\left( x\right) =T\left(
x^{r}\right) $ because of Corollary \ref{cor bimon param implies constant}.
This contradicts $\left( i\right) \Rightarrow \left( ii\right) $ in
Proposition \ref{prop bimonotone char}.

$\left( ii\right) $ follows analogously to $\left( i\right) $ by considering 
$\left( x,x^{\ast }\right) $ and $\left( \widetilde{x},x^{\ast }\right) $ in 
$\mathrm{gph}T$ with $x\neq \widetilde{x},$ taking again Corollary\textbf{\ }%
\ref{cor bimon param implies constant} into account.

$\left( iii\right) $ Assume that $T$ is maximally monotone. Take $x_{0}\in %
\dom T,$ $x_{0}^{\ast }\in \range T,$ and let $S$ and $S_{\ast }$ be the
linear subspaces generated by $\dom T-\dom T$ and $\range T-\range T,$
respectively. Define $\widehat{T}:X\rightrightarrows X^{\ast }$ by%
\begin{equation*}
\widehat{T}\left( x\right) :=\left\{ 
\begin{array}{c}
x_{0}^{\ast }+S^{\bot }\text{ if }x\in x_{0}+\cl S, \\ 
\emptyset \text{ \qquad otherwise. \qquad }%
\end{array}%
\right.
\end{equation*}%
We have%
\begin{eqnarray*}
\dom\widehat{T}-\dom\widehat{T} &=&\left( x_{0}+\cl S\right) -\left( x_{0}+%
\cl S\right) =\cl S-\cl S=\cl S=\left( S^{\bot }\right) ^{\bot } \\
&=&\left( S^{\bot }-S^{\bot }\right) ^{\bot }=\left( \left( x_{0}^{\ast
}+S^{\bot }\right) -\left( x_{0}^{\ast }+S^{\bot }\right) \right) ^{\bot } \\
&=&\left( \range\widehat{T}-\range\widehat{T}\right) ^{\perp },
\end{eqnarray*}%
which proves (\ref{orth}) for operator $\widehat{T}$. Moreover,%
\begin{equation*}
\mathrm{gph}~\widehat{T}=\left( x_{0}+\cl S\right) \times \left( x_{0}^{\ast
}+S^{\bot }\right) =\dom\widehat{T}\times \range\widehat{T}.
\end{equation*}%
Therefore, by equivalence $\left( i\right) \Leftrightarrow \left( iii\right) 
$ in Corollary \ref{cor bimon param implies constant}, the operator $%
\widehat{T}$ is paramonotone and bimonotone; in particular, $\widehat{T}$ is
monotone. On the other hand, by the same equivalence, we have%
\begin{equation*}
\range T-\range T\subset \left( \dom T-\dom T\right) ^{\bot }=S^{\bot };
\end{equation*}%
hence $S_{\ast }\subset S^{\bot }$ and%
\begin{eqnarray*}
\mathrm{gph}~T &=&\dom T\times \range T\subset \left( x_{0}+\cl S\right)
\times \left( x_{0}^{\ast }+\cl S_{\ast }\right) \subset \dom\widehat{T}%
\times \left( x_{0}^{\ast }+S^{\bot }\right) \\
&=&\dom\widehat{T}\times \range\widehat{T}=\mathrm{gph}~\widehat{T}.
\end{eqnarray*}%
Thus, by the maximal monotonicity of $T,$ we have $T=\widehat{T},$ from
which we deduce that $\dom T=\dom\widehat{T}=x_{0}+\cl S$ and $\range T=%
\range\widehat{T}=x_{0}^{\ast }+S^{\bot },$ thus proving that $\dom T$ and $%
\range T$ are closed affine varieties.

Let us see the converse implication. Let $(x,x^{\ast })\in X\times X^{\ast }$
be such that%
\begin{equation}
\left\langle x-y,x^{\ast }-y^{\ast }\right\rangle \geq 0\text{ for all }%
\left( y,y^{\ast }\right) \in \mathrm{gph}~T=\dom T\times \range T
\label{eq_mm}
\end{equation}%
(the latter equality following again from Corollary \ref{cor bimon param
implies constant}). Since $\range T$ is an affine variety, we can easily
prove that $x-y\in \left( \range T-\range T\right) ^{\bot }=\dom T-\dom T.$
More in detail, replace $y^{\ast }$\textbf{\ }in (\ref{eq_mm}) with\textbf{\ 
}$x_{0}^{\ast }\pm \lambda v$\textbf{\ }for any given $x_{0}^{\ast }\in %
\range T$ and any\textbf{\ }$v\in \range T-\range T,$ with $\lambda >0,$%
\textbf{\ }then divide both sides of the resulting specification of \textbf{(%
\ref{eq_mm}) }by $\lambda $\ and let $\lambda \rightarrow +\infty $ to obtain%
\textbf{\ }$\left\langle x-y,\pm v\right\rangle \geq 0.$\textbf{\ }%
Therefore, given that $\dom T$ is an affine variety, we deduce that $x\in %
\dom T.$ Similarly, using that $\range T-\range T=\left( \dom T-\dom %
T\right) ^{\bot },$ we obtain that $x^{\ast }\in \range T.$ Thus, $%
(x,x^{\ast })\in \dom T\times \range T=\mathrm{gph}~T,$ which proves that $T$
is maximally monotone.
\end{proof}

\bigskip

The following propositions involve a finite number of paramonotone operators
and are intended to provide a unified framework to deal with the
applications of Sections 4 and 5. First, we introduce the following lemma,
which has an easy proof.

\begin{lemma}
\label{sum}If $T_{1}:X\rightrightarrows X^{\ast }$ and $T_{2}:X%
\rightrightarrows X^{\ast }$ are paramonotone, then so are $T_{1}+T_{2}$ and 
$T_{1}\cap \left( -T_{2}\right) .$
\end{lemma}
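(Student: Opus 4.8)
The plan is to treat the two operators separately, in each case first confirming monotonicity and then verifying the paramonotonicity implication by reducing it to the paramonotonicity of the individual $T_i$.

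For $T_1+T_2$, recall that the sum of monotone operators is monotone, so it suffices to verify the defining implication. First I would take $(x,x^*),(y,y^*)\in\gph(T_1+T_2)$ with $\langle x-y,x^*-y^*\rangle=0$ and decompose $x^*=x_1^*+x_2^*$, $y^*=y_1^*+y_2^*$ with $x_i^*\in T_i(x)$ and $y_i^*\in T_i(y)$. The crucial step is the identity $\langle x-y,x^*-y^*\rangle=\langle x-y,x_1^*-y_1^*\rangle+\langle x-y,x_2^*-y_2^*\rangle$: since both summands are nonnegative by monotonicity of $T_1$ and $T_2$ and their sum vanishes, each must vanish. Applying paramonotonicity of $T_i$ separately then yields $y_i^*\in T_i(x)$ and $x_i^*\in T_i(y)$, and adding these across $i=1,2$ gives $y^*\in(T_1+T_2)(x)$ and $x^*\in(T_1+T_2)(y)$, as required.

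For $T_1\cap(-T_2)$, write $T:=T_1\cap(-T_2)$ and note that $(x,x^*)\in\gph T$ means $x^*\in T_1(x)$ and $-x^*\in T_2(x)$. By Proposition \ref{prop bimonotone char}, the intersection of a monotone operator with the negative of a monotone one is bimonotone; in particular, for any $(x,x^*),(y,y^*)\in\gph T$ one has $\langle x-y,x^*-y^*\rangle=0$ automatically. To verify paramonotonicity I would apply the paramonotonicity of $T_1$ to the pair $(x,x^*),(y,y^*)\in\gph T_1$, obtaining $y^*\in T_1(x)$ and $x^*\in T_1(y)$, and simultaneously apply the paramonotonicity of $T_2$ to the pair $(x,-x^*),(y,-y^*)\in\gph T_2$, where the same inner-product condition holds since $\langle x-y,(-x^*)-(-y^*)\rangle=-\langle x-y,x^*-y^*\rangle=0$; this yields $-y^*\in T_2(x)$ and $-x^*\in T_2(y)$. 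Combining the $T_1$- and $T_2$-conclusions gives $y^*\in T(x)$ and $x^*\in T(y)$.

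The arguments are short and essentially mechanical; the only point demanding care is the sign bookkeeping for $-T_2$, namely recognizing that applying the defining paramonotonicity implication to $T_2$ at the negated functionals $-x^*,-y^*$ is exactly what is needed, and that the zero inner product is preserved under this sign change. I expect no genuine obstacle beyond this bookkeeping, since the decomposition-and-nonnegativity trick for the sum and the appeal to bimonotonicity (Proposition \ref{prop bimonotone char}) for the intersection do all the work.
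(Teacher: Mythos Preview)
Your proof is correct; the decomposition-and-nonnegativity argument for $T_{1}+T_{2}$ and the sign-tracking argument for $T_{1}\cap(-T_{2})$ are exactly the standard verifications one would expect. The paper itself omits the proof entirely (it merely says the lemma ``has an easy proof''), so there is nothing to compare against; your appeal to Proposition~\ref{prop bimonotone char} for bimonotonicity is legitimate but in fact unnecessary, since monotonicity of $T_{1}\cap(-T_{2})$ already follows from the inclusion into $T_{1}$, and the paramonotonicity implication only requires you to \emph{assume} $\langle x-y,x^{\ast}-y^{\ast}\rangle=0$ rather than derive it.
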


\begin{proposition}
\label{prop 2 param intersec}\label{Th_intersection}Let $T_{i}:X%
\rightrightarrows X^{\ast },$ $i=1,\ldots ,m,$ be paramonotone operators.
Then the intersection mappings%
\begin{equation}
\widetilde{T}_{i}:=T_{i}\cap \left( -\medop\sum_{j\neq i}T_{j}\right) ,\text{
}i=1,\ldots ,m.  \label{eq_1}
\end{equation}%
are monotone and constant in their common domain%
\begin{equation*}
\mathcal{A}:\mathcal{=}\left\{ x\in X\mid 0\in \medop\sum_{j=1}^{m}T_{j}%
\left( x\right) \right\} .
\end{equation*}
\end{proposition}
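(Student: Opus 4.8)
The plan is to reduce the statement, for each fixed $i$, to a single application of Corollary \ref{cor bimon param implies constant}, by showing that $\widetilde{T}_i$ is simultaneously paramonotone and bimonotone; the implication $(i)\Rightarrow(ii)$ there then delivers at once that $\widetilde{T}_i$ is monotone and constant on its domain. It only remains to identify that domain with $\Ac$ and to check that it does not depend on $i$.

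First I would verify paramonotonicity of $\widetilde{T}_i$. A straightforward induction on the number of summands, using the additive part of Lemma \ref{sum}, shows that $\sum_{j\neq i}T_j$ is paramonotone (the base case being the two-operator statement covered directly by the lemma). Then the intersection part of Lemma \ref{sum}, applied to $T_i$ and $\sum_{j\neq i}T_j$, yields that $\widetilde{T}_i=T_i\cap(-\sum_{j\neq i}T_j)$ is paramonotone.

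Next I would establish bimonotonicity. Since paramonotone operators are in particular monotone, both $T_i$ and $\sum_{j\neq i}T_j$ are monotone; hence $\widetilde{T}_i$ is written as the intersection of a monotone operator with the negative of a monotone operator. Condition $(iii)$ of Proposition \ref{prop bimonotone char} is thus satisfied, and the implication $(iii)\Rightarrow(i)$ gives that $\widetilde{T}_i$ is bimonotone. With both properties in hand, Corollary \ref{cor bimon param implies constant} applies and shows $\widetilde{T}_i$ monotone and constant on $\dom\widetilde{T}_i$.

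Finally, I would identify the domain using (\ref{domT1y(-T2)}): taking $T_1=T_i$ and $T_2=\sum_{j\neq i}T_j$ gives $\dom\widetilde{T}_i=\{x\mid 0\in(T_i+\sum_{j\neq i}T_j)(x)\}=\{x\mid 0\in\sum_{j=1}^m T_j(x)\}=\Ac$, which is independent of $i$; thus $\Ac$ is the common domain of all the $\widetilde{T}_i$. I do not expect any genuine obstacle here, as the argument is purely a matter of assembling Lemma \ref{sum}, Proposition \ref{prop bimonotone char} and Corollary \ref{cor bimon param implies constant}. The only point requiring a small verification is the induction that makes $\sum_{j\neq i}T_j$ paramonotone, since Lemma \ref{sum} is phrased only for two operators.
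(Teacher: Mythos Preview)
Your proposal is correct and follows essentially the same approach as the paper: use Lemma \ref{sum} to get paramonotonicity of $\sum_{j\neq i}T_j$ and then of $\widetilde{T}_i$, invoke Proposition \ref{prop bimonotone char}$(iii)\Rightarrow(i)$ for bimonotonicity, apply Corollary \ref{cor bimon param implies constant}, and finally identify $\dom\widetilde{T}_i$ with $\Ac$. The only differences are that you spell out the induction behind the finite sum and cite (\ref{domT1y(-T2)}) explicitly for the domain identification, whereas the paper leaves these as implicit.
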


\begin{proof}
Fix $i\in \{1,\ldots ,m\}.\,\ $From Lemma \ref{sum}, $\sum_{j\neq i}T_{j}$
is paramonotone and, hence, the same lemma establishes that $\widetilde{T}%
_{i}$ is paramonotone. Then, from equivalence $\left( i\right)
\Leftrightarrow \left( iii\right) $ in Proposition \ref{prop bimonotone char}%
, $\widetilde{T}_{i}$ is bimonotone. Hence \ref{cor bimon param implies
constant} $\left( ii\right) $ yields that $\widetilde{T}_{i}$ is constant in 
$\mathrm{dom}\widetilde{T}_{i}.$ Finally, one easily sees that $\mathrm{dom}%
\widetilde{T}_{i}$ coincides with $\mathcal{A}$.
\end{proof}

\bigskip

Now, we particularize Proposition \ref{prop 2 param intersec} by considering
finitely many proper lsc convex functions, $f_{i}:X\rightarrow \left]
-\infty ,\mathbb{+\infty }\right] ,$ $i=1,\ldots ,m,$ and the corresponding
subdifferential operators $T_{i}:=\partial f_{i},$ $i=1,\ldots ,m.$ We
assume the following regularity condition in order to apply the
subdifferential sum rule (see Theorem \ref{Theorem_properties_convexity} $%
\left( vii\right) )$: there exists some index $i_{0}\in \{1,\ldots ,m\},$
such that%
\begin{equation}
\mathrm{dom}f_{i_{0}}\cap \left( \tbigcap_{i\neq i_{0}}\mathrm{intdom}%
f_{i}\right) \neq \emptyset ,  \label{eq_regul_cond}
\end{equation}%
which is equivalent to the existence of some $\overline{x}\in \cap
_{i=1,\ldots ,m}\mathrm{dom}f_{i}$ such that the $m-1$ of the functions $%
f_{i},$ $i\in \{1,\ldots ,m\}\setminus \{i_{0}\}$ are continuous at $%
\overline{x}$ (see Theorem \ref{Theorem_properties_convexity} $\left(
i\right) ).$

In this particular case, we are considering the operators 
\begin{equation}
\widetilde{T}_{i}:=\partial f_{i}\cap \left( -\sum_{j\neq i}\partial
f_{j}\right) ,\text{ }i=1,\ldots ,m,  \label{eq_2}
\end{equation}%
whose the common domain, appealing to statements $\left( iii\right) $ and $%
\left( vii\right) $ in Theorem \ref{Theorem_properties_convexity}, writes as%
\begin{equation}
\mathcal{A}\mathcal{=}\left\{ x\in X\mid 0\in \sum_{i=1}^{m}\partial
f_{i}\left( x\right) \right\} =\arg \min \sum_{i=1}^{m}f_{i}.  \label{eq_3}
\end{equation}

We summarize the previous comments in the following proposition.

\begin{proposition}
\label{op const}Let $f_{i}:X\rightarrow \left] -\infty ,\mathbb{+\infty }%
\right] ,$ $i=1,\ldots ,m,$ be proper lsc convex functions and assume that
for some $i_{0}\in \{1,\ldots ,m\}$ condition (\ref{eq_regul_cond}) holds.
Then, operators $\widetilde{T}_{i},$ $\{1,\ldots ,m\},$ defined in (\ref%
{eq_2}) are constant on their common domain 
\begin{equation*}
\mathcal{A=}\arg \min \sum_{i=1}^{m}f_{i}.
\end{equation*}
\end{proposition}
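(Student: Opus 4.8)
The plan is to read this proposition off Proposition \ref{prop 2 param intersec} by taking $T_i := \partial f_i$, the only extra content being the identification, recorded in (\ref{eq_3}), of the common domain $\mathcal{A}$ with $\arg\min \sum_{i=1}^m f_i$.

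First I would invoke Theorem \ref{Theorem_properties_convexity}$(v)$, which tells us that each $\partial f_i$ is paramonotone. Proposition \ref{prop 2 param intersec} then applies directly and yields that the operators $\widetilde{T}_i$ of (\ref{eq_2}) are monotone and constant on their common domain $\mathcal{A} = \{x \in X \mid 0 \in \sum_{i=1}^m \partial f_i(x)\}$. Hence the whole statement is reduced to the set identity $\{x \in X \mid 0 \in \sum_{i=1}^m \partial f_i(x)\} = \arg\min \sum_{i=1}^m f_i$. Here the inclusion ``$\subseteq$'' is free of any regularity assumption: the trivial inclusion $\sum_{i=1}^m \partial f_i(x) \subseteq \partial\bigl(\sum_{i=1}^m f_i\bigr)(x)$ noted in Theorem \ref{Theorem_properties_convexity}$(vii)$, combined with the characterisation of minimisers in Theorem \ref{Theorem_properties_convexity}$(iii)$, shows that $0 \in \sum_{i}\partial f_i(x)$ forces $0 \in \partial(\sum_i f_i)(x)$, i.e. $x \in \arg\min \sum_i f_i$.

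The work lies in the reverse inclusion ``$\supseteq$'', for which the regularity condition (\ref{eq_regul_cond}) is exactly what is needed. Given $\overline{x} \in \arg\min \sum_i f_i$, Theorem \ref{Theorem_properties_convexity}$(iii)$ gives $0 \in \partial(\sum_i f_i)(\overline{x})$, and I must upgrade this to $0 \in \sum_i \partial f_i(\overline{x})$ through the $m$-fold sum rule $\partial(\sum_{i=1}^m f_i) = \sum_{i=1}^m \partial f_i$. The main obstacle is to deduce this $m$-term rule from the two-function version of Theorem \ref{Theorem_properties_convexity}$(vii)$, that is, to verify that the qualification (\ref{eq_regul_cond}) keeps being satisfied along the iteration. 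Taking $i_0 = m$ without loss of generality and fixing a point $\widehat{x} \in \mathrm{dom}f_m \cap \bigcap_{i \neq m}\mathrm{intdom}f_i$, I would add the summands one at a time through $g_k := f_1 + \cdots + f_k$. Because the interior of a finite intersection equals the intersection of the interiors, $\widehat{x} \in \mathrm{intdom}g_{k-1}$ and simultaneously $\widehat{x} \in \mathrm{intdom}f_k \subseteq \mathrm{dom}f_k$, so the hypothesis $(\mathrm{intdom}g_{k-1}) \cap \mathrm{dom}f_k \neq \emptyset$ of Theorem \ref{Theorem_properties_convexity}$(vii)$ is met at every step $k \leq m-1$; this chains to $\partial g_{m-1} = \sum_{i=1}^{m-1}\partial f_i$. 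A last application to $g_{m-1}$ and $f_m$, legitimate since $\widehat{x} \in (\mathrm{intdom}g_{m-1}) \cap \mathrm{dom}f_m$, gives the full sum rule. The one delicate point I would be careful about is that Theorem \ref{Theorem_properties_convexity}$(vii)$ delivers the equality on $\mathrm{dom}\partial(\cdot) \cap \mathrm{dom}\partial(\cdot)$; here it is harmless because, under the qualification, $0 \in \partial(\sum_i f_i)(\overline{x})$ already places $\overline{x}$ in the common domain of all the subdifferentials, so applying the rule at $\overline{x}$ returns $0 \in \sum_i \partial f_i(\overline{x})$ and closes the proof.
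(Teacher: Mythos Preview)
Your proposal is correct and follows essentially the same route as the paper: the proposition is presented there as a direct particularisation of Proposition~\ref{prop 2 param intersec} with $T_i=\partial f_i$, the paramonotonicity coming from Theorem~\ref{Theorem_properties_convexity}$(v)$ and the identification $\mathcal{A}=\arg\min\sum_i f_i$ from statements $(iii)$ and $(vii)$ of that theorem, exactly as you do. You merely spell out in more detail the inductive passage from the two-function sum rule to the $m$-function one and flag the domain issue in Theorem~\ref{Theorem_properties_convexity}$(vii)$, both of which the paper leaves implicit.
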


\begin{remark}
\emph{Proposition \ref{op const} can be applied to specific operators in
order to derive some classical statements which can be found in the
literature, as the one of \cite[Lemma 2]{BurkeFerris91} involving }$\partial
f\cap \left( -N_{S}\right) $\emph{, and regarding the optimization problem}%
\begin{equation*}
\begin{array}{ll}
\text{minimize } & f\left( x\right) \\ 
\text{ subject to } & x\in C,%
\end{array}%
\end{equation*}%
\emph{in the case when }$f:\mathbb{R}^{n}\rightarrow \left] -\infty ,\mathbb{%
+\infty }\right] $ \emph{is a proper lsc convex function and }$C$ \emph{is a
closed convex subset of }$\mathbb{R}^{n}.$ \emph{Specifically, if }$S$ \emph{%
denotes the set of optimal solutions of such a problem, \cite[Lemma 2]%
{BurkeFerris91} states that }$\partial f\left( x\right) \cap \left(
-N_{C}\left( x\right) \right) $\emph{\ is independent of }$x\in S.$ \emph{To
derive this statement from Proposition \ref{op const}, just observe that the
normal cone operator, }$N_{C}$ \emph{(recall (\ref{eq_normal_cone})), is
paramonotone as it coincides with the subdifferential of the indicator
function of }$C.$
\end{remark}

\begin{corollary}
\label{Corollary_diff}\label{grad const}Under the assumptions of Proposition %
\ref{op const}, one has:

$\left( i\right) $ If for some $j_{0}\in \{1,\ldots ,m\},$ the function $%
f_{j_{0}}$ is differentiable at $\overline{x}\in \mathcal{A},$ then $%
\widetilde{T}_{j_{0}}\left( x\right) =\left\{ \nabla f_{j_{0}}(\overline{x}%
)\right\} ,$ for all $x\in \mathcal{A}.$

$\left( ii\right) $ If for some $j_{0}\in \{1,\ldots ,m\},$ the function $%
f_{j_{0}}$ is differentiable on $\mathcal{A},$ then $\nabla f_{i_{0}}$ is
constant on $\mathcal{A}.$
\end{corollary}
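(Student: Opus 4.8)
The plan is to reduce the whole statement to two ingredients already in hand: the constancy of $\widetilde{T}_{j_0}$ on $\mathcal{A}$ (Proposition \ref{op const}), and the elementary convex-analytic fact, recalled in Section 2, that a convex function differentiable at a point has there the singleton subdifferential consisting of its gradient. For part $(i)$ I would fix $\overline{x}\in\mathcal{A}$ at which $f_{j_0}$ is differentiable. Straight from the definition $\widetilde{T}_{j_0}=\partial f_{j_0}\cap\left(-\sum_{j\neq j_0}\partial f_j\right)$ one has the inclusion $\widetilde{T}_{j_0}(\overline{x})\subseteq\partial f_{j_0}(\overline{x})$. Differentiability then gives $\partial f_{j_0}(\overline{x})=\{\nabla f_{j_0}(\overline{x})\}$, a singleton. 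On the other hand $\overline{x}$ lies in $\mathcal{A}$, which by Proposition \ref{prop 2 param intersec} is exactly the common domain $\dom\widetilde{T}_{j_0}$, so $\widetilde{T}_{j_0}(\overline{x})\neq\emptyset$. A nonempty subset of a singleton equals that singleton, whence $\widetilde{T}_{j_0}(\overline{x})=\{\nabla f_{j_0}(\overline{x})\}$.

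To finish $(i)$ I would invoke Proposition \ref{op const}: since $\widetilde{T}_{j_0}$ is constant on $\mathcal{A}$, its value at every $x\in\mathcal{A}$ equals its value at $\overline{x}$, which yields $\widetilde{T}_{j_0}(x)=\{\nabla f_{j_0}(\overline{x})\}$ for all $x\in\mathcal{A}$, as claimed.

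For part $(ii)$ I would apply $(i)$ twice. Fixing an arbitrary base point $\overline{x}\in\mathcal{A}$, differentiability of $f_{j_0}$ there lets $(i)$ give $\widetilde{T}_{j_0}(x)=\{\nabla f_{j_0}(\overline{x})\}$ for every $x\in\mathcal{A}$. Now taking any other $x\in\mathcal{A}$ and using that $f_{j_0}$ is differentiable throughout $\mathcal{A}$, a second application of $(i)$ with $x$ itself in the role of the base point gives $\widetilde{T}_{j_0}(x)=\{\nabla f_{j_0}(x)\}$. Comparing the two expressions for the singleton $\widetilde{T}_{j_0}(x)$ forces $\nabla f_{j_0}(x)=\nabla f_{j_0}(\overline{x})$; since $x$ ranges over all of $\mathcal{A}$, this shows $\nabla f_{j_0}$ is constant on $\mathcal{A}$.

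I do not expect a substantive obstacle: once constancy of $\widetilde{T}_{j_0}$ is available, the argument is a short bookkeeping exercise. The only point deserving care is the nonemptiness of $\widetilde{T}_{j_0}(\overline{x})$, which must be read off from the identification of $\mathcal{A}$ with the common domain $\dom\widetilde{T}_{j_0}$ in Proposition \ref{prop 2 param intersec}; without it, the inclusion of $\widetilde{T}_{j_0}(\overline{x})$ in a singleton would fail to pin down the value and the whole deduction would collapse.
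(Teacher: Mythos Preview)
Your proposal is correct and follows essentially the same approach as the paper: for $(i)$ you combine the inclusion $\widetilde{T}_{j_0}(\overline{x})\subseteq\partial f_{j_0}(\overline{x})=\{\nabla f_{j_0}(\overline{x})\}$ with nonemptiness (from $\overline{x}\in\mathcal{A}=\dom\widetilde{T}_{j_0}$) and then invoke constancy from Proposition~\ref{op const}, exactly as the paper does; for $(ii)$ the paper phrases the second step as the inclusion chain $\{\nabla f_{j_0}(\overline{x})\}=\widetilde{T}_{j_0}(x)\subset\partial f_{j_0}(x)=\{\nabla f_{j_0}(x)\}$ rather than calling it a ``second application of $(i)$'', but the content is identical.
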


\begin{proof}
$\left( i\right) $ follows straightforwardly from Proposition \ref{op const}%
, taking into account that if $f_{j_{0}}$ is differentiable at $\overline{x}%
, $ then $\widetilde{T}_{j_{0}}\left( \overline{x}\right) =\partial
f_{j_{0}}\left( \overline{x}\right) =\left\{ \nabla f_{j_{0}}(\overline{x}%
)\right\} $ (since $\emptyset \neq \widetilde{T}_{j_{0}}\left( \overline{x}%
\right) \subset \left\{ \nabla f_{j_{0}}(\overline{x})\right\} $)$,$ which
entails that $\widetilde{T}_{j_{0}}\left( x\right) =\widetilde{T}%
_{j_{0}}\left( \overline{x}\right) =\left\{ \nabla f_{j_{0}}(\overline{x}%
)\right\} $ whenever $x\in \mathcal{A}.$

$\left( ii\right) $ comes from $\left( i\right) $ since for every $\overline{%
x},x\in \mathcal{A}$ we have 
\begin{equation*}
\left\{ \nabla f_{j_{0}}(\overline{x})\right\} =\widetilde{T}_{j_{0}}\left(
x\right) \subset \partial f_{j_{0}}\left( x\right) =\left\{ \nabla
f_{j_{0}}(x)\right\} ;
\end{equation*}%
hence $\nabla f_{j_{0}}(\overline{x})=\nabla f_{j_{0}}(x).$
\end{proof}

\section{Simultaneous projections and displacement mappings}

This section is mainly devoted to study the minimal weighted distance to two
disjoint non-empty closed and convex subsets $S_{1}$ and $S_{2}$ of a
Hilbert space $X$. We will denote by $d:X\times X\rightarrow \mathbb{R}$ the 
\emph{distance function} on $X,$ i.e., $d\left( x,y\right) :=\left\Vert
x-y\right\Vert ,$ and by $d_{S_{i}}:X\rightarrow \mathbb{R}$ the distance
function to $S_{i},$ $i=1,2.$ We set%
\begin{equation*}
d\left( S_{1},S_{2}\right) :=\inf_{s_{1}\in S_{1},\text{ }s_{2}\in
S_{2}}d\left( s_{1},s_{2}\right) .
\end{equation*}%
For arbitrary real numbers $\alpha _{1},\alpha _{2}>0$, with $\alpha
_{1}+\alpha _{2}=1,$ and $p\geq 1,$ we define%
\begin{eqnarray}
v(\alpha _{1},\alpha _{2},p) &:&=\inf_{x\in X}\alpha _{1}d\left(
x,S_{1}\right) ^{p}+\alpha _{2}d\left( x,S_{2}\right) ^{p},  \notag \\
\mathcal{A}(\alpha _{1},\alpha _{2},p) &:&=\arg \min \alpha
_{1}d_{S_{1}}^{p}+\alpha _{2}d_{S_{2}}^{p}.  \label{eq def A}
\end{eqnarray}

Observe that $v(\alpha _{1},\alpha _{2},p)$ and $\mathcal{A}(\alpha
_{1},\alpha _{2},p)$ are the optimal value and the set of optimal solutions
of problem (\ref{eq_min_dist_k_weighted}) for the case of two sets.\textbf{\ 
}Notice that $\mathcal{A}(\alpha _{1},\alpha _{2},p)$ may be empty;
consider, e.g., the case when $X:=\mathbb{R}^{2},$ $S_{1}$ is the convex
hull of a branch of a hyperbola and $S_{2}$ is one of its asymptotes; in
this case\textbf{\ }$v(\alpha _{1},\alpha _{2},p)=0$ is not attained.

We denote by $P_{1}:=P_{S_{1}}$ and $P_{2}:=P_{S_{2}}$ the metric
projections over $S_{1}$ and $S_{2},$ respectively. We distinguish several
cases depending on the values of the power $p$ and parameters $\alpha _{1}$
and $\alpha _{2}$. At this moment we advance that in the case when $%
d_{S_{1}}^{p}$ and $d_{S_{2}}^{p}$ are differentiable we are able to apply
Corollary \ref{Corollary_diff} to derive information about $\mathcal{A}%
(\alpha _{1},\alpha _{2},p).$ Going further, Proposition \ref{Prop_dist_diff}%
$\left( i\right) $ establishes the differentiability of $d_{S_{i}}^{p}$ on
the whole space $X$ when\textbf{\ }$p\geq 2,$ which allows us to tackle the
case of a finite amount of sets$.$

\bigskip

\textit{Case 1.\qquad }$p:=1,$ $\alpha _{1}\neq \alpha _{2}.$

Without loss of generality, we assume that $\alpha _{1}>\alpha _{2}.$ The
following result has a clear geometrical meaning according to Proposition %
\ref{equiv}.

\begin{proposition}
\label{k=1 alphas dif}If $\alpha _{1}>\alpha _{2},$ then $\mathcal{A}(\alpha
_{1},\alpha _{2},1)=\arg \min_{S_{1}}d_{S_{2}}.$
\end{proposition}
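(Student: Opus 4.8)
The plan is to prove the set identity directly, by bounding the objective $f:=\alpha_1 d_{S_1}+\alpha_2 d_{S_2}$ from below and then reading off exactly when the bound is attained. The two ingredients I would use are that $d_{S_2}$ is $1$-Lipschitz (being a distance function to a nonempty set) and that, in the Hilbert space $X$, every point $x$ has a well-defined best approximation $p:=P_1(x)\in S_1$, so that $d_{S_1}(x)=\|x-p\|$. The hypothesis $\alpha_1>\alpha_2$ will enter decisively through the strict positivity of the coefficient $\alpha_1-\alpha_2$.

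First I would fix an arbitrary $x\in X$ and, using the $1$-Lipschitz property of $d_{S_2}$ together with $p\in S_1$, estimate $d_{S_2}(x)\geq d_{S_2}(p)-\|x-p\|$. Substituting and rearranging gives
\[f(x)\geq \alpha_1\|x-p\|+\alpha_2\big(d_{S_2}(p)-\|x-p\|\big)=(\alpha_1-\alpha_2)\|x-p\|+\alpha_2\,d_{S_2}(p).\]
Since $\alpha_1>\alpha_2$ and $p\in S_1$, the right-hand side is at least $\alpha_2\inf_{S_1}d_{S_2}$, so $f(x)\geq\alpha_2\inf_{S_1}d_{S_2}$ for every $x$. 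Comparing with the trivial bound $\inf_X f\leq\inf_{S_1}f=\alpha_2\inf_{S_1}d_{S_2}$ (valid because $d_{S_1}$ vanishes on $S_1$) shows that $\inf_X f=\alpha_2\inf_{S_1}d_{S_2}$.

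It remains to identify the minimizers. Tracing the equality cases in the displayed chain, $f(x)$ equals the optimal value exactly when $(\alpha_1-\alpha_2)\|x-p\|=0$ and $d_{S_2}(p)=\inf_{S_1}d_{S_2}$. As $\alpha_1>\alpha_2$, the first condition forces $x=p\in S_1$, and the second then reads $d_{S_2}(x)=\inf_{S_1}d_{S_2}$; conversely, any $x\in S_1$ minimizing $d_{S_2}$ over $S_1$ clearly attains the optimal value. Hence $x\in\mathcal{A}(\alpha_1,\alpha_2,1)$ if and only if $x\in\arg\min_{S_1}d_{S_2}$, which is the claimed equality. Note this argument makes no attainment assumption, so both sides are empty precisely in the same situation (when the infimum of $d_{S_2}$ over $S_1$ is not reached, as in the hyperbola/asymptote example), and the identity holds in every case.

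The computation is elementary; the only genuinely delicate point is to keep the equality analysis tight and to invoke $\alpha_1>\alpha_2$ at the right place, since this is exactly what rules out minimizers lying outside $S_1$. As an alternative consistent with Section 3, one could set $f_i:=\alpha_i d_{S_i}$ and observe that a minimizer $\bar x$ satisfies $0\in\partial f_1(\bar x)+\partial f_2(\bar x)$, i.e.\ $u_1+u_2=0$ with $u_i\in\partial f_i(\bar x)$; since by Proposition \ref{Prop_dist_diff}$(ii)$ every element of $\partial d_{S_i}$ lies in $B^\ast$, we get $\|u_1\|=\|u_2\|\leq\alpha_2<\alpha_1$, whereas $\bar x\notin S_1$ would force $\|u_1\|=\alpha_1$. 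This again locates the minimizers in $S_1$, but because $d_{S_i}$ is nonsmooth when $p=1$, Corollary \ref{Corollary_diff} does not apply directly, making the direct estimate the cleaner route.
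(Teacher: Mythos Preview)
Your proof is correct and is essentially the same argument as the paper's, just packaged differently. Both hinge on the single estimate obtained by projecting onto $S_1$ and applying the $1$-Lipschitz property of $d_{S_2}$: your inequality $f(x)\ge(\alpha_1-\alpha_2)\|x-P_1(x)\|+\alpha_2\,d_{S_2}(P_1(x))$ is precisely the paper's comparison $\alpha_1 d_{S_1}(P_1(x))+\alpha_2 d_{S_2}(P_1(x))\le\alpha_1 d_{S_1}(x)+\alpha_2 d_{S_2}(x)$, rearranged. The paper then argues by double inclusion (first $\mathcal A\subset S_1$, then $\mathcal A=\arg\min_{S_1}d_{S_2}$), whereas you compute the optimal value $\alpha_2\inf_{S_1}d_{S_2}$ explicitly and read off the equality cases in one pass; this is a minor presentational difference, not a different method. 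Your closing subdifferential remark (forcing $\|u_1\|\le\alpha_2<\alpha_1$, which rules out $\bar x\notin S_1$) is a nice alternative route to $\mathcal A\subset S_1$ that the paper does not mention.
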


\begin{proof}
We start by proving that every $x\in X$ satisfies a useful inequality:%
\begin{eqnarray*}
\alpha _{1}d\left( P_{1}\left( x\right) ,S_{1}\right) +\alpha _{2}d\left(
P_{1}\left( x\right) ,S_{2}\right) &=&\alpha _{2}d\left( P_{1}\left(
x\right) ,S_{2}\right) \\
&\leq &\alpha _{2}\left( \left\Vert P_{1}\left( x\right) -x\right\Vert
+d\left( x,S_{2}\right) \right) \\
&=&\alpha _{2}\left( d\left( x,S_{1}\right) +d\left( x,S_{2}\right) \right)
\\
&\leq &\alpha _{1}d\left( x,S_{1}\right) +\alpha _{2}d\left( x,S_{2}\right) .
\end{eqnarray*}%
Since the latter inequality is strict when $x\notin S_{1},$ it follows that $%
\mathcal{A}(\alpha _{1},\alpha _{2},1)\subset S_{1}.$ To prove the inclusion 
$\mathcal{A}(\alpha _{1},\alpha _{2},1)\subset \arg \min_{S_{1}}d_{S_{2}},$
let $\overline{x}\in \mathcal{A}(\alpha _{1},\alpha _{2},1)$ and $x\in
S_{1}. $ Since $\overline{x}\in S_{1},$ we have%
\begin{equation*}
\alpha _{2}d\left( \overline{x},S_{2}\right) =\alpha _{1}d\left( \overline{x}%
,S_{1}\right) +\alpha _{2}d\left( \overline{x},S_{2}\right) \leq \alpha
_{1}d\left( x,S_{1}\right) +\alpha _{2}d\left( x,S_{2}\right) =\alpha
_{2}d\left( x,S_{2}\right) ,
\end{equation*}%
which shows that $\overline{x}\in \arg \min_{S_{1}}d_{S_{2}},$ thus proving
the desired inclusion. For the opposite inclusion, let $\overline{x}\in \arg
\min_{S_{1}}d_{S_{2}}$ and $x\in X.$ Then%
\begin{eqnarray*}
\alpha _{1}d\left( \overline{x},S_{1}\right) +\alpha _{2}d\left( \overline{x}%
,S_{2}\right) &=&\alpha _{2}d\left( \overline{x},S_{2}\right) \leq \alpha
_{2}d\left( P_{1}\left( x\right) ,S_{2}\right) \\
&=&\alpha _{1}d\left( P_{1}\left( x\right) ,S_{1}\right) +\alpha _{2}d\left(
P_{1}\left( x\right) ,S_{2}\right) \\
&\leq &\alpha _{1}d\left( x,S_{1}\right) +\alpha _{2}d\left( x,S_{2}\right) ,
\end{eqnarray*}%
which implies that $\overline{x}\in \mathcal{A}(\alpha _{1},\alpha _{2},1).$
Therefore $\arg \min_{S_{1}}d_{S_{2}}\subset \mathcal{A}(\alpha _{1},\alpha
_{2},1),$ so the equality in the statement is proved.
\end{proof}

\bigskip

In the following corollary, $\Pi _{1}:S_{1}\times S_{2}\rightarrow S_{1}$
denotes de projection mapping, defined by $\Pi _{1}\left( s_{1},s_{2}\right)
=s_{1}.$

\begin{corollary}
If $\alpha _{1}>\alpha _{2},$ then $\mathcal{A}(\alpha _{1},\alpha
_{2},1)=\Pi _{1}\left( \arg \min_{S_{1}\times S_{2}}d\right) $
\end{corollary}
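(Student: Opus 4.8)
The plan is to combine Proposition \ref{k=1 alphas dif} with a direct analysis of the two-set distance. Since Proposition \ref{k=1 alphas dif} already gives $\mathcal{A}(\alpha_1,\alpha_2,1)=\arg\min_{S_1}d_{S_2}$ whenever $\alpha_1>\alpha_2$, the corollary reduces to establishing the set identity
\[
\arg\min_{S_1}d_{S_2}=\Pi_1\left(\arg\min_{S_1\times S_2}d\right).
\]
I would prove this by the two inclusions, working element by element so that the possibly-empty cases are handled automatically. The guiding observation is that $d(S_1,S_2)=\inf_{s_1\in S_1}d(s_1,S_2)$, so both sides concern points (resp. pairs) attaining the common value $d(S_1,S_2)$.

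First I would prove $\Pi_1\left(\arg\min_{S_1\times S_2}d\right)\subset\arg\min_{S_1}d_{S_2}$. Take $(s_1,s_2)\in\arg\min_{S_1\times S_2}d$, so $\left\Vert s_1-s_2\right\Vert=d(S_1,S_2)$. On the one hand $d(s_1,S_2)\leq\left\Vert s_1-s_2\right\Vert=d(S_1,S_2)$ because $s_2\in S_2$; on the other hand $d(s_1,S_2)=\inf_{s\in S_2}\left\Vert s_1-s\right\Vert\geq d(S_1,S_2)$ since $s_1\in S_1$. Hence $d(s_1,S_2)=d(S_1,S_2)=\min_{S_1}d_{S_2}$, so $s_1=\Pi_1(s_1,s_2)\in\arg\min_{S_1}d_{S_2}$.

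For the reverse inclusion $\arg\min_{S_1}d_{S_2}\subset\Pi_1\left(\arg\min_{S_1\times S_2}d\right)$, this is where the Hilbert-space structure enters and is the only genuinely nontrivial step. Let $s_1\in\arg\min_{S_1}d_{S_2}$, so that $d(s_1,S_2)=d(S_1,S_2)$. Because $X$ is a Hilbert space and $S_2$ is a nonempty closed convex set, the metric projection $P_2(s_1)=P_{S_2}(s_1)$ exists, lies in $S_2$, and satisfies $\left\Vert s_1-P_2(s_1)\right\Vert=d(s_1,S_2)=d(S_1,S_2)$. Setting $s_2:=P_2(s_1)$, the pair $(s_1,s_2)\in S_1\times S_2$ realizes the infimum $d(S_1,S_2)$, hence $(s_1,s_2)\in\arg\min_{S_1\times S_2}d$ and $s_1=\Pi_1(s_1,s_2)$ belongs to the image. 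Combining both inclusions yields the claimed equality, and appealing once more to Proposition \ref{k=1 alphas dif} completes the proof. The main obstacle is precisely the construction of the second coordinate in the reverse inclusion: it requires that the minimizing point of $S_1$ can be paired with an honest minimizer in $S_2$, which is guaranteed exactly by attainment and uniqueness of best approximations to closed convex sets in Hilbert space.
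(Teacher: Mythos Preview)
Your proof is correct and follows essentially the same approach as the paper: both reduce via Proposition~\ref{k=1 alphas dif} to the identity $\arg\min_{S_1}d_{S_2}=\Pi_1(\arg\min_{S_1\times S_2}d)$, establish it by double inclusion, and use the projection $P_2(s_1)$ as the witness for the inclusion $\arg\min_{S_1}d_{S_2}\subset\Pi_1(\arg\min_{S_1\times S_2}d)$. The only differences are cosmetic (the order of the two inclusions is swapped, and you phrase the argument in terms of the common value $d(S_1,S_2)$ rather than by direct comparison with arbitrary competitors).
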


\begin{proof}
Taking into account Proposition \ref{k=1 alphas dif}, we will actually prove
the equivalent equality $\arg \min_{S_{1}}d_{S_{2}}=\Pi _{1}\left( \arg
\min_{S_{1}\times S_{2}}d\right) .$ To prove the inclusion $\subset ,$ let $%
\overline{s}_{1}\in \arg \min_{S_{1}}d_{S_{2}}$ and $s_{1}\in S_{1}.$ Then,
for every $s_{2}\in S_{2},$ we have%
\begin{equation*}
d\left( \overline{s}_{1},P_{2}\left( \overline{s}_{1}\right) \right)
=d\left( \overline{s}_{1},S_{2}\right) \leq d\left( s_{1},S_{2}\right) \leq
d\left( s_{1},s_{2}\right) ;
\end{equation*}%
hence $\left( \overline{s}_{1},P_{2}\left( \overline{s}_{1}\right) \right)
\in \arg \min_{S_{1}\times S_{2}}d,$ implying that $\overline{s}_{1}\in \Pi
_{1}\left( \arg \min_{S_{1}\times S_{2}}d\right) ,$ thus proving the desired
inclusion. We now proceed to prove the opposite inclusion. Let $\overline{s}%
_{1}\in \Pi _{1}\left( \arg \min_{S_{1}\times S_{2}}d\right) $ and $s_{1}\in
S_{1}.$ There exists $\overline{s}_{2}\in S_{2}$ such that $\left( \overline{%
s}_{1},\overline{s}_{2}\right) \in \arg \min_{S_{1}\times S_{2}}d,$ and for
every $s_{2}\in S_{2}$ we have%
\begin{equation*}
d\left( \overline{s}_{1},S_{2}\right) \leq d\left( \overline{s}_{1},%
\overline{s}_{2}\right) \leq d\left( s_{1},s_{2}\right) ;
\end{equation*}%
taking infimum over $s_{2}\in S_{2},$ this yields $d\left( \overline{s}%
_{1},S_{2}\right) \leq d\left( s_{1},S_{2}\right) ,$ which implies that $%
\overline{s}_{1}\in \arg \min_{S_{1}}d_{S_{2}}.$ Thus $\Pi _{1}\left( \arg
\min_{S_{1}\times S_{2}}d\right) \subset \arg \min_{S_{1}}d_{S_{2}},$ and
the proof is complete.
\end{proof}

\bigskip

\textit{Case 2.\qquad }$p=1,$ $\alpha _{1}=\alpha _{2}=\frac{1}{2}.$ From
now on $\left] P_{1}\left( x\right) ,P_{2}\left( x\right) \right[ $
represents the segment of points between $P_{1}\left( x\right) $ and $%
P_{2}\left( x\right) ,$ except these two ones; i.e.,\textbf{\ }$\left]
P_{1}\left( x\right) ,P_{2}\left( x\right) \right[ :=\left\{ \left(
1-\lambda \right) P_{1}\left( x\right) +\lambda P_{2}\left( x\right)
:0<\lambda <1\right\} .$

\begin{proposition}
One has:

$\left( i\right) $ $v(\frac{1}{2},\frac{1}{2},1)=\frac{1}{2}d\left(
S_{1},S_{2}\right) ,\medskip $

$\left( ii\right) $ $\mathcal{A}(\frac{1}{2},\frac{1}{2},1)=\left\{ x\in
X:x\in \left] P_{1}\left( x\right) ,P_{2}\left( x\right) \right[ \right\}
\cup \arg \min_{S_{1}}d_{S_{2}}\cup \arg \min_{S_{2}}d_{S_{1}}.$
\end{proposition}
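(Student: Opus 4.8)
The plan is to analyze the objective $\frac{1}{2}(d_{S_1}+d_{S_2})$ and exploit the triangle-inequality bound $d(S_1,S_2)\le d(x,S_1)+d(x,S_2)$, which holds for every $x\in X$ since for any $s_1\in S_1$, $s_2\in S_2$ one has $d(S_1,S_2)\le\|s_1-s_2\|\le\|s_1-x\|+\|x-s_2\|$, and taking projections $s_i=P_i(x)$ gives the sharpest form. This immediately yields $v(\frac12,\frac12,1)\ge\frac12 d(S_1,S_2)$, so the content of $(i)$ is the reverse inequality, i.e. that the value $\frac12 d(S_1,S_2)$ is approached. I would obtain this by taking near-optimal pairs $s_1^k\in S_1$, $s_2^k\in S_2$ with $\|s_1^k-s_2^k\|\to d(S_1,S_2)$ and evaluating the objective along, say, the midpoints $x^k:=\tfrac12(s_1^k+s_2^k)$, using $d(x^k,S_i)\le\|x^k-s_i^k\|=\tfrac12\|s_1^k-s_2^k\|$. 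This establishes $(i)$.

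\textbf{Characterizing the minimizers.} For $(ii)$ the key observation is that $\overline{x}$ is optimal if and only if $\tfrac12(d(\overline{x},S_1)+d(\overline{x},S_2))=\tfrac12 d(S_1,S_2)$, i.e. equality holds in the chain $d(S_1,S_2)\le\|P_1(\overline{x})-\overline{x}\|+\|\overline{x}-P_2(\overline{x})\|$ together with $\|P_1(\overline{x})-P_2(\overline{x})\|\ge d(S_1,S_2)$. Thus $\overline{x}$ minimizes precisely when
\begin{equation*}
\|P_1(\overline{x})-\overline{x}\|+\|\overline{x}-P_2(\overline{x})\|=\|P_1(\overline{x})-P_2(\overline{x})\|=d(S_1,S_2).
\end{equation*}
The first equality forces $\overline{x}$ to lie on the closed segment $[P_1(\overline{x}),P_2(\overline{x})]$ (the triangle inequality $\|a-x\|+\|x-b\|\ge\|a-b\|$ is an equality in a Hilbert space exactly when $x$ lies on the segment $[a,b]$). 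The second equality forces the pair $(P_1(\overline{x}),P_2(\overline{x}))$ to realize $d(S_1,S_2)$. I would then split according to where $\overline{x}$ sits on that segment: if $\overline{x}$ is interior, it lies in $\left]P_1(\overline{x}),P_2(\overline{x})\right[$, the first set in the union; if $\overline{x}=P_1(\overline{x})$ then $\overline{x}\in S_1$ and the realizing condition says $d(\overline{x},S_2)=d(S_1,S_2)=\min_{S_1}d_{S_2}$, placing $\overline{x}\in\arg\min_{S_1}d_{S_2}$; the symmetric case $\overline{x}=P_2(\overline{x})$ gives $\arg\min_{S_2}d_{S_1}$. This yields the inclusion $\subset$.

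\textbf{The reverse inclusion and the main obstacle.} For $\supset$ I would verify each piece achieves the value $\frac12 d(S_1,S_2)$. If $x\in\left]P_1(x),P_2(x)\right[$, then $d(x,S_1)+d(x,S_2)\le\|x-P_1(x)\|+\|x-P_2(x)\|=\|P_1(x)-P_2(x)\|$, and I must argue this last quantity equals $d(S_1,S_2)$; combined with the universal lower bound $d(S_1,S_2)\le d(x,S_1)+d(x,S_2)$ this pins the value down. For $x\in\arg\min_{S_1}d_{S_2}$ one has $d(x,S_1)=0$ and $d(x,S_2)=\min_{S_1}d_{S_2}$, which I need to identify with $d(S_1,S_2)$; here $\min_{S_1}d_{S_2}=\inf_{s_1\in S_1}d(s_1,S_2)=d(S_1,S_2)$ is immediate from the definitions. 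The delicate point in the interior-segment case is confirming that membership in the open segment genuinely forces $\|P_1(x)-P_2(x)\|=d(S_1,S_2)$ rather than merely an upper bound; I expect the cleanest route is to note that for such $x$ the projection relations give $P_1(x)$ and $P_2(x)$ as nearest points whose connecting segment passes through $x$, and to combine the two triangle inequalities so that equality propagates throughout the chain, forcing the pair to be distance-realizing. That verification — ensuring the open-segment condition is not just necessary but sufficient for optimality — is the main obstacle, and I would handle it by showing the objective at such $x$ is squeezed between $\frac12 d(S_1,S_2)$ from below and $\frac12\|P_1(x)-P_2(x)\|$ from above while the segment condition collapses the gap.
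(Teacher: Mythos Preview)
Your treatment of $(i)$ and of the inclusion $\subset$ in $(ii)$ is correct and essentially the same as the paper's (you use midpoints of near-optimal pairs where the paper evaluates at $s_1$, but this is cosmetic). The gap is in the reverse inclusion $\supset$ for the open-segment piece, precisely at the point you flag as the main obstacle: your proposed ``squeezing'' does not close.

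Concretely, for $x\in\left]P_1(x),P_2(x)\right[$ you have
\[
\tfrac{1}{2}\,d(S_1,S_2)\ \le\ \tfrac{1}{2}\bigl(d(x,S_1)+d(x,S_2)\bigr)\ =\ \tfrac{1}{2}\,\|P_1(x)-P_2(x)\|,
\]
but this chain only says $d(S_1,S_2)\le\|P_1(x)-P_2(x)\|$, which is already trivial since $P_i(x)\in S_i$. Nothing here forces equality; the ``two triangle inequalities'' both point the same way, so no squeeze occurs. You still owe the inequality $\|P_1(x)-P_2(x)\|\le\|s_1-s_2\|$ for arbitrary $s_1\in S_1,\ s_2\in S_2$, and that cannot be extracted from the metric triangle inequality alone.

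The paper supplies exactly this missing step via the variational (obtuse-angle) characterization of the Hilbert projection: from $x=(1-\lambda)P_1(x)+\lambda P_2(x)$ with $\lambda\in\,]0,1[$ one gets $x-P_i(x)$ parallel to $P_2(x)-P_1(x)$, so the inequalities $\langle s_i-P_i(x),\,x-P_i(x)\rangle\le 0$ become $\langle s_1-P_1(x),\,P_2(x)-P_1(x)\rangle\le 0$ and $\langle s_2-P_2(x),\,P_1(x)-P_2(x)\rangle\le 0$. Adding these yields $\|P_1(x)-P_2(x)\|^{2}\le\langle s_1-s_2,\,P_1(x)-P_2(x)\rangle\le\|s_1-s_2\|\,\|P_1(x)-P_2(x)\|$, hence $\|P_1(x)-P_2(x)\|\le\|s_1-s_2\|$. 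This is a genuinely inner-product argument; once you insert it, your outline becomes a complete proof.
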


\begin{proof}
$\left( i\right) $ For $x\in X,$ we have%
\begin{eqnarray*}
d\left( x,S_{1}\right) +d\left( x,S_{2}\right) &=&\left\Vert
x-P_{1}(x)\right\Vert +\left\Vert x-P_{2}(x)\right\Vert \geq \left\Vert
P_{1}(x)-P_{2}(x)\right\Vert \\
&\geq &d\left( S_{1},S_{2}\right) ,
\end{eqnarray*}%
which proves the inequality $\geq $. To prove the opposite inequality, it
suffices to observe that, for $s_{1}\in S_{1}$ and $s_{2}\in S_{2},$ we have%
\begin{eqnarray*}
\left\Vert s_{1}-s_{2}\right\Vert &=&d\left( s_{1},S_{1}\right) +\left\Vert
s_{1}-s_{2}\right\Vert \geq d\left( s_{1},S_{1}\right) +d\left(
s_{1},S_{2}\right) \\
&\geq &2v(\tfrac{1}{2},\tfrac{1}{2},1).
\end{eqnarray*}

$\left( ii\right) $ Let $x\in \mathcal{A}(\frac{1}{2},\frac{1}{2},1).$ If $%
x\notin S_{1}\cup S_{2},$ then $x\in \left] P_{1}\left( x\right)
,P_{2}\left( x\right) \right[ ,$ since otherwise we would have%
\begin{eqnarray*}
d\left( x,S_{1}\right) +d\left( x,S_{2}\right) &=&\left\Vert x-P_{1}\left(
x\right) \right\Vert +\left\Vert x-P_{2}\left( x\right) \right\Vert
>\left\Vert P_{1}\left( x\right) -P_{2}\left( x\right) \right\Vert \\
&\geq &d\left( S_{1},S_{2}\right) ,
\end{eqnarray*}%
a contradiction with $\left( i\right) $. If $x\in S_{1},$ then, for any%
\textbf{\ }$s_{1}\in S_{1}$\ we have%
\begin{equation*}
d\left( x,S_{2}\right) =d\left( x,S_{1}\right) +d\left( x,S_{2}\right) \leq
d\left( s_{1},S_{1}\right) +d\left( s_{1},S_{2}\right) =d\left(
s_{1},S_{2}\right) ,
\end{equation*}%
which shows that $x\in \arg \min_{S_{1}}d_{S_{2}}.$ In the same way, if $%
x\in S_{2},$ then $x\in \arg \min_{S_{2}}d_{S_{1}}.$ We have thus proved the
inclusion $\subset $. To prove the opposite inclusion, let $x\in X$ be such
that $x\in \left] P_{1}\left( x\right) ,P_{2}\left( x\right) \right[ $ and
take $\lambda \in \left] 0,1\right[ $ such that $x=(1-\lambda
)P_{1}(x)+\lambda P_{2}(x)$. Combining this equality with the inequalities $%
\left\langle s_{i}-P_{i}(x),x-P_{i}(x)\right\rangle \leq 0,$ which hold for
every $s_{i}\in S_{i},$ we obtain $\left\langle
s_{1}-P_{1}(x),P_{2}(x)-P_{1}(x)\right\rangle \leq 0$ and $\left\langle
s_{2}-P_{2}(x),P_{1}(x)-P_{2}(x)\right\rangle \leq 0.$ Adding the latter
inequalities, we get $\left\langle
s_{2}-s_{1}+P_{1}(x)-P_{2}(x),P_{1}(x)-P_{2}(x)\right\rangle \leq 0$;\ hence%
\begin{equation*}
\left\Vert P_{1}(x)-P_{2}(x)\right\Vert ^{2}\leq \left\langle
s_{1}-s_{2},P_{1}(x)-P_{2}(x)\right\rangle \leq \left\Vert
s_{1}-s_{2}\right\Vert \left\Vert P_{1}(x)-P_{2}(x)\right\Vert .
\end{equation*}%
Therefore, $\left\Vert s_{1}-s_{2}\right\Vert \geq \left\Vert
P_{1}(x)-P_{2}(x)\right\Vert $, and we deduce that%
\begin{equation*}
d\left( x,S_{1}\right) +d\left( x,S_{2}\right) =\left\Vert x-P_{1}\left(
x\right) \right\Vert +\left\Vert x-P_{2}\left( x\right) \right\Vert
=\left\Vert P_{1}(x)-P_{2}(x)\right\Vert \leq \left\Vert
s_{1}-s_{2}\right\Vert .
\end{equation*}%
Since $s_{i}\in S_{i},$ $i=1,2,$ are arbitrarily chosen, we conclude\textbf{%
\ }%
\begin{equation*}
d\left( x,S_{1}\right) +d\left( x,S_{2}\right) \leq d\left(
S_{1},S_{2}\right) ,
\end{equation*}%
which, by $\left( i\right) $, says that $x\in \mathcal{A}(\frac{1}{2},\frac{1%
}{2},1).$ It remains to prove that%
\begin{equation*}
\arg \min\nolimits_{S_{1}}d_{S_{2}}\cup \arg
\min\nolimits_{S_{2}}d_{S_{1}}\subset \mathcal{A}(\tfrac{1}{2},\tfrac{1}{2}%
,1).
\end{equation*}%
For symmetry reasons, it suffices to prove that $\arg
\min_{S_{1}}d_{S_{2}}\subset \mathcal{A}(\frac{1}{2},\frac{1}{2},1),$ but
this inclusion follows from the fact that, for $x\in \arg
\min_{S_{1}}d_{S_{2}},$ we have%
\begin{eqnarray*}
d\left( x,S_{1}\right) +d\left( x,S_{2}\right) &=&d\left( x,S_{2}\right)
=\min_{s_{1}\in S_{1}}d\left( s_{1},S_{2}\right) =\min_{s_{1}\in
S_{1}}\min_{s_{2}\in S_{2}}d\left( s_{1},s_{2}\right) \\
&=&\min_{s_{1}\in S_{1},\text{ }s_{2}\in S_{2}}d\left( s_{1},s_{2}\right)
=d\left( S_{1},S_{2}\right) .
\end{eqnarray*}
\end{proof}

\bigskip

\textit{Case 3.\qquad }$p>1.$

In our current setting, it is known that function $d_{S_{i}}:X\rightarrow 
\mathbb{R}$ is convex and differentiable outside $S_{i}$ and for every $x\in
X\setminus S_{i}$ one has (recall Proposition \ref{Prop_dist_diff}$\left(
ii\right) $) 
\begin{equation}
\nabla d_{S_{i}}\left( x\right) =\left( x-P_{i}(x)\right) /\left\Vert
x-P_{i}(x)\right\Vert .  \label{grad}
\end{equation}

\begin{theorem}
\label{A = fixed points}If $p>1,$ then:

$\left( i\right) $ $\mathcal{A}(\alpha _{1},\alpha _{2},p)\cap \left(
S_{1}\cup S_{2}\right) =\emptyset .$

$\left( ii\right) $ For each\textbf{\ }$i=1,2,$ function $d_{S_{i}}^{p}$ is
differentiable in $\mathcal{A}(\alpha _{1},\alpha _{2},p)$ and 
\begin{equation*}
\nabla d_{S_{i}}^{p}\left( x\right) =p\left\Vert x-P_{i}(x)\right\Vert
^{p-2}\left( x-P_{i}(x)\right) ,x\in \mathcal{A}(\alpha _{1},\alpha _{2},p).
\end{equation*}

$\left( iii\right) $ $\mathcal{A}(\alpha _{1},\alpha _{2},p)$ coincides with
the set of fixed points of 
\begin{equation*}
\frac{\alpha _{1}^{\frac{1}{p-1}}}{\alpha _{1}^{\frac{1}{p-1}}+\alpha _{2}^{%
\frac{1}{p-1}}}P_{1}+\frac{\alpha _{2}^{\frac{1}{p-1}}}{\alpha _{1}^{\frac{1%
}{p-1}}+\alpha _{2}^{\frac{1}{p-1}}}P_{2}.
\end{equation*}
\end{theorem}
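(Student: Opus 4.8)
The plan is to prove the three statements in sequence, using the earlier machinery on distance functions and the characterization of optimal-solution sets via subdifferentials.

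For $(i)$, I would argue by contradiction using the differentiability properties. Suppose $\ox \in \mathcal{A}(\alpha_1,\alpha_2,p) \cap S_1$ (the case $S_2$ being symmetric). The objective $\varphi := \alpha_1 d_{S_1}^p + \alpha_2 d_{S_2}^p$ attains its minimum at $\ox$, so $0 \in \partial \varphi(\ox)$. Since $\ox \in S_1$, we have $d_{S_1}(\ox)=0$, and because $p>1$ the function $d_{S_1}^p$ has a vanishing subgradient contribution at points of $S_1$ (the chain rule for $t \mapsto t^p$ kills the $N_{S_1} \cap B^*$ part from Proposition \ref{Prop_dist_diff}$(ii)$, as $p t^{p-1}=0$ at $t=0$), so $\partial d_{S_1}^p(\ox)=\{0\}$. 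Hence optimality forces $\alpha_2 \nabla d_{S_2}^p(\ox)=0$, i.e. $d_{S_2}^p$ has a critical point at $\ox$. But $\ox \notin S_2$ (otherwise $S_1 \cap S_2 \neq \emptyset$, contradicting disjointness), so by \eqref{grad} the gradient $\nabla d_{S_2}^p(\ox)= p\|\ox-P_2(\ox)\|^{p-2}(\ox-P_2(\ox)) \neq 0$, a contradiction. The disjointness hypothesis $S_1 \cap S_2 = \emptyset$ is what I expect to invoke here.

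For $(ii)$, once $(i)$ guarantees $\mathcal{A}(\alpha_1,\alpha_2,p) \cap (S_1 \cup S_2)=\emptyset$, every $x$ in $\mathcal{A}(\alpha_1,\alpha_2,p)$ lies outside both $S_i$, where $d_{S_i}$ is differentiable with gradient given by \eqref{grad}. Applying the chain rule to $d_{S_i}^p = (d_{S_i})^p$ at such $x$ (where $d_{S_i}(x)=\|x-P_i(x)\|>0$), I get $\nabla d_{S_i}^p(x)= p\, d_{S_i}(x)^{p-1} \nabla d_{S_i}(x)= p\|x-P_i(x)\|^{p-1} \cdot \|x-P_i(x)\|^{-1}(x-P_i(x))$, which simplifies to the stated formula $p\|x-P_i(x)\|^{p-2}(x-P_i(x))$. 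This step is essentially a routine computation.

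For $(iii)$, the idea is that $x \in \mathcal{A}(\alpha_1,\alpha_2,p)$ iff $0 \in \partial \varphi(x)$, and by $(i)$–$(ii)$ together with the sum rule (Theorem \ref{Theorem_properties_convexity}$(vii)$, whose regularity condition holds since $d_{S_i}^p$ is everywhere finite and continuous) this reads $\alpha_1 \nabla d_{S_1}^p(x)+\alpha_2 \nabla d_{S_2}^p(x)=0$. Substituting the formula from $(ii)$ gives
\begin{equation*}
\alpha_1 \|x-P_1(x)\|^{p-2}(x-P_1(x)) + \alpha_2 \|x-P_2(x)\|^{p-2}(x-P_2(x)) = 0.
\end{equation*}
The main obstacle, and the interesting part of the argument, is converting this balance equation into the asserted fixed-point form. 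I would solve for $x$ by treating it as an affine combination of $P_1(x)$ and $P_2(x)$: writing $x$ on the left using $\alpha_1\|x-P_1(x)\|^{p-2}+\alpha_2\|x-P_2(x)\|^{p-2}$ as the common coefficient, the equation rearranges to $x$ being a convex combination of $P_1(x)$ and $P_2(x)$ with weights proportional to $\alpha_i\|x-P_i(x)\|^{p-2}$. The delicate point is identifying these weights with the constants $\alpha_i^{1/(p-1)}/(\alpha_1^{1/(p-1)}+\alpha_2^{1/(p-1)})$; this requires observing that $x$ lies on the segment $[P_1(x),P_2(x)]$ so that $\|x-P_1(x)\|$ and $\|x-P_2(x)\|$ are proportional to the two weights themselves, yielding a self-consistent relation $\alpha_1\|x-P_1(x)\|^{p-1}=\alpha_2\|x-P_2(x)\|^{p-1}$ from collinearity. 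Solving this for the ratio $\|x-P_1(x)\|/\|x-P_2(x)\|=(\alpha_2/\alpha_1)^{1/(p-1)}$ and back-substituting produces exactly the stated coefficients, establishing the fixed-point characterization.
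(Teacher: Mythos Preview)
Your approach to $(i)$ differs from the paper's: the paper exhibits, for each $x\in S_1$, a point $(1-\lambda)x+\lambda P_2(x)$ with strictly smaller objective, whereas you argue via first-order optimality. Your route is valid, but the claim $\partial d_{S_1}^p(\ox)=\{0\}$ for $\ox\in S_1$ deserves a cleaner justification than an informal chain rule on $N_{S_1}(\ox)\cap B^\ast$: since $d_{S_1}$ is $1$-Lipschitz and vanishes at $\ox$, one has $0\le d_{S_1}^p(x)\le \|x-\ox\|^p=o(\|x-\ox\|)$ for $p>1$, so $d_{S_1}^p$ is Fr\'echet differentiable at $\ox$ with gradient $0$. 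Part $(ii)$ is as in the paper.

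The genuine gap is in $(iii)$: as written, your argument only yields the inclusion $\mathcal{A}(\alpha_1,\alpha_2,p)\subset\{\text{fixed points}\}$. You invoke $(i)$--$(ii)$ to pass from $0\in\partial\varphi(x)$ to the gradient equation, but $(i)$--$(ii)$ are statements about points already in $\mathcal{A}(\alpha_1,\alpha_2,p)$; for the converse you must first show that any fixed point $x$ of the averaged projection lies outside $S_1\cup S_2$. The paper does this explicitly: if $x\in S_1$ then $P_1(x)=x$, and the fixed-point identity forces $x=P_2(x)\in S_2$, contradicting $S_1\cap S_2=\emptyset$. Only once $x\notin S_1\cup S_2$ is the gradient formula from $(ii)$ available at $x$, after which the algebra can be run backwards to obtain $\nabla\varphi(x)=0$. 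A minor remark on the forward direction: the relation $\alpha_1\|x-P_1(x)\|^{p-1}=\alpha_2\|x-P_2(x)\|^{p-1}$ that you extract via collinearity follows more directly, as the paper observes, by taking norms on both sides of the balance equation.
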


\begin{proof}
$\left( i\right) $ It will suffice to prove that $\mathcal{A}(\alpha
_{1},\alpha _{2},p)\cap S_{1}=\emptyset .$ Let $x\in S_{1}$ and pick $%
\lambda >0$ such that $\frac{\lambda ^{p}}{1-(1-\lambda )^{p}}<\frac{\alpha
_{2}}{\alpha _{1}}$ (this is possible, since $\lim_{\lambda \rightarrow
0^{+}}\frac{\lambda ^{p}}{1-(1-\lambda )^{p}}=0$). Since%
\begin{equation*}
d\left( (1-\lambda )x+\lambda P_{2}(x),S_{1}\right) \leq d\left( (1-\lambda
)x+\lambda P_{2}(x),x\right) =\lambda \left\Vert P_{2}(x)-x\right\Vert
\end{equation*}%
and%
\begin{equation*}
d\left( (1-\lambda )x+\lambda P_{2}(x),S_{2}\right) \leq d\left( (1-\lambda
)x+\lambda P_{2}(x),P_{2}(x)\right) =(1-\lambda )\left\Vert
x-P_{2}(x)\right\Vert ,
\end{equation*}%
we have%
\begin{eqnarray*}
&&%
\begin{array}{c}
\alpha _{1}d\left( (1-\lambda )x+\lambda P_{2}(x),S_{1}\right) ^{p}+\alpha
_{2}d\left( (1-\lambda )x+\lambda P_{2}(x),S_{2}\right) ^{p}%
\end{array}
\\
&&%
\begin{array}{c}
\qquad \leq \alpha _{1}\lambda ^{p}\left\Vert P_{2}(x)-x\right\Vert
^{p}+\alpha _{2}\left( 1-\lambda \right) ^{p}\left\Vert
x-P_{2}(x)\right\Vert ^{p}%
\end{array}
\\
&&%
\begin{array}{c}
\qquad =\left( \alpha _{1}\lambda ^{p}+\alpha _{2}\left( 1-\lambda \right)
^{p}\right) \left\Vert x-P_{2}(x)\right\Vert ^{p}<\alpha _{2}\left\Vert
x-P_{2}(x)\right\Vert ^{p}%
\end{array}
\\
&&%
\begin{array}{c}
\qquad =\alpha _{1}d\left( x,S_{1}\right) ^{p}+\alpha _{2}d\left(
(x,S_{2}\right) ^{p},%
\end{array}%
\end{eqnarray*}%
which shows that $x\notin \mathcal{A}(\alpha _{1},\alpha _{2},p),$ thus
proving that $\mathcal{A}(\alpha _{1},\alpha _{2},p)$ and $S_{1}$ are
disjoint.\bigskip

$\left( ii\right) $\textbf{\ }is a consequence of $\left( i\right) $ taking (%
\ref{grad}) into account.

$\left( iii\right) $ For simplicity of notation, for $x\in \mathcal{A}%
(\alpha _{1},\alpha _{2},p)$ and $i=1,2$ we will denote 
\begin{equation}
D_{i}^{p}(x):=\alpha _{i}\nabla d_{S_{i}}^{p}\left( x\right) .
\label{grad pow}
\end{equation}%
Let $x\in \mathcal{A}(\alpha _{1},\alpha _{2},p).$ The equality $%
D_{1}^{p}(x)+D_{2}^{p}(x)=0$ yields%
\begin{equation}
\alpha _{1}\left\Vert x-P_{1}(x)\right\Vert ^{p-2}\left( x-P_{1}(x)\right)
+\alpha _{2}\left\Vert x-P_{2}(x)\right\Vert ^{p-2}\left( x-P_{2}(x)\right)
=0,  \label{fp0}
\end{equation}%
from which we deduce that%
\begin{eqnarray*}
x &=&\frac{\alpha _{1}\left\Vert x-P_{1}(x)\right\Vert ^{p-2}}{\alpha
_{1}\left\Vert x-P_{1}(x)\right\Vert ^{p-2}+\alpha _{2}\left\Vert
x-P_{2}(x)\right\Vert ^{p-2}}P_{1}(x) \\
&&+\frac{\alpha _{2}\left\Vert x-P_{2}(x)\right\Vert ^{p-2}}{\alpha
_{1}\left\Vert x-P_{1}(x)\right\Vert ^{p-2}+\alpha _{2}\left\Vert
x-P_{2}(x)\right\Vert ^{p-2}}P_{2}(x) \\
&=&\frac{\alpha _{1}}{\alpha _{1}+\alpha _{2}\left( \frac{\left\Vert
x-P_{2}(\left( x\right) x)\right\Vert }{\left\Vert x-P_{1}(x)\right\Vert }%
\right) ^{p-2}}P_{1}(x)+\frac{\alpha _{2}}{\alpha _{1}\left( \frac{%
\left\Vert x-P_{1}(x)\right\Vert }{\left\Vert x-P_{2}(x)\right\Vert }\right)
^{p-2}+\alpha _{2}}P_{2}(x)
\end{eqnarray*}%
Since condition $D_{1}^{p}(x)+D_{2}^{p}(x)=0$ implies that $\left\Vert
D_{1}^{p}(x)\right\Vert =\left\Vert D_{2}^{p}(x)\right\Vert ,$ that is, $%
\alpha _{1}p\left\Vert x-P_{1}(x)\right\Vert ^{p-1}=\alpha _{2}p\left\Vert
x-P_{2}(x)\right\Vert ^{p-1},$ which is equivalent to the equality%
\begin{equation}
\frac{\alpha _{1}}{\alpha _{2}}=\left( \frac{\left\Vert
x-P_{2}(x)\right\Vert }{\left\Vert x-P_{1}(x)\right\Vert }\right) ^{p-1},
\label{quot}
\end{equation}%
we obtain%
\begin{eqnarray*}
x &=&\frac{\alpha _{1}}{\alpha _{1}+\alpha _{2}\left( \frac{\alpha _{1}}{%
\alpha _{2}}\right) ^{\frac{p-2}{p-1}}}P_{1}(x)+\frac{\alpha _{2}}{\alpha
_{1}\left( \frac{\alpha _{2}}{\alpha _{1}}\right) ^{\frac{p-2}{p-1}}+\alpha
_{2}}P_{2}(x) \\
&=&\frac{\alpha _{1}^{\frac{1}{p-1}}}{\alpha _{1}^{\frac{1}{p-1}}+\alpha
_{2}^{\frac{1}{p-1}}}P_{1}(x)+\frac{\alpha _{2}^{\frac{1}{p-1}}}{\alpha
_{1}^{\frac{1}{p-1}}+\alpha _{2}^{\frac{1}{p-1}}}P_{2}(x).
\end{eqnarray*}%
This shows that $x$ is a fixed point of $\frac{\alpha _{1}^{\frac{1}{p-1}}}{%
\alpha _{1}^{\frac{1}{p-1}}+\alpha _{2}^{\frac{1}{p-1}}}P_{1}+\frac{\alpha
_{2}^{\frac{1}{p-1}}}{\alpha _{1}^{\frac{1}{p-1}}+\alpha _{2}^{\frac{1}{p-1}}%
}P_{2}.$

Conversely, if $x\in X$ is a fixed point of $\frac{\alpha _{1}^{\frac{1}{p-1}%
}}{\alpha _{1}^{\frac{1}{p-1}}+\alpha _{2}^{\frac{1}{p-1}}}P_{1}+\frac{%
\alpha _{2}^{\frac{1}{p-1}}}{\alpha _{1}^{\frac{1}{p-1}}+\alpha _{2}^{\frac{1%
}{p-1}}}P_{2},$ then $x\notin S_{1}\cup S_{2}.$ Indeed, otherwise, if, say, $%
x\in S_{1},$ then, from the equalities%
\begin{equation}
x=\frac{\alpha _{1}^{\frac{1}{p-1}}}{\alpha _{1}^{\frac{1}{p-1}}+\alpha
_{2}^{\frac{1}{p-1}}}P_{1}\left( x\right) +\frac{\alpha _{2}^{\frac{1}{p-1}}%
}{\alpha _{1}^{\frac{1}{p-1}}+\alpha _{2}^{\frac{1}{p-1}}}P_{2}\left(
x\right)  \label{fp}
\end{equation}%
and $P_{1}\left( x\right) =x$ we would obtain $x=P_{2}\left( x\right) \in
S_{2},$ thus contradicting the assumption that $S_{1}\cap S_{2}=\emptyset .$
Therefore, the functions $d_{S_{i}},$ $i=1,2,$ are differentiable at $x.$
From (\ref{fp}), it follows that 
\begin{equation}
\alpha _{1}^{\frac{1}{p-1}}\left( x-P_{1}\left( x\right) \right) +\alpha
_{2}^{\frac{1}{p-1}}\left( x-P_{2}\left( x\right) \right) =0,  \label{fp2}
\end{equation}%
from which we deduce (\ref{quot}). Now, using (\ref{quot}), we can rewrite (%
\ref{fp2}) as (\ref{fp0})\textbf{\ }to obtain the equality $%
D_{1}^{p}(x)+D_{2}^{p}(x)=0,$ which shows that $x\in \mathcal{A}(\alpha
_{1},\alpha _{2},p).$
\end{proof}

\bigskip

Notice that the set $\mathcal{A}(\frac{1}{2},\frac{1}{2},p)$ does not depend
on $p,$ since, by Theorem \ref{A = fixed points}$\left( iii\right) $, it
coincides with the set of fixed points of $\frac{1}{2}\left(
P_{1}+P_{2}\right) .$ Also notice that\textbf{\ }$\mathcal{A}(\alpha
_{1},\alpha _{2},2)$ coincides with the set of fixed points of $\alpha
_{1}P_{1}+\alpha _{2}P_{2}.$

The following lemma provides the counterpart of Theorem \ref{A = fixed
points}$\left( ii\right) $ for the case $p\geq 2.$

\begin{lemma}
\label{Lem_dist_K}Take $p\geq 2,$ and let $\emptyset \neq S\subset X$ be a
closed convex set$.$ The function $d_{S}^{p}$ is differentiable in $X$ and
we have%
\begin{equation*}
\nabla d_{S}^{p}\left( x\right) =pd_{S}^{p-2}\left( x\right) \left(
x-P_{S}\left( x\right) \right) ,\text{ for }x\in X.
\end{equation*}
\end{lemma}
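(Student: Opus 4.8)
The plan is to reduce the statement to the differentiability of $d_S^2$, which is already at hand. Writing $p = 2q$ with $q := p/2 \geq 1$, I would express $d_S^p = \left(d_S^2\right)^q = \phi \circ d_S^2$, where $\phi(t) := t^q$ is viewed as a scalar function on $\left[0,+\infty\right[$ (the range of $d_S^2$). By Proposition \ref{Prop_dist_diff}$\left(i\right)$, $d_S^2$ is differentiable on all of $X$ with $\nabla d_S^2(x) = 2\left(x - P_S(x)\right)$, so the whole matter reduces to a chain-rule computation.

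The one-variable input is that $\phi(t) = t^q$ with $q \geq 1$ is differentiable on $\left[0,+\infty\right[$: for $t > 0$ one has $\phi'(t) = q t^{q-1}$, and the right derivative at $t = 0$ exists, equal to $0$ if $q > 1$ and to $1$ if $q = 1$ (as $\lim_{t\to 0^+} t^{q-1}$ is $0$ or $1$ accordingly). This is exactly where the hypothesis $p \geq 2$ enters, and it is the crux of the argument: were $p < 2$, we would have $q < 1$ and $t^{q-1} \to +\infty$ as $t \to 0^+$, so $\phi$ would not be differentiable at $0$ and the composite would fail to be differentiable on $S$, which is precisely where $d_S^2$ attains the value $0$.

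Finally I would run the chain rule at an arbitrary $x \in X$. The outer function $\phi$ is differentiable at $d_S^2(x) \in \left[0,+\infty\right[$ and the inner function $d_S^2$ is differentiable at $x$ and locally Lipschitz (since $d_S$ is $1$-Lipschitz, $\left|d_S^2(y) - d_S^2(x)\right| \leq \|y - x\|\left(d_S(y) + d_S(x)\right)$), so the composite is differentiable at $x$ with
\[
\nabla d_S^p(x) = \phi'\!\left(d_S^2(x)\right)\nabla d_S^2(x) = q\left(d_S^2(x)\right)^{q-1}\cdot 2\left(x - P_S(x)\right) = p\, d_S^{p-2}(x)\left(x - P_S(x)\right),
\]
as claimed. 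The delicate case is $x \in \mathrm{bd}\,S$, where $d_S$ itself is not differentiable (its subdifferential there is $N_S(x) \cap B^{\ast}$, generally not a singleton, by Proposition \ref{Prop_dist_diff}$\left(ii\right)$); routing through $d_S^2$ sidesteps this, since there $d_S^2(x) = 0$, increments of $d_S^2$ stay $\geq 0$ so only the right derivative $\phi'(0^+)$ is needed, and both sides of the formula vanish because $P_S(x) = x$. On $\mathrm{int}\,S$ the claim is trivial, and on $X \setminus S$ the same formula follows directly from (\ref{grad}), providing a consistency check.
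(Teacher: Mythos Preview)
Your proof is correct and follows exactly the same route as the paper: write $d_S^p = \left(d_S^2\right)^{p/2}$ and invoke Proposition~\ref{Prop_distance2}$\left(i\right)$ for the differentiability of $d_S^2$. The paper's proof is a one-line sketch of precisely this idea, and you have simply filled in the chain-rule details and the boundary discussion that the paper leaves implicit.
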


\begin{proof}
Just write $d_{S}^{p}\left( x\right) $ as $\left( d_{S}^{2}\left( x\right)
\right) ^{p/2}$ and apply Proposition \ref{Prop_distance2} $\left( i\right) $%
.
\end{proof}

\bigskip

The fact that function $d_{S}^{p}$ is differentiable in the whole space $X$
enables us to tackle the case of a finite amount of subsets $%
S_{1},...,S_{m}, $ with $\cap _{i=1}^{m}S_{i}=\emptyset ,$ $m\in \mathbb{N}.$
For simplicity, we use the notation\textbf{\ } 
\begin{equation}
\mathcal{A}(\alpha ,p):=\arg \min \sum\limits_{i=1}^{m}\alpha
_{i}d_{S_{i}}^{p},  \label{eq_argmin_mSets}
\end{equation}%
where $\alpha :=\left( \alpha _{1},\alpha _{2},...,\alpha _{m}\right) ,$
with $\alpha _{i}>0,$ $i=1,...,m,$ and $\sum\limits_{i=1}^{m}\alpha _{i}=1$.
The following theorem gathers the announced application of Corollary \ref%
{grad const}$\left( ii\right) $.

\begin{theorem}
\label{Theorem_111}If $p>1$ and $m=2,$ or $p\geq 2,$ the displacement
mappings $I-P_{i},$ $i=1,...,m$ are constant on $\mathcal{A}(\alpha ,p).$
\end{theorem}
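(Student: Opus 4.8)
The plan is to cast the result into the framework of Proposition \ref{op const} and Corollary \ref{grad const}$(ii)$ by setting $f_i := \alpha_i d_{S_i}^p$, $i = 1,\ldots,m$. First I would observe that each $f_i$ is a proper, lsc, convex, finite-valued function on $X$: it is $\alpha_i>0$ times the composition of the nondecreasing convex map $t\mapsto t^p$ on $[0,+\infty[$ with the continuous convex function $d_{S_i}$, hence convex and continuous, so $\mathrm{dom}\,f_i = X$ for every $i$. Consequently the regularity condition (\ref{eq_regul_cond}) holds trivially (for any choice of $i_0$, its left-hand side is all of $X$), Proposition \ref{op const} applies, and the common domain $\mathcal{A} = \arg\min \sum_{i=1}^m \alpha_i d_{S_i}^p$ therein coincides with the set $\mathcal{A}(\alpha,p)$ of (\ref{eq_argmin_mSets}).

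Next I would secure the differentiability of each $f_i$ on $\mathcal{A}$, separating the two regimes of the hypothesis. When $p\geq 2$, Lemma \ref{Lem_dist_K} gives differentiability of $d_{S_i}^p$ on all of $X$, and hence on $\mathcal{A}$, with $\nabla d_{S_i}^p(x) = p\,d_{S_i}^{p-2}(x)\,(x-P_i(x))$. When $m=2$ and $p>1$, the same formula holds on $\mathcal{A}$ by Theorem \ref{A = fixed points}$(ii)$. In either case $f_i$ is differentiable on $\mathcal{A}$ with
\[
\nabla f_i(x) = \alpha_i p\, d_{S_i}^{p-2}(x)\,(x-P_i(x)), \qquad x\in\mathcal{A}.
\]
Fixing an arbitrary index $i$ and applying Corollary \ref{grad const}$(ii)$ with $j_0=i$, I conclude that $\nabla f_i$ is constant on $\mathcal{A}$.

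The last and most delicate step is to disentangle this constant gradient so as to recover the constancy of $I-P_i$ itself, since $\nabla f_i$ carries the extra scalar factor $d_{S_i}^{p-2}(x)$. I would take norms: as $\|x-P_i(x)\| = d_{S_i}(x)$, one has $\|\nabla f_i(x)\| = \alpha_i p\, d_{S_i}^{p-1}(x)$, which is therefore constant on $\mathcal{A}$; since $p-1>0$, this forces $d_{S_i}$ to take a constant value $\delta_i\geq 0$ on $\mathcal{A}$. If $\delta_i = 0$, then $x\in S_i$ and $P_i(x)=x$ for every $x\in\mathcal{A}$, so $(I-P_i)(x)=0$ is constant. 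If $\delta_i>0$, then $d_{S_i}^{p-2}(x)=\delta_i^{p-2}$ is a nonzero constant and
\[
(I-P_i)(x) = x-P_i(x) = \frac{1}{\alpha_i p\, \delta_i^{p-2}}\,\nabla f_i(x)
\]
is a fixed scalar multiple of the constant vector $\nabla f_i(x)$, hence constant. As $i$ was arbitrary, the claim follows.

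I expect this final norm-and-rescale argument to be the main obstacle, because one must rule out division by zero and handle the degenerate branch cleanly. I would note that for $1<p<2$, which can occur only in the case $m=2$, Theorem \ref{A = fixed points}$(i)$ guarantees $\mathcal{A}\cap S_i = \emptyset$, so $\delta_i>0$ automatically and the factor $d_{S_i}^{p-2}(x)$ is well defined and nonzero throughout; for $p\geq 2$ the factor is bounded at the boundary $x\in S_i$ and the $\delta_i=0$ branch covers that situation directly.
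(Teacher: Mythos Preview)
Your argument is correct and follows essentially the same route as the paper: invoke Theorem~\ref{A = fixed points}$(ii)$ or Lemma~\ref{Lem_dist_K}$ $ to get differentiability of $d_{S_i}^{p}$ on $\mathcal{A}(\alpha,p)$, apply Corollary~\ref{grad const}$(ii)$ to obtain constancy of the gradient, then take norms to deduce that $d_{S_i}$ is constant and recover $I-P_i$. Your treatment is in fact slightly more explicit than the paper's in separating the cases $\delta_i=0$ and $\delta_i>0$ and in noting that Theorem~\ref{A = fixed points}$(i)$ rules out the former when $1<p<2$, whereas the paper passes over these points tacitly.
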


\begin{proof}
From Theorem\textbf{\ }\ref{A = fixed points}$\left( ii\right) $ and Lemma %
\ref{Lem_dist_K} if any of the current cases occurs we have that $%
d_{S_{i}}^{p}$ is differentiable on $\mathcal{A}(\alpha ,p),$ for each $%
i=1,...,m$. Hence, by Corollary \ref{grad const}$\left( ii\right) $, 
\begin{equation}
\nabla d_{S_{i}}^{p}\left( x\right) =p\left\Vert x-P_{i}(x)\right\Vert
^{p-2}\left( x-P_{i}\left( x\right) \right)  \label{eq_000}
\end{equation}%
is constant on $\mathcal{A}(\alpha ,p),$ $i=1,...,m$\textbf{\ }(again, $%
P_{i}:=P_{S_{i}},$ $i=1,2,...,m).$ So,%
\begin{equation*}
\left\Vert \nabla d_{S_{i}}^{p}(x)\right\Vert =p\left\Vert
x-P_{i}(x)\right\Vert ^{p-1}
\end{equation*}%
is constant on $\mathcal{A}(\alpha ,p),$ too,\textbf{\ }and hence so is $%
\left\Vert x-P_{i}(x)\right\Vert .$ Therefore, from (\ref{eq_000}), we
conclude that $I-P_{i}$ is constant on $\mathcal{A}(\alpha ,p),$ $i=1,...,m.$
\end{proof}

As a consequence of the previous theorem, taking Proposition \ref{equiv}
into account, we derive the following corollary. Roughly speaking, under the
current assumptions, the corollary says that the smallest translations of
the sets $S_{i}$ that achieve a nonempty intersection are unique.

\begin{corollary}
If $p>1$ and $m=2,$ or $p\geq 2,$ problem (\ref{eq_problem_u_in_Xm}) has a
unique optimal solution, provided that problem (\ref{eq_min_dist_k_weighted}%
) is solvable.\textbf{\ }
\end{corollary}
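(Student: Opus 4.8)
The plan is to combine the two structural results already established: the equivalence of Proposition~\ref{equiv} and the constancy of the displacement mappings from Theorem~\ref{Theorem_111}. The key observation is that Proposition~\ref{equiv} provides a complete description of the optimal solutions of (\ref{eq_problem_u_in_Xm}): a point $\overline{u}=(\overline{u}_1,\ldots,\overline{u}_m)$ is optimal for (\ref{eq_problem_u_in_Xm}) if and only if $\overline{u}_i=\overline{x}-P_i(\overline{x})=(I-P_i)(\overline{x})$ for some optimal solution $\overline{x}$ of (\ref{eq_min_dist_k_weighted}), that is, for some $\overline{x}\in\mathcal{A}(\alpha,p)$.

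First I would use the solvability hypothesis to guarantee $\mathcal{A}(\alpha,p)\neq\emptyset$, so that at least one optimal $\overline{x}$ exists; by the ``if'' direction of Proposition~\ref{equiv}, the associated $\overline{u}$ with $\overline{u}_i=(I-P_i)(\overline{x})$ is then optimal for (\ref{eq_problem_u_in_Xm}), which settles existence. For uniqueness I would take two arbitrary optimal solutions $\overline{u}$ and $\overline{u}'$ of (\ref{eq_problem_u_in_Xm}). By the ``only if'' direction of Proposition~\ref{equiv}, there exist optimal solutions $\overline{x},\overline{x}'\in\mathcal{A}(\alpha,p)$ with $\overline{u}_i=(I-P_i)(\overline{x})$ and $\overline{u}'_i=(I-P_i)(\overline{x}')$ for each $i=1,\ldots,m$.

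The decisive step is then to invoke Theorem~\ref{Theorem_111}: under the standing hypotheses ($p>1$ with $m=2$, or $p\geq 2$), each displacement mapping $I-P_i$ is constant on $\mathcal{A}(\alpha,p)$. Since both $\overline{x}$ and $\overline{x}'$ lie in $\mathcal{A}(\alpha,p)$, this forces $(I-P_i)(\overline{x})=(I-P_i)(\overline{x}')$, hence $\overline{u}_i=\overline{u}'_i$ for every $i$, and therefore $\overline{u}=\overline{u}'$. This proves that the optimal solution of (\ref{eq_problem_u_in_Xm}) is unique.

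I do not anticipate any genuine obstacle, as the corollary is essentially a packaging of the preceding results; the only point requiring care is that Proposition~\ref{equiv} must be used in \emph{both} directions---one to transfer existence from (\ref{eq_min_dist_k_weighted}) to (\ref{eq_problem_u_in_Xm}), and the other to force every optimal $\overline{u}$ to arise from some $\overline{x}\in\mathcal{A}(\alpha,p)$---so that the constancy of $I-P_i$ on $\mathcal{A}(\alpha,p)$ can be applied simultaneously to all optimal solutions.
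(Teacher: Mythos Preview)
Your proposal is correct and follows exactly the approach intended in the paper: the corollary is stated as an immediate consequence of Theorem~\ref{Theorem_111} together with Proposition~\ref{equiv}, and you have spelled out precisely how those two ingredients combine. The paper gives no further details beyond that sentence, so your argument is a faithful expansion of the omitted proof.
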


\section{Distance to feasibility}

This section is focused on the distance to feasibility for convex inequality
systems in $\mathbb{R}^{n}$ under RHS perturbations. In this framework,
lower and upper estimates for such a distance are provided in terms of some
elements whose existence is guaranteed from Corollary \ref{Corollary_diff}.
Both estimates coincide when confined to linear systems.

Let us consider the parameterized system$,$ 
\begin{equation}
\sigma \left( b\right) :=\left\{ g_{i}(x)\leq b_{i},\;i=1,\ldots ,m\right\} ,
\label{eq_convex:system}
\end{equation}%
where $x\in \mathbb{R}^{n},$ $(b_{i})_{i=1,\ldots ,m}\equiv b\in \mathbb{R}%
^{m},$ and $g_{i}:\mathbb{R}^{n}\rightarrow \mathbb{R}$ is a convex
function, $i=1,2,...,m$. To start with, the space of variables, $\mathbb{R}%
^{n},$ is endowed with an arbitrary norm, $\left\Vert \cdot \right\Vert ,$
with dual norm $\left\Vert \cdot \right\Vert _{\ast }$ and the associated
distances denoted by $d$ and $d_{\ast },$ respectively. From Corollary \ref%
{cor_Argmin} on we consider $\mathbb{R}^{n}$ equipped with the Euclidean
norm, $\left\Vert \cdot \right\Vert _{2}$. The space of parameters, $\mathbb{%
R}^{m}$, is endowed with any $p$-norm, $\left\Vert \cdot \right\Vert _{p},$
provided that $p\geq 2,$ and the associated distance is denoted by $d_{p}.$
We denote by $\Theta _{c}$ the set of consistent parameters; i.e., 
\begin{equation*}
\Theta _{c}:=\left\{ b\in \mathbb{R}^{m}\mid \sigma \left( b\right) \text{
is consistent}\right\} .
\end{equation*}%
Throughout this section we consider a fixed $\overline{b}\in \mathbb{R}%
^{m}\setminus \Theta _{c}$ and our aim is to estimate 
\begin{equation*}
d_{p}\left( \overline{b},\Theta _{c}\right) =\inf \left\{ \left\Vert 
\overline{b}-b\right\Vert _{p}:b\in \text{ is consistent}\right\} ,
\end{equation*}%
called the distance from $\overline{b}$ to feasibility.

\begin{proposition}
\label{Prop_distance_slack}Let $\overline{b}\in \mathbb{R}^{m}\setminus
\Theta _{c},$ then 
\begin{equation*}
d_{p}\left( \overline{b},\Theta _{c}\right) ^{p}=\inf_{x\in \mathbb{R}%
^{n}}\sum\limits_{i=1}^{m}[g_{i}\left( x\right) -\overline{b}_{i}]_{+}^{p}.
\end{equation*}
\end{proposition}

\begin{proof}
To establish the inequality `$\leq $',\ take any $x\in \mathbb{R}^{n}$ and
define%
\begin{equation*}
b_{i}:=\overline{b}_{i}+[g_{i}\left( x\right) -\overline{b}_{i}]_{+},\text{ }%
i=1,...,m.
\end{equation*}%
One can easily check that $b=\left( b_{i}\right) _{i=1,...,m}\in \Theta _{c}$
and, hence, 
\begin{equation*}
d_{p}\left( \overline{b},\Theta _{c}\right) ^{p}\leq d_{p}\left( \overline{b}%
,b\right) ^{p}=\sum\limits_{i=1}^{m}[g_{i}\left( x\right) -\overline{b}%
_{i}]_{+}^{p}.
\end{equation*}%
Since $x\in \mathbb{R}^{n}$ has been arbitrarily chosen, then 
\begin{equation*}
d_{p}\left( \overline{b},\Theta _{c}\right) ^{p}\leq \inf_{x\in \mathbb{R}%
^{n}}\sum\limits_{i=1}^{m}[g_{i}\left( x\right) -\overline{b}_{i}]_{+}^{p}.
\end{equation*}

Let us prove the converse inequality. Take any $b\in \Theta _{c}$, i.e.,
such that, for some $\overline{x}\in \mathbb{R}^{n},$ $g_{i}\left( \overline{%
x}\right) \leq b_{i},$ $i=1,...,m;$ then, $g_{i}\left( \overline{x}\right) -%
\overline{b}_{i}\leq b_{i}-\overline{b}_{i},$ $i=1,...,m,$ and so 
\begin{equation*}
\lbrack g_{i}\left( \overline{x}\right) -\overline{b}_{i}]_{+}\leq \lbrack
b_{i}-\overline{b}_{i}]_{+}\leq \left\vert b_{i}-\overline{b}_{i}\right\vert
,\text{ }i=1,...,m.
\end{equation*}%
Hence%
\begin{equation*}
\inf_{x\in \mathbb{R}^{n}}\sum\limits_{i=1}^{m}[g_{i}\left( x\right) -%
\overline{b}_{i}]_{+}^{p}\leq \sum\limits_{i=1}^{m}[g_{i}\left( \overline{x}%
\right) -\overline{b}_{i}]_{+}^{p}\leq \left\Vert \overline{b}-b\right\Vert
_{p}^{p}.
\end{equation*}%
Since the previous inequality is held for all $b\in \Theta _{c}$, then $%
\inf_{x\in \mathbb{R}^{n}}\sum\limits_{i=1}^{m}[g_{i}\left( x\right) -%
\overline{b}_{i}]_{+}^{p}\leq d_{p}\left( \overline{b},\Theta _{c}\right)
^{p}.$\bigskip
\end{proof}

The well-known Ascoli formula establishes that the distance from a point $%
x\in \mathbb{R}^{n}$ to a half-space $H:=\left\{ x\in \mathbb{R}^{n}\mid
\left\langle a,x\right\rangle \leq b\right\} ,$ with $0_{n}\neq a\in \mathbb{%
R}^{n}$ and $b\in \mathbb{R},$ is given by 
\begin{equation}
d_{H}\left( x\right) =\dfrac{[\left\langle a,x\right\rangle -b]_{+}}{%
\left\Vert a\right\Vert _{\ast }}.  \label{eq_ascoli}
\end{equation}%
The following result is focused on the extension of (\ref{eq_ascoli}) to the
convex case, where a convex inequality of the form `$g(x)\leq b$' is
considered. In this context, the distance from $x\in \mathbb{R}^{n}$ to the
nonempty closed convex set $S:=\left\{ x\in \mathbb{R}^{n}\mid g(x)\leq
b\right\} ,$ denoted by $d_{S}\left( x\right) ,$ is lower and upper bounded
by quotients involving the residual $[g\left( x\right) -b]_{+}$ and the
minimum norm of some subgradients of $g.$ Regarding these quotients, we use
the convention $\frac{0}{0}:=0.$

\begin{proposition}
\label{Prop generalized Ascoli}Let $g:\mathbb{R}^{n}\rightarrow \mathbb{R}$
be a convex function and $b\in \mathbb{R}$ such that the corresponding
sublevel set, $S$, is nonempty. Then we have:

$\left( i\right) $ For any $x\in \mathbb{R}^{n},$%
\begin{equation*}
d_{S}\left( x\right) \geq \dfrac{\lbrack g\left( x\right) -b]_{+}}{d_{\ast
}\left( 0_{n},\partial g\left( x\right) \right) };
\end{equation*}

$\left( ii\right) $ Assume that there exists $\widehat{x}\in \mathbb{R}^{n}$
(called a Slater point) such that $g(\widehat{x})<b.$ Then, for any $x\in 
\mathbb{R}^{n},$ 
\begin{equation*}
d_{S}\left( x\right) \leq \dfrac{\lbrack g\left( x\right) -b]_{+}}{d_{\ast
}\left( 0_{n},\partial g\left( P_{S}\left( x\right) \right) \right) },
\end{equation*}%
where $P_{S}\left( x\right) $ is the metric projection set of $x$ on $S$
with respect to the norm $\left\Vert \cdot \right\Vert .$
\end{proposition}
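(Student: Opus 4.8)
My plan is to treat the two inequalities separately, and in each case to reduce the convex inequality $g(x)\le b$ to a \emph{supporting halfspace} at the appropriate point, so that the classical Ascoli formula~(\ref{eq_ascoli}) can be brought to bear. Throughout I would dispose first of the trivial case $x\in S$, where $[g(x)-b]_+=0$ and $d_S(x)=0$, so that both displayed inequalities hold by the convention $\tfrac{0}{0}:=0$. Thus I may assume $g(x)>b$, hence $x\notin S$, and in particular $\partial g(x)\neq\emptyset$ (a finite-valued convex function on $\mathbb{R}^n$ is continuous, so by Theorem~\ref{Theorem_properties_convexity}$(ii)$ its subdifferential is nonempty everywhere).

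For part~$(i)$ I would fix any subgradient $x^\ast\in\partial g(x)$ and any point $s\in S$. The subgradient inequality gives $g(s)-g(x)\ge\langle s-x,x^\ast\rangle$, and since $g(s)\le b<g(x)$ this yields $\langle s-x,x^\ast\rangle\le b-g(x)<0$, i.e.\ $\langle x-s,x^\ast\rangle\ge g(x)-b$. Bounding the left-hand side by $\|x-s\|\,\|x^\ast\|_\ast$ (definition of the dual norm) gives $\|x-s\|\ge\dfrac{g(x)-b}{\|x^\ast\|_\ast}$. Taking the infimum over $s\in S$ on the left and the infimum over $x^\ast\in\partial g(x)$ on the right (which maximizes the bound by shrinking the denominator) produces $d_S(x)\ge\dfrac{g(x)-b}{d_\ast(0_n,\partial g(x))}$, as required. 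The only subtlety here is to make sure the $\inf$ over $x^\ast$ and the residual $[g(x)-b]_+$ match up when $d_\ast(0_n,\partial g(x))=0$, which is again covered by the convention.

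For part~$(ii)$ the Slater point is what lets me go in the reverse direction. Write $\ox:=P_S(x)$; since $x\notin S$ and $S$ is closed convex, $\ox\in\bd S$, so $g(\ox)=b$ (here the Slater assumption guarantees that $S=\{g\le b\}$ has nonempty interior and that its boundary is exactly the level set $\{g=b\}$, ruling out the degenerate possibility $g(\ox)<b$). I would then pick $x^\ast\in\partial g(\ox)$ of minimal dual norm and consider the supporting halfspace $H:=\{z\mid\langle z-\ox,x^\ast\rangle\le0\}=\{z\mid\langle z,x^\ast\rangle\le\langle\ox,x^\ast\rangle\}$. Convexity gives $S\subset H$, hence $d_S(x)\le$ (wait---inclusion gives the \emph{reverse}) so in fact $S\subset H$ yields $d_H(x)\le d_S(x)$, which is the wrong direction; the key move is instead to use that $\ox$ realizes the distance, so $d_S(x)=\|x-\ox\|$, and to estimate $\|x-\ox\|$ from above. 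Concretely, from $x^\ast\in\partial g(\ox)$ the subgradient inequality reads $g(x)-g(\ox)\ge\langle x-\ox,x^\ast\rangle$, i.e.\ $g(x)-b\ge\langle x-\ox,x^\ast\rangle$; the reverse control I need is $\langle x-\ox,x^\ast\rangle\ge\|x-\ox\|\,\|x^\ast\|_\ast$, which comes from the optimality/projection geometry. The main obstacle, and the genuinely nontrivial step, is precisely this: showing that at the projection point the direction $x-\ox$ aligns with a minimal-norm subgradient so that the subgradient inequality is tight enough to reverse part~$(i)$. I expect to obtain this by combining the variational characterization of $\ox=P_S(x)$ (the outward normal $x-\ox$ lies in $N_S(\ox)$) with the fact that, under Slater, $N_S(\ox)=\cone\,\partial g(\ox)$ (the normal cone to a Slater-regular sublevel set is generated by the subdifferential of $g$); this identifies $x-\ox$ as a nonnegative multiple of some $x^\ast\in\partial g(\ox)$, for which the Cauchy--Schwarz/dual-norm inequality becomes an equality and delivers $d_S(x)=\|x-\ox\|\le\dfrac{g(x)-b}{\|x^\ast\|_\ast}\le\dfrac{[g(x)-b]_+}{d_\ast(0_n,\partial g(\ox))}$.
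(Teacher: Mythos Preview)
Your argument for part~$(i)$ is correct and more elementary than the paper's: the paper rewrites $\{g\le b\}$ via its Fenchel linearization $\{\langle u,\cdot\rangle\le g^\ast(u)+b,\ u\in\partial g(\mathbb{R}^n)\}$ and then invokes a distance formula from \cite{CGP08}, whereas you obtain the same bound directly from a single subgradient inequality and the definition of the dual norm.

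For part~$(ii)$ the paper simply cites \cite[Lemma~2(ii)]{CGP10}, so there is no detailed argument to compare against; your plan is the natural one, but as written it has a gap in the general-norm setting of Section~5. The two facts you rely on are (a)~that $x-\ox\in N_S(\ox)$ characterizes the projection, and (b)~that if $x-\ox=t\,x^\ast$ then the dual-norm inequality $\langle x-\ox,x^\ast\rangle\le\|x-\ox\|\,\|x^\ast\|_\ast$ becomes an equality. Both are specific to the Euclidean norm: in (a) what lies in $N_S(\ox)$ is in general a \emph{dual} vector, not the primal displacement $x-\ox$; and in (b) the identity $\langle x^\ast,x^\ast\rangle=\|x^\ast\|\,\|x^\ast\|_\ast$ fails once $\|\cdot\|\neq\|\cdot\|_2$. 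The repair keeps your strategy intact. From the optimality of $\ox$ for $\min_{y\in S}\|y-x\|$ one has $0\in\partial\|\cdot-x\|(\ox)+N_S(\ox)$, which yields some $v^\ast$ with $\|v^\ast\|_\ast=1$, $\langle x-\ox,v^\ast\rangle=\|x-\ox\|$, and $v^\ast\in N_S(\ox)=\mathbb{R}_+\,\partial g(\ox)$ (the last equality uses Slater together with $g(\ox)=b$). Writing $v^\ast=t\,u$ with $u\in\partial g(\ox)$ forces $t=\|u\|_\ast^{-1}$ and hence $\langle x-\ox,u\rangle=\|x-\ox\|\,\|u\|_\ast$; the subgradient inequality at~$\ox$ then gives $g(x)-b\ge d_S(x)\,\|u\|_\ast\ge d_S(x)\,d_\ast(0_n,\partial g(\ox))$, which is the desired estimate.
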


\begin{proof}
$\left( i\right) $ Inequality $g\left( x\right) \leq b$ turns out to be
equivalent (same solution set, $S$) to its standard linearization by means
of the Fenchel conjugate, $g^{\ast },$ (see, e.g., \cite[Formula (3)]{BCLP21}%
), namely system 
\begin{equation*}
\left\{ \left\langle u,x\right\rangle \leq g^{\ast }\left( u\right) +b,~u\in
\partial g\left( \mathbb{R}^{n}\right) \right\} .
\end{equation*}%
The distance $d_{S}\left( x\right) $ may be computed by means of \cite[Lemma
1]{CGP08}, yielding (with the convention $\frac{0}{0}:=0$)%
\begin{eqnarray*}
d_{S}\left( x\right) &=&\sup \left\{ \left. \frac{\left[ \left\langle
v,x\right\rangle -\alpha \right] _{+}}{\left\Vert v\right\Vert _{\ast }}%
\right\vert \left( v,\alpha \right) \in \mathrm{conv}\left\{ \left(
u,g^{\ast }\left( u\right) +b\right) ,~u\in \partial g\left( \mathbb{R}%
^{n}\right) \right\} \right\} \\
&\geq &\sup \left\{ \left. \frac{\left[ \left\langle u,x\right\rangle
-\left( g^{\ast }\left( u\right) +b\right) \right] _{+}}{\left\Vert
u\right\Vert _{\ast }}\right\vert u\in \partial g\left( \mathbb{R}%
^{n}\right) \right\} \\
&\geq &\sup \left\{ \left. \frac{\left[ g\left( x\right) -b\right] _{+}}{%
\left\Vert u\right\Vert _{\ast }}\right\vert u\in \partial g\left( x\right)
\right\} \\
&=&\frac{\left[ g\left( x\right) -b\right] _{+}}{\inf \left\{ \left\Vert
u\right\Vert _{\ast }\mid u\in \partial g\left( x\right) \right\} }=\dfrac{%
[g\left( x\right) -b]_{+}}{d_{\ast }\left( 0_{n},\partial g\left( x\right)
\right) },
\end{eqnarray*}%
where in the third step we have appealed to the fact that 
\begin{equation*}
g\left( x\right) =g^{\ast \ast }\left( x\right) =\left\langle
u,x\right\rangle -g^{\ast }\left( u\right) \Leftrightarrow u\in \partial
g\left( x\right) .
\end{equation*}

$\left( ii\right) $ It follows from \cite[Lemma 2$\left( ii\right) $]{CGP10}%
. Observe that for $x\in S\ $we apply the convention $\frac{0}{0}:=0,$
whereas for $x\notin S$ the existence of a Slater point entails that $%
P_{S}\left( x\right) $ is not a minimizer of $g$ (since $g\left( P_{S}\left(
x\right) \right) =0$), and then $d_{\ast }\left( 0_{n},\partial g\left(
P_{S}\left( x\right) \right) \right) >0.$
\end{proof}

\begin{remark}
\emph{In many cases it is not difficult to see that }%
\begin{equation*}
b\mapsto \delta \left( b\right) :=d_{\ast }\left( 0_{n},\partial g\left(
g^{-1}\left( b\right) \right) \right) 
\end{equation*}%
\emph{is a positive nondecreasing function on the interval }$\left] \inf_{%
\mathbb{R}^{n}}g,+\infty \right[ $\emph{\ (we are assuming the nontrivial
case when }$g$\emph{\ is not constant, hence not bounded above). Here }$%
\inf_{\mathbb{R}^{n}}g$\emph{\ could be }$-\infty $\emph{\ and }$\partial
g\left( g^{-1}\left( b\right) \right) =\bigcup_{g(y)=b}\partial g\left(
y\right) .$\emph{\ For instance, if }$g\left( x_{1},x_{2}\right)
=e^{x_{1}}+e^{x_{2}},$\emph{\ with the Euclidean norm in }$\mathbb{R}^{2},$ 
\emph{then }$\delta \left( b\right) =b/\sqrt{2}$\emph{\ for }$b>0.$\emph{\
Accordingly, item }$\left( ii\right) $ \emph{in the previous lemma entails }$%
d_{S}\left( x\right) \leq \lbrack g\left( x\right) -b]_{+}/\delta \left(
b\right) .$
\end{remark}

\begin{corollary}
\label{Cor_distance}Let $\overline{b}\in \mathbb{R}^{m}\setminus \Theta _{c}$
and assume that $S_{i}:=\left\{ x\in \mathbb{R}^{n}\mid g_{i}(x)\leq 
\overline{b}_{i}\right\} \neq \emptyset ,$ $i=1,...,m.$ Then, the following
statements hold:

$\left( i\right) $ Let $\emptyset \neq C\subset \mathbb{R}^{n}$ be a closed
convex set such that,\ for each $i\in \{1,...,m\}$, there exists an upper
bound $u_{i}\geq d_{\ast }\left( 0_{n},\partial g_{i}\left( x\right) \right) 
$ for all $x\in C.$ Then,%
\begin{equation}
d_{p}\left( \overline{b},\Theta _{c}\right) ^{p}\leq \inf_{x\in
C}\sum\limits_{i=1}^{m}\left( u_{i}\right) ^{p}d_{S_{i}}^{p}\left( x\right)
=\inf_{x\in \mathbb{R}^{n}}\sum\limits_{i=1}^{m}\left( u_{i}\right)
^{p}d_{S_{i}}^{p}\left( x\right) +I_{C}\left( x\right) ,  \label{eq_upper}
\end{equation}%
where $I_{C}$ is the indicator function of $C$; i.e., $I_{C}\left( x\right)
=0$ if $x\in C$ and $I_{C}\left( x\right) =+\infty $ if $x\in \mathbb{R}%
^{n}\setminus C.$

$\left( ii\right) $ Assume that for each $i\in \{1,...,m\}$ there exists a
lower bound $0<l_{i}\leq d_{\ast }\left( 0_{n},\partial g_{i}\left(
P_{S_{i}}\left( x\right) \right) \right) $ for all $x\in \mathbb{R}%
^{n}\setminus S_{i}$. Then, 
\begin{equation}
d_{p}\left( \overline{b},\Theta _{c}\right) ^{p}\geq \inf_{x\in \mathbb{R}%
^{n}}\sum\limits_{i=1}^{m}\left( l_{i}\right) ^{p}d_{S_{i}}^{p}\left(
x\right) .  \label{eq_lower}
\end{equation}
\end{corollary}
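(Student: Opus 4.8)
The plan is to derive both estimates by combining the exact formula of Proposition~\ref{Prop_distance_slack}, namely
\begin{equation*}
d_{p}\left( \overline{b},\Theta _{c}\right) ^{p}=\inf_{x\in \mathbb{R}
^{n}}\sum\limits_{i=1}^{m}[g_{i}\left( x\right) -\overline{b}_{i}]_{+}^{p},
\end{equation*}
with the pointwise bounds on $d_{S_{i}}(x)$ supplied by Proposition~\ref{Prop generalized Ascoli}. The key observation is that each residual $[g_{i}(x)-\overline{b}_{i}]_{+}$ can be rewritten in terms of the distance $d_{S_{i}}(x)$ multiplied by an appropriate quotient involving the dual norm of a subgradient. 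I would handle the two items separately, since they rely on the two opposite inequalities of Proposition~\ref{Prop generalized Ascoli}.

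For $\left( ii\right) $, I would start from Proposition~\ref{Prop generalized Ascoli}$\left( i\right) $, which gives
\begin{equation*}
d_{S_{i}}\left( x\right) \geq \dfrac{[g_{i}\left( x\right) -\overline{b}_{i}]_{+}}{d_{\ast }\left( 0_{n},\partial g_{i}\left( x\right) \right) }.
\end{equation*}
Rather than bounding $d_{\ast}(0_n,\partial g_i(x))$ directly, the cleaner route uses Proposition~\ref{Prop generalized Ascoli}$\left( ii\right) $, which bounds $d_{S_{i}}(x)$ above by the same residual divided by $d_{\ast}(0_n,\partial g_i(P_{S_i}(x)))$. Applying the hypothesis $l_{i}\leq d_{\ast}(0_n,\partial g_i(P_{S_i}(x)))$ for $x\notin S_i$ yields $[g_i(x)-\overline{b}_i]_+\geq l_i\, d_{S_i}(x)$; for $x\in S_i$ both sides vanish, so the inequality holds for all $x$. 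Raising to the $p$-th power (legitimate since both sides are nonnegative and $p\geq 2$), summing over $i$, and taking the infimum over $x\in\mathbb{R}^{n}$ on both sides gives exactly \eqref{eq_lower}.

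For $\left( i\right) $, the direction is symmetric but confined to $C$. From Proposition~\ref{Prop generalized Ascoli}$\left( i\right) $ and the upper bound $u_i\geq d_{\ast}(0_n,\partial g_i(x))$ valid for $x\in C$, I obtain $[g_i(x)-\overline{b}_i]_+\leq u_i\, d_{S_i}(x)$ for every $x\in C$. Summing the $p$-th powers over $i$ and using Proposition~\ref{Prop_distance_slack} then gives $d_p(\overline{b},\Theta_c)^p\leq \sum_i u_i^p d_{S_i}^p(x)$ for each $x\in C$; taking the infimum over $x\in C$ produces the first inequality in \eqref{eq_upper}, and the reformulation as an unconstrained infimum with the indicator $I_C$ is just the definition of $I_C$.

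The main obstacle is bookkeeping the convention $\tfrac{0}{0}:=0$ together with the existence of a Slater point implicitly required by Proposition~\ref{Prop generalized Ascoli}$\left( ii\right) $: I need $d_{\ast}(0_n,\partial g_i(P_{S_i}(x)))>0$ whenever $x\notin S_i$ so that the lower bound $l_i>0$ is meaningful and division is valid, and I must separately verify the degenerate case $x\in S_i$ where the residual is zero. Once these edge cases are dispatched, both halves reduce to monotonicity of $t\mapsto t^{p}$ on $[0,+\infty[$ and the order-preservation of infima, so no substantial computation remains.
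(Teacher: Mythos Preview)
Your proposal is correct and follows essentially the same route as the paper: both parts combine Proposition~\ref{Prop_distance_slack} with the appropriate half of Proposition~\ref{Prop generalized Ascoli} (part~$(i)$ for the upper estimate, part~$(ii)$ for the lower), together with the trivial inequality $\inf_{\mathbb{R}^{n}}\leq\inf_{C}$ in item~$(i)$. Your extra care about the Slater condition and the degenerate case $x\in S_{i}$ is fine but not strictly needed, since the hypothesis $0<l_{i}\leq d_{\ast}(0_{n},\partial g_{i}(P_{S_{i}}(x)))$ for $x\notin S_{i}$ already forces the denominator to be positive where it matters.
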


\begin{proof}
$\left( i\right) $ comes straightforwardly from Propositions \ref%
{Prop_distance_slack} and \ref{Prop generalized Ascoli} $\left( i\right) ,$
taking into account the obvious fact that $\inf_{x\in \mathbb{R}%
^{n}}\sum\limits_{i=1}^{m}[g_{i}\left( x\right) -\overline{b}%
_{i}]_{+}^{p}\leq \inf_{x\in C}\sum\limits_{i=1}^{m}[g_{i}\left( x\right) -%
\overline{b}_{i}]_{+}^{p}.$

$\left( ii\right) $ follows immediately from Propositions \ref%
{Prop_distance_slack} and \ref{Prop generalized Ascoli} $\left( ii\right) .$
\end{proof}

Provided that $C$, $u=\left( u_{i}\right) _{i=1,...,m},$ $l=\left(
l_{i}\right) _{i=1,...,m}$ satisfy the conditions of the previous corollary,
we consider the argmin sets coming from (\ref{eq_upper}) and (\ref{eq_lower}%
):%
\begin{align*}
\mathcal{A}\left( C,u\right) & :=\arg \min \sum\limits_{i=1}^{m}\left(
u_{i}\right) ^{p}d_{S_{i}}^{p}\left( x\right) +I_{C}\left( x\right) ,\bigskip
\\
\mathcal{A}\left( l\right) & :=\arg \min \sum\limits_{i=1}^{m}\left(
l_{i}\right) ^{p}d_{S_{i}}^{p}\left( x\right) .
\end{align*}%
Then we can state another corollary of Proposition \ref{Prop generalized
Ascoli}, appealing also to Corollary \ref{Corollary_diff}. Indeed, it brings
to light the advantages of appealing to $\mathcal{A}\left( C,u\right) $ and $%
\mathcal{A}\left( l\right) ,$ instead of working directly with $\arg \min
\sum\limits_{i=1}^{m}[g_{i}\left( x\right) -\overline{b}_{i}]_{+}^{p}.\,$The
key point is that, in the current case in which $p\geq 2,$\textbf{\ }each
function $d_{S_{i}}^{p}$ is differentiable in $\mathbb{R}^{n}$ (see Lemma %
\ref{Lem_dist_K}\textbf{))}, which allows to appeal to Corollary \ref%
{Corollary_diff}, while this is not the case of $[g_{i}\left( \cdot \right) -%
\overline{b}_{i}]_{+}^{p}.$

Hereafter in this section we consider that $\mathbb{R}^{n}$ is endowed with
the Euclidean norm $\left\Vert \cdot \right\Vert _{2}$ and $P_{S}\left(
x\right) $ will denote the unique projection point of $x.$on a closed convex
set $S.$

\begin{corollary}
\label{cor_Argmin}Keeping the previous notation, assume that $\mathcal{A}%
\left( C,u\right) $ and $\mathcal{A}\left( l\right) $ are nonempty. Then, we
have that:

$\left( i\right) $ $d_{S_{i}}$ is constant on both $\mathcal{A}\left(
C,u\right) $ and $\mathcal{A}\left( l\right) ,$ for each $i=1,...,m$;

$\left( ii\right) $ For each $i=1,...,m,$ let us denote by $d_{i}^{+}$ and $%
d_{i}^{-}$ the constant values of $u_{i}d_{S_{i}}\left( \cdot \right) $ and $%
l_{i}d_{S_{i}}\left( \cdot \right) $ on $\mathcal{A}\left( C,u\right) $ and $%
\mathcal{A}\left( l\right) ,$ respectively, and let $d^{+}=\left(
d_{i}^{+}\right) _{i=1,...,m}$ and $d^{-}=\left( d_{i}^{-}\right)
_{i=1,...,m}$. Then,%
\begin{equation*}
d_{p}\left( \overline{b},\Theta _{c}\right) \leq \left\Vert d^{+}\right\Vert
_{p}.
\end{equation*}%
If, in addition, for each $i=1,...,m$ there exists $\widehat{x}_{i}\in 
\mathbb{R}^{n}$ such that $g_{i}(\widehat{x}_{i})<\overline{b}_{i},$ then%
\begin{equation*}
d_{p}\left( \overline{b},\Theta _{c}\right) \geq \left\Vert d^{-}\right\Vert
_{p}.
\end{equation*}
\end{corollary}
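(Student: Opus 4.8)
The plan is to establish Corollary~\ref{cor_Argmin} by combining the estimates of Corollary~\ref{Cor_distance} with the constancy results for the displacement mappings developed in Section~4, applied now to the weighted distance functions appearing in $\mathcal{A}\left( C,u\right) $ and $\mathcal{A}\left( l\right) $.

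For part $\left( i\right) $, I would observe that each objective function being minimized is of the form $\sum_{i=1}^{m}f_{i}$ with $f_{i}=\left( u_{i}\right) ^{p}d_{S_{i}}^{p}$ (plus the indicator $I_{C}$ in the case of $\mathcal{A}\left( C,u\right) $) or $f_{i}=\left( l_{i}\right) ^{p}d_{S_{i}}^{p}$. Since we are now working in $\mathbb{R}^{n}$ with the Euclidean norm and $p\geq 2$, Lemma~\ref{Lem_dist_K} guarantees that each $d_{S_{i}}^{p}$ is differentiable on all of $\mathbb{R}^{n}$, and the same holds for the positive multiples $\left( u_{i}\right) ^{p}d_{S_{i}}^{p}$ and $\left( l_{i}\right) ^{p}d_{S_{i}}^{p}$. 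The regularity condition (\ref{eq_regul_cond}) needed to apply Corollary~\ref{grad const} is satisfied because all but (at most) one of these summands are finite-valued and continuous on $\mathbb{R}^{n}$ (the indicator $I_{C}$ playing the role of the single possibly-nondifferentiable term with $\mathrm{dom}I_{C}=C$, and $\mathrm{int}\,C\cap\bigcap_{i}\mathbb{R}^{n}=\mathrm{int}\,C$, which is nonempty provided $C$ has nonempty interior — otherwise one reorders so a differentiable term is $f_{i_{0}}$). Applying Corollary~\ref{grad const}$\left( ii\right) $, the gradient $\nabla d_{S_{i}}^{p}$ is constant on the argmin set; by Lemma~\ref{Lem_dist_K} this gradient equals $pd_{S_{i}}^{p-2}\left( x\right) \left( x-P_{S_{i}}\left( x\right) \right) $, whose norm is $pd_{S_{i}}^{p-1}\left( x\right) $, so $d_{S_{i}}$ itself is constant on the argmin set (mirroring the argument in the proof of Theorem~\ref{Theorem_111}).

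For part $\left( ii\right) $, the two inequalities follow by evaluating the estimates of Corollary~\ref{Cor_distance} at a point of the respective argmin set. For the upper bound, pick any $\overline{x}\in\mathcal{A}\left( C,u\right) $; then (\ref{eq_upper}) gives $d_{p}\left( \overline{b},\Theta _{c}\right) ^{p}\leq\sum_{i=1}^{m}\left( u_{i}\right) ^{p}d_{S_{i}}^{p}\left( \overline{x}\right) =\sum_{i=1}^{m}\left( d_{i}^{+}\right) ^{p}=\left\Vert d^{+}\right\Vert _{p}^{p}$, using that $u_{i}d_{S_{i}}\left( \cdot \right) $ takes the constant value $d_{i}^{+}$ there by part $\left( i\right) $. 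For the lower bound, the Slater-point hypothesis $g_{i}(\widehat{x}_{i})<\overline{b}_{i}$ is exactly what is needed to invoke Proposition~\ref{Prop generalized Ascoli}$\left( ii\right) $ (guaranteeing $d_{\ast }\left( 0_{n},\partial g_{i}\left( P_{S_{i}}\left( x\right) \right) \right) >0$), so a valid lower bound $l_{i}$ exists and (\ref{eq_lower}) applies; evaluating at any $\overline{x}\in\mathcal{A}\left( l\right) $ yields $d_{p}\left( \overline{b},\Theta _{c}\right) ^{p}\geq\sum_{i=1}^{m}\left( l_{i}\right) ^{p}d_{S_{i}}^{p}\left( \overline{x}\right) =\left\Vert d^{-}\right\Vert _{p}^{p}$.

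The main obstacle I anticipate is the careful verification of the regularity condition (\ref{eq_regul_cond}) for the functional involving $I_{C}$: one must ensure that the differentiable summands $\left( u_{i}\right) ^{p}d_{S_{i}}^{p}$ (which are everywhere continuous) can serve as the $m-1$ continuous functions, leaving $I_{C}$ as the single exceptional term $f_{i_{0}}$, and that $\mathrm{dom}\,I_{C}=C$ meets the intersection of the interiors of the domains of the others (all of which are $\mathbb{R}^{n}$), so the condition reduces to $C\neq\emptyset$. This is routine once stated correctly, but it is the place where one must be attentive to which summand is allowed to be nonsmooth. The remaining steps are direct substitutions and appeals to already-established results.
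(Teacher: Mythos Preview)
Your proposal is correct and follows essentially the same route as the paper: for $\left( i\right) $ both you and the paper exploit the differentiability of $d_{S_{i}}^{p}$ on $\mathbb{R}^{n}$ (Lemma~\ref{Lem_dist_K}, since $p\geq 2$), invoke Corollary~\ref{grad const}$\left( ii\right) $ to get constancy of $\nabla d_{S_{i}}^{p}$ on the argmin set, and then pass to $d_{S_{i}}$ via $\left\Vert \nabla d_{S_{i}}^{p}\left( x\right) \right\Vert =pd_{S_{i}}^{p-1}\left( x\right) $; for $\left( ii\right) $ both evaluate the bounds of Corollary~\ref{Cor_distance} at a point of the corresponding argmin. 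Your only slip is in the first parenthetical about the regularity condition, where you momentarily require $\mathrm{int}\,C\neq\emptyset$; as you correctly note later, taking $I_{C}$ as the exceptional summand $f_{i_{0}}$ reduces \eqref{eq_regul_cond} to $C\neq\emptyset$, so no interior assumption is needed.
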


\begin{proof}
$\left( i\right) $ Regarding $\mathcal{A}\left( l\right) ,$ the statement
coincides with the one of Theorem \ref{Theorem_111} (in the case when\textbf{%
\ }$p\geq 2)$ just replacing each $\alpha _{i}$ with $\left( l_{i}\right)
^{p}.\,$With respect to $\mathcal{A}\left( C,u\right) ,$ the statement comes
from an analogous argument to the one of that theorem, just by adding the
nondifferentiable mapping $I_{C}.$ For completeness, we include here a
sketch of the proof. Observe that all functions $x\mapsto \left(
u_{i}\right) ^{p}d_{S_{i}}^{p}\left( x\right) $ are convex and
differentiable in $\mathbb{R}^{n},$ and $x\mapsto I_{C}\left( x\right) $ is
a proper lower semicontinuous convex function from $\mathbb{R}^{n}$ to $%
\left] -\infty ,+\infty \right] .$ Hence, the regularity condition (\ref%
{eq_regul_cond}) is satisfied, yielding%
\begin{equation*}
\mathcal{A}\left( C,u\right) =\left\{ x\in \mathbb{R}^{n}\mid 0_{n}\in
\sum\limits_{i=1}^{m}\left( u_{i}\right) ^{p}\nabla d_{S_{i}}^{p}\left(
x\right) +\partial I_{C}\left( x\right) \right\} \left( \neq \emptyset
\right) .
\end{equation*}

From Corollary \ref{Corollary_diff}, for each $i=1,...,m,$ we have that $%
\nabla d_{S_{i}}^{p}$ is constant on $\mathcal{A}\left( C,u\right) ,$ hence $%
d_{S_{i}}$ is also constant on\textbf{\ }$\mathcal{A}\left( C,u\right) $
since taking norms we have 
\begin{equation*}
\left\Vert \nabla d_{S_{i}}^{p}\left( x\right) \right\Vert =\left\Vert
pd_{S_{i}}^{p-2}\left( x\right) \left( x-P_{i}\left( x\right) \right)
\right\Vert =pd_{S_{i}}^{p-1}\left( x\right) ,\text{ for each }x\in \mathcal{%
A}\left( C,u\right) ,
\end{equation*}%
where $P_{i}\left( x\right) $ denotes the projection of $x$ on $S_{i}$
(recall again Lemma \ref{Lem_dist_K}).\textbf{\ }

$\left( ii\right) $ follows immediately from $\left( i\right) $ and
Corollary \ref{Cor_distance}.
\end{proof}

\subsection{Linear systems}

This subsection is devoted to the linear case, i.e., where $%
g_{i}(x)=\left\langle a_{i},x\right\rangle ,$ for some $a_{i}\in \mathbb{R}%
^{n},$ $i=1,...,m$. In this particular case, obviously $\partial g_{i}\left(
x\right) =\left\{ a_{i}\right\} $ for all $x\in \mathbb{R}^{n},$ $i=1,...,m.$
Let us consider $\overline{b}$ such that 
\begin{equation}
\sigma \left( \overline{b}\right) =\left\{ \left\langle a_{i},x\right\rangle
\leq \overline{b}_{i},\text{ }i=1,...,m\right\}  \label{eq_linear_system}
\end{equation}%
is inconsistent and for each $i$ there exists $\widehat{x}_{i}\in \mathbb{R}%
^{n}$ such that $\left\langle a_{i},\widehat{x}_{i}\right\rangle <\overline{b%
}_{i}$ (observe that it is always held when $a_{i}\neq 0_{n}$ or $\overline{b%
}_{i}>0).$ According to the notation of Corollary \ref{Cor_distance}, we can
choose:%
\begin{equation*}
C=\mathbb{R}^{n},\,l_{i}=u_{i}=\left\Vert a_{i}\right\Vert _{\ast
},i=1,...,m.
\end{equation*}%
Hence $\mathcal{A}\left( C,u\right) =\mathcal{A}\left( l\right) ,$ and $%
d_{i}^{+}=d_{i}^{-}$ for all $i.$ Let us denote by $\mathcal{A}:\mathcal{=A}%
\left( C,u\right) $ and $\overline{d}:=\left( d_{i}^{+}\right) _{i=1,...,m}.$

The following corollary follows straightforwardly from Corollary \ref%
{cor_Argmin}.

\begin{corollary}
\label{Cor_argmin_slack}Under the current assumptions, we have%
\begin{equation*}
d_{p}\left( \overline{b},\Theta _{c}\right) =\left\Vert \overline{d}%
\right\Vert _{p},
\end{equation*}%
where $d_{i}^{+}=\left\Vert a_{i}\right\Vert _{\ast }d_{S_{i}}\left(
x\right) =[\left\langle a_{i},x\right\rangle -\overline{b}_{i}]_{+},$ for
all $x\in \mathcal{A}.$ Moreover $\sigma \left( \overline{b}+\overline{d}%
\right) $ is a consistent system nearest to $\sigma \left( \overline{b}%
\right) .$
\end{corollary}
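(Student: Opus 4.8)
The plan is to derive this corollary directly from Corollary \ref{cor_Argmin} by verifying that, in the linear case, the upper and lower estimates collapse into a single exact value. First I would confirm the admissibility of the stated choices. Since $g_i(x)=\left\langle a_i,x\right\rangle$, we have $\partial g_i(x)=\{a_i\}$ for every $x\in\mathbb{R}^n$, so $d_\ast\left(0_n,\partial g_i(x)\right)=\left\Vert a_i\right\Vert_\ast$ is both a valid (constant) upper bound $u_i$ over $C=\mathbb{R}^n$ and, using the Slater assumption to guarantee $a_i\neq 0_n$ (hence $\left\Vert a_i\right\Vert_\ast>0$), a valid lower bound $l_i$; this legitimizes taking $u_i=l_i=\left\Vert a_i\right\Vert_\ast$. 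With these identical weights, the two objective functions in the definitions of $\mathcal{A}(C,u)$ and $\mathcal{A}(l)$ coincide (the indicator $I_{\mathbb{R}^n}$ being identically zero), whence $\mathcal{A}(C,u)=\mathcal{A}(l)=:\mathcal{A}$ and consequently $d_i^{+}=d_i^{-}$ for every $i$.

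Next I would invoke Corollary \ref{cor_Argmin}: its part $(i)$ guarantees that each $d_{S_i}$ is constant on $\mathcal{A}$, so $d_i^{+}=u_i d_{S_i}(\cdot)=\left\Vert a_i\right\Vert_\ast d_{S_i}(x)$ is well defined and independent of $x\in\mathcal{A}$. Since both the upper estimate $d_p(\overline{b},\Theta_c)\le\left\Vert d^{+}\right\Vert_p$ and the lower estimate $d_p(\overline{b},\Theta_c)\ge\left\Vert d^{-}\right\Vert_p$ hold (the latter precisely under the Slater condition, which is assumed here), and since $d^{+}=d^{-}=\overline{d}$, the two inequalities sandwich the distance and yield the exact equality $d_p(\overline{b},\Theta_c)=\left\Vert\overline{d}\right\Vert_p$. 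For the remaining identity $d_i^{+}=[\left\langle a_i,x\right\rangle-\overline{b}_i]_{+}$, I would use the Ascoli formula \eqref{eq_ascoli}: for the half-space $S_i=\{x\mid\left\langle a_i,x\right\rangle\le\overline{b}_i\}$ it gives $d_{S_i}(x)=[\left\langle a_i,x\right\rangle-\overline{b}_i]_{+}/\left\Vert a_i\right\Vert_\ast$, so multiplying by $\left\Vert a_i\right\Vert_\ast$ produces the claimed residual expression.

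Finally, for the last sentence I would exhibit $\overline{b}+\overline{d}$ as a nearest consistent parameter. Fixing any $x\in\mathcal{A}$, the definition $\overline{d}_i=[\left\langle a_i,x\right\rangle-\overline{b}_i]_{+}$ gives $\overline{b}_i+\overline{d}_i=\max\{\overline{b}_i,\left\langle a_i,x\right\rangle\}\ge\left\langle a_i,x\right\rangle$, so this same $x$ satisfies every inequality of $\sigma(\overline{b}+\overline{d})$, proving $\overline{b}+\overline{d}\in\Theta_c$. Its distance to $\overline{b}$ is exactly $\left\Vert\overline{d}\right\Vert_p=d_p(\overline{b},\Theta_c)$, so it attains the infimum and is therefore a nearest consistent system.

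I do not anticipate a serious obstacle, as the work has essentially been front-loaded into Corollary \ref{cor_Argmin}; the only point requiring care is checking that the Slater hypothesis genuinely forces $a_i\neq 0_n$ (so that $l_i=\left\Vert a_i\right\Vert_\ast>0$ is a legitimate strictly positive lower bound, as Corollary \ref{Cor_distance}$(ii)$ demands) and that the projection $P_{S_i}(x)$ of a point outside the half-space $S_i$ is never a minimizer of the affine $g_i$ — which holds automatically here since affine nonconstant functions have no minimizers.
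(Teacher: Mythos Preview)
Your proposal is correct and matches the paper's approach exactly: the paper states that the corollary ``follows straightforwardly from Corollary \ref{cor_Argmin},'' and you have simply filled in the verification that the choices $C=\mathbb{R}^n$, $u_i=l_i=\left\Vert a_i\right\Vert_\ast$ collapse the upper and lower estimates, together with Ascoli's formula for the residual expression and the explicit witness $x\in\mathcal{A}$ for the consistency of $\sigma(\overline{b}+\overline{d})$. One small caveat on your final paragraph: the Slater hypothesis does \emph{not} force $a_i\neq 0_n$ (take $a_i=0_n$ with $\overline{b}_i>0$), but in that degenerate case $S_i=\mathbb{R}^n$, $d_{S_i}\equiv 0$, and $[\langle a_i,x\rangle-\overline{b}_i]_+=0$, so the $i$-th constraint is redundant and may simply be discarded before invoking Corollary \ref{Cor_distance}$(ii)$; the paper does not dwell on this either.
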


The next result is devoted to provide an operative expression for
determining $\overline{d}$ with the Euclidean norm in both the space of
variables and the space of parameters$.$ For simplicity all norms are
denoted by $\left\Vert \cdot \right\Vert ,$ $A$ represents the matrix whose
rows are $a_{i}^{\prime },$ $i=1,\ldots ,m,$ $A^{\prime }$ denotes its
transpose and, for any $y\in \mathbb{R}^{m},$ $\left[ y\right] _{+}$ denotes
positive part coordinate by coordinate; i.e.,%
\begin{equation*}
\left[ y\right] _{+}:=\left( \left[ y_{i}\right] _{+}\right) _{i=1,\ldots
,m}.
\end{equation*}

\begin{theorem}
\label{thm conditions} The following conditions are equivalent:

$\left( i\right) $ $\left( x^{0},h^{0}\right) \in \mathcal{A\times }\left\{ 
\overline{d}\right\} ;$

$\left( ii\right) $ $\left( x^{0},h^{0}\right) $ is a solution of the
system, in the variable $\left( x,h\right) ,$%
\begin{equation}
\left\{ 
\begin{array}{c}
\left[ Ax-\overline{b}\right] _{+}=h, \\ 
A^{\prime }h=0_{n}.%
\end{array}%
\right.  \label{eq_13}
\end{equation}

$\left( iii\right) $ $\left( x^{0},h^{0}\right) $ is an optimal solution of
the quadratic problem, in the variable $\left( x,h\right) ,$%
\begin{equation}
\begin{array}{ll}
\min & \left\langle h,h\right\rangle \\ 
s.t. & Ax\leq \overline{b}+h, \\ 
& h\geq 0_{m}.%
\end{array}
\label{eq_15}
\end{equation}
\end{theorem}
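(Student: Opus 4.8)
The plan is to reduce everything to the single smooth convex function $F(x):=\|[Ax-\overline{b}]_+\|^2$, whose argmin set is $\mathcal{A}$, and to read off both the system (\ref{eq_13}) and the quadratic program (\ref{eq_15}) from its first-order condition. First I would rewrite the objective defining $\mathcal{A}$: in the linear case the Ascoli formula (\ref{eq_ascoli}) gives $\|a_i\|\,d_{S_i}(x)=[\langle a_i,x\rangle-\overline{b}_i]_+$ for every $x\in\mathbb{R}^n$, so $\sum_{i=1}^m\|a_i\|^2 d_{S_i}^2(x)=\sum_{i=1}^m[\langle a_i,x\rangle-\overline{b}_i]_+^2=\|[Ax-\overline{b}]_+\|^2=F(x)$ and hence $\mathcal{A}=\arg\min F$. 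Since $t\mapsto[t]_+^2$ is convex and continuously differentiable with derivative $2[t]_+$ (equivalently, by Lemma \ref{Lem_dist_K} with $p=2$), the function $F$ is convex and differentiable on $\mathbb{R}^n$ with $\nabla F(x)=2\sum_{i=1}^m[\langle a_i,x\rangle-\overline{b}_i]_+a_i=2A^{\prime}[Ax-\overline{b}]_+$. By convexity, $x\in\mathcal{A}$ if and only if $\nabla F(x)=0_n$, i.e. $A^{\prime}[Ax-\overline{b}]_+=0_n$. I would also recall from Corollary \ref{Cor_argmin_slack} that $\overline{d}=[Ax-\overline{b}]_+$ for every $x\in\mathcal{A}$.

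Next I would prove $(i)\Leftrightarrow(ii)$. If $(i)$ holds, then $h^0=\overline{d}=[Ax^0-\overline{b}]_+$ yields the first equation of (\ref{eq_13}), while $x^0\in\mathcal{A}$ yields $A^{\prime}h^0=A^{\prime}[Ax^0-\overline{b}]_+=0_n$, the second. Conversely, a solution of (\ref{eq_13}) satisfies $h^0=[Ax^0-\overline{b}]_+$, whence $A^{\prime}[Ax^0-\overline{b}]_+=A^{\prime}h^0=0_n$; this is $\nabla F(x^0)=0_n$, so $x^0\in\mathcal{A}$ and then $h^0=\overline{d}$, that is, $(i)$.

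Finally I would establish $(i)\Leftrightarrow(iii)$ by eliminating $h$ from (\ref{eq_15}). Any feasible $(x,h)$ satisfies $h\geq Ax-\overline{b}$ and $h\geq 0_m$ coordinatewise, hence $h\geq[Ax-\overline{b}]_+\geq 0_m$ and therefore $\langle h,h\rangle\geq\|[Ax-\overline{b}]_+\|^2=F(x)$, with equality exactly when $h=[Ax-\overline{b}]_+$. Consequently the optimal value of (\ref{eq_15}) equals $\min_x F(x)$, and $(x^0,h^0)$ is optimal if and only if $x^0\in\arg\min F=\mathcal{A}$ and $h^0=[Ax^0-\overline{b}]_+=\overline{d}$, which is precisely $(i)$.

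I do not expect a genuine obstacle here: the whole argument rests on the reformulation $F(x)=\|[Ax-\overline{b}]_+\|^2$, after which (\ref{eq_13}) is just the stationarity condition $\nabla F=0_n$ paired with the defining identity $h=[Ax-\overline{b}]_+$, and (\ref{eq_15}) is the variable-splitting form of $\min F$. The only points requiring care are the elementary gradient computation for $[t]_+^2$ and the degenerate rows with $a_i=0_n$ (where $S_i=\mathbb{R}^n$ forces both sides of the Ascoli identity to vanish), both of which are routine.
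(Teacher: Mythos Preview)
Your proof is correct. The equivalence $(i)\Leftrightarrow(ii)$ is handled exactly as in the paper: both arguments amount to identifying the stationarity condition $\nabla F(x^0)=2A^{\prime}[Ax^0-\overline{b}]_+=0_n$ with membership in $\mathcal{A}$, and then invoking Corollary~\ref{Cor_argmin_slack} to pin down $h^0=\overline{d}$.

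The genuine difference lies in the treatment of $(iii)$. The paper proves $(ii)\Leftrightarrow(iii)$ via the KKT system for the quadratic program (\ref{eq_15}): it introduces multipliers $\lambda,\mu\in\mathbb{R}_+^m$, derives $A^{\prime}\lambda=0_n$, $2h^0=\lambda+\mu$, and then argues case by case (according to the sign of $a_i^{\prime}x^0-\overline{b}_i$) that $\mu=0_m$ and $h^0=[Ax^0-\overline{b}]_+$. Your route is more elementary: you eliminate $h$ directly by observing that the feasibility constraints force $h\geq[Ax-\overline{b}]_+\geq 0_m$ coordinatewise, so $\langle h,h\rangle\geq F(x)$ with equality exactly at $h=[Ax-\overline{b}]_+$, which reduces (\ref{eq_15}) to the unconstrained minimization of $F$. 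This avoids multipliers and the case analysis entirely, at the cost of not exhibiting the Lagrangian structure; the paper's KKT computation, on the other hand, makes the dual relationship $\lambda=2h^0$ explicit.
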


\begin{proof}
Let us see $\left( i\right) \Rightarrow \left( ii\right) .$ Let $\left(
x^{0},h^{0}\right) \in \mathcal{A\times }\left\{ \overline{d}\right\} ,$
i.e., $x^{0}\in \mathcal{A}$ and $h^{0}=\overline{d}.$ By Corollary\textbf{\ 
}\ref{Cor_argmin_slack}, $h_{i}^{0}(=d_{i}^{+})=\left[ a_{i}^{\prime }x^{0}-%
\overline{b}_{i}\right] _{+},$ for all $i.$ Moreover, the optimality
condition 
\begin{equation*}
x^{0}\in \mathcal{A}:\mathcal{=}\arg \min \sum\limits_{i=1}^{m}\left\Vert
a_{i}\right\Vert ^{2}d_{S_{i}}^{2}\left( x\right)
\end{equation*}%
is equivalent to 
\begin{equation}
0_{n}=\sum\limits_{i=1}^{m}\left\Vert a_{i}\right\Vert ^{2}\nabla
d_{S_{i}}^{2}\left( x^{0}\right) =2\sum\limits_{i=1}^{m}\left\Vert
a_{i}\right\Vert d_{S_{i}}\left( x^{0}\right) a_{i}=2\sum\limits_{i=1}^{m} 
\left[ a_{i}^{\prime }x^{0}-\overline{b}_{i}\right] _{+}a_{i};
\label{eq_001}
\end{equation}%
in other words%
\begin{equation*}
0_{n}=\sum\limits_{i=1}^{m}h_{i}^{0}a_{i}=A^{\prime }h^{0}.
\end{equation*}

So, $\left( x^{0},h^{0}\right) $ is a solution of system (\ref{eq_13}).

$\left( ii\right) \Rightarrow \left( i\right) $ Let $\left(
x^{0},h^{0}\right) $ be a solution of (\ref{eq_13}); i.e., $h^{0}=\left[
Ax^{0}-\overline{b}\right] _{+}$ and%
\begin{equation*}
0_{n}=\sum\limits_{i=1}^{m}h_{i}^{0}a_{i}=A^{\prime }h^{0}.
\end{equation*}%
Then, by repeating the previous argument of (\ref{eq_001}), we have 
\begin{equation*}
0_{n}=\sum\limits_{i=1}^{m}\left\Vert a_{i}\right\Vert ^{2}\nabla
d_{S_{i}}^{2}\left( x^{0}\right) ,
\end{equation*}%
which means that $x^{0}\in \mathcal{A}.$ Then, appealing again to Corollary %
\ref{Cor_argmin_slack}, we deduce $h^{0}=\overline{d}.$

Now, let us prove $\left( ii\right) \Leftrightarrow \left( iii\right) .$ By
the Karush-Kuhn-Tucker (KKT, in brief) conditions, $\left(
x^{0},h^{0}\right) $ is an optimal solution of (\ref{eq_15}) if and only if
there exist $\lambda ,\mu \in \mathbb{R}_{+}^{m}$ such that 
\begin{equation}
\left\{ 
\begin{array}{c}
-\dbinom{0_{n}}{2h^{0}}=\dbinom{A^{\prime }}{-I_{m}}\lambda +\dbinom{%
0_{n\times m}}{-I_{m}}\mu , \\ 
\left( Ax^{0}-\overline{b}-h^{0}\right) ^{\prime }\lambda =0,\text{ }-\left(
h^{0}\right) ^{\prime }\mu =0, \\ 
Ax^{0}-\overline{b}-h^{0}\leq 0_{m},~h^{0}\geq 0_{m}.%
\end{array}%
\right.  \label{eq_17}
\end{equation}%
So, $A^{\prime }\lambda =0_{n},$ and $h^{0}=\frac{\lambda +\mu }{2}.$
Moreover, $h_{i}^{0}\mu _{i}=0$ for all $i.$ Let us see that $\mu =0_{m}.$
If $h_{i}^{0}=0,$ then $\lambda _{i}+\mu _{i}=0,$ which entails $\lambda
_{i}=\mu _{i}=0,$ while, if $h_{i}^{0}>0,$ then $\mu _{i}=0.$ Therefore 
\begin{equation}
h^{0}=\frac{\lambda }{2}  \label{eq_16}
\end{equation}%
and, so, 
\begin{equation*}
A^{\prime }h^{0}=0_{n}.
\end{equation*}%
Let us see that $\left[ Ax^{0}-\overline{b}\right] _{+}=h^{0}.$ Observe that 
$\left( a_{i}^{\prime }x^{0}-\overline{b}_{i}-h_{i}^{0}\right) \lambda
_{i}=0 $ for all $i.$ If $a_{i}^{\prime }x^{0}-\overline{b}_{i}<0,$ then $%
a_{i}^{\prime }x^{0}-\overline{b}_{i}-h_{i}^{0}<0,$ thus we have $\lambda
_{i}=0$ and 
\begin{equation*}
h_{i}^{0}=\frac{\lambda _{i}}{2}=0.
\end{equation*}%
If $a_{i}^{\prime }x^{0}-\overline{b}_{i}>0,$ then $h_{i}^{0}>0$ and $%
\lambda _{i}>0,$ yielding $a_{i}^{\prime }x^{0}-\overline{b}%
_{i}-h_{i}^{0}=0. $ Finally, if $a_{i}^{\prime }x^{0}-\overline{b}_{i}=0$,
then $h_{i}^{0}\lambda _{i}=0$, and from (\ref{eq_16}) we have $h_{i}^{0}=0.$
So, 
\begin{equation*}
\left[ a_{i}^{\prime }x^{0}-\overline{b}_{i}\right] _{+}=h_{i}^{0},\text{
for all }i,
\end{equation*}%
and consequently $\left( x^{0},h^{0}\right) $ is a solution of (\ref{eq_13}).

Reciprocally, if $\left( x^{0},h^{0}\right) $ is a solution of (\ref{eq_13})
and we consider 
\begin{equation*}
\lambda =2h^{0}\text{ and }\mu =0_{m},
\end{equation*}%
it can be easily seen that $x^{0},h^{0},\lambda $ and $\mu $ satisfy the KKT
conditions (\ref{eq_17}), and then $\left( x^{0},h^{0}\right) $ is an
optimal solution for problem (\ref{eq_15}).
\end{proof}


\begin{thebibliography}{99}
\bibitem{BauschkeCombettes11} H.H. BAUSCHKE, P. L. COMBETTES, \emph{Convex
Analysis and Monotone Operator Theory in Hilbert Spaces}, Springer, New
York, 2011.

\bibitem{BauCoLu04} H.H. BAUSCHKE, P. L. COMBETTES, D.R. LUKE, \emph{Finding
best approximation pairs relative to closed convex sets in Hilbert spaces},
Journal of Approximation Theory 125 (2004), 178-192.

\bibitem{BausWangYao10} H.H. BAUSCHE, X. WANG, YAO, \emph{On
Borwein--Wiersma decompositions of monotone linear relations}, SIAM J.
Optim. 20 (2010), 2636--2652.

\bibitem{BauschXWangLYao14} H.H. BAUSCHKE, X. WANG, L. YAO, \emph{%
Rectangularity and paramonotonicity of maximally monotone operators},
Optimization, 63 (2014), 487-504.

\bibitem{BCLP21} G. BEER, M.J. C\'{A}NOVAS, M.A. L\'{O}PEZ, J. PARRA, \emph{%
Lipschitz modulus of linear and convex inequality systems with the Hausdorff
metric}, Math. Program. 189 (2021), pp. 75--98.

\bibitem{BorweinVanderwerff10} J.M. BORWEIN, J.D. VANDERWERFF, \emph{Convex
Functions}, Cambridge University Press, Cambridge, MA, 2010.

\bibitem{BI08} R. S. BURACHIK, A. N. IUSEM, \emph{Set-valued mappings and
enlargement of monotone operators,} Springer, New York, 2009.

\bibitem{BurkeFerris91} J.V. BURKE, M.C. FERRIS, \emph{Characterization of
solution sets of convex programs}, Oper. Res. Lett. 10 (1991), 57-60.

\bibitem{Bruck75} R.E. BRUCK, \emph{An iterative solution of a variational
inequality for certain monotone operators in Hilbert space}, Bull. AMS 81
(1975), 890--892.

\bibitem{CGP08} M.J. C\'{A}NOVAS, F.J. G\'{O}MEZ-SENET, J. PARRA, \emph{%
Regularity modulus of arbitrarily perturbed linear inequality systems}, J.
Math. Anal. Appl. 343 (2008), pp. 315-327.

\bibitem{CGP10} M.J. C\'{A}NOVAS, F.J. G\'{O}MEZ-SENET, J. PARRA, \emph{%
Linear regularity, equirregularity, and intersection mappings for convex
semi-infinite inequality systems}, Math. Program. Ser B, 123 (2010), 33-60.

\bibitem{CensorIusemZenios98} Y. CENSOR, A.N. IUSEM, S.A. ZENIOS, \emph{An
interior point method with Bregman functions for the variational inequality
problem with paramonotone operators}, Math. Program. (Series A) 81 (1998),
373--400.

\bibitem{holmes73} R.B. HOLMES, \emph{Smoothness of certain metric
projections on Hilbert space}, Trans. Amer. Math. Soc. 184 (1973), 87-100.

\bibitem{Iusem98} A.N. IUSEM, \emph{On some properties of paramonotone
operators}, J. Convex Anal. 5 (1998), 269-278.

\bibitem{Lucchetti06} R. LUCCHETTI, \emph{Convexity and Well-Posed Problems}%
, volume 22 of CMS Books in Mathematics. Springer-Verlag, New York, 2006.

\bibitem{MS05} J. E. MART\'{I}NEZ-LEGAZ, B. F. SVAITER, \emph{Monotone
operators representable by l.s.c. convex functions}, Set-Valued Anal. 13
(2005), 21--46

\bibitem{MN22} B.S. MORDUKHOVICH, N.M. NAM, \emph{Convex Analysis and
Beyond. Volume I. Basic theory}, Springer Series in Operations Research and
Financial Engineering. Springer-Verlag, Cham, 2022.

\bibitem{Phelps} R.R. PHELPS, \emph{Convex Functions}, Monotone Operators
and Differentiability, 2nd ed., Springer-Verlag, Berlin, 1993.

\bibitem{Rock66} R.T. ROCKAFELLAR, \emph{Extension of Fenchel' duality
theorem for convex functions, }Duke Math. J. 33 (1966), 81-89.

\bibitem{RocKWet09} R.T. ROCKAFELLAR, R.J.-B. WETS, \emph{Variational
Analysis,} 3rd ed., Springer-Verlag, Berlin, 2009.

\bibitem{S07} S. SIMONS, \emph{Positive sets and monotone sets}, J. Convex
Anal. 14 (2007) 297--317.

\bibitem{Simons08} S. SIMONS, \emph{From Hahn-Banach to Monotonicity},
Springer-Verlag, Berlin, 2008.

\bibitem{Zalinescu02} C. Z\u{A}LINESCU, \emph{Convex Analysis in General
Vector Spaces}, World Scientific Publishing, Singapore, 2002.
\end{thebibliography}
\end{document}